\title{Shared ancestry graphs and symbolic arboreal maps}
\author{Katharina T. Huber\thanks{School of Computing Sciences, University of East Anglia, Norwich, UK.}
\and Vincent Moulton\footnotemark[1]
\and Guillaume E. Scholz\thanks{Bioinformatics Group, Department of Computer Science and Interdisciplinary Center for Bioinformatics, Universit\"at
				Leipzig, Leipzig, Germany. \email{guillaume@bioinf.uni-leipzig.de}}}
\begin{document}

\maketitle
			
\begin{keywords}
Ptolemaic graphs, arboreal networks, symbolic maps, ultrametrics, cographs.
\end{keywords}

\begin{MSCcodes}
05C05, 05C20, 05C90
\end{MSCcodes}

\begin{abstract}
A {\em network} $N$ on a finite set $X$,  $|X|\geq 2$, is a connected directed acyclic graph with leaf set $X$ in which every root in $N$ has outdegree at least 2 and no vertex in $N$ has indegree and outdegree equal to 1; $N$ is {\em arboreal}  if the underlying unrooted, undirected graph of $N$ is a tree. Networks are of interest in evolutionary biology since they are used, for example, to represent the evolutionary history of a set $X$ of species whose ancestors have exchanged genes in the past. For $M$ some arbitrary set of symbols, $d:{X \choose 2} \to M \cup \{\odot\}$  is a {\em symbolic arboreal map} if there exists some arboreal network $N$ whose vertices with outdegree two or more are labelled by elements in $M$ and so that $d(\{x,y\})$, $\{x,y\} \in {X \choose 2}$, is equal to the label of the least common ancestor of $x$ and $y$ in $N$ if this exists and $\odot$ else. Important examples of symbolic arboreal maps include the symbolic ultrametrics, which arise in areas such as game theory, phylogenetics and cograph theory. In this paper we show that a map $d:{X \choose 2} \to M \cup \{\odot\}$ is a symbolic arboreal map if and only if $d$ satisfies certain 3- and 4-point conditions and the graph with vertex set $X$ and edge set consisting of those pairs $\{x,y\} \in {X \choose 2}$ with $d(\{x,y\}) \neq \odot$ is {\em Ptolemaic} (i.e. its shortest path distance satisfies Ptolemy's inequality). To do this, we introduce and prove a key theorem concerning the  {\em shared ancestry graph} for a network $N$ on $X$, where this is the graph with vertex set $X$ and edge set consisting of those $\{x,y\} \in {X \choose 2}$ such that $x$ and $y$ share a common ancestor in $N$. In particular, we show that for any connected graph $G$ with vertex set $X$ and edge clique cover $K$ in which there are no two distinct sets in $K$ with one a subset of the other, there is some network with $|K|$ roots and leaf set $X$ whose shared ancestry graph is $G$. 
\end{abstract}

\section{Introduction}

Given a finite set $X$, $|X|\geq 2$, an arbitrary non-empty set $M$ of {\em symbols}, and some element $\odot$ that is not in $M$, a
{\em symbolic map} is a function $d$ that maps the collection of 2-subsets of $X$, i.e. ${X \choose 2}$,
into the set $M^{\odot}= M \cup \{\odot\}$. For brevity,  
given a symbolic map $d$ we denote $d(\{x,y\})$, $\{x,y\} \in {X \choose 2}$, by $d(x,y)$. 
Important examples of such maps are the {\em symbolic ultrametrics}. These are maps $d:{X\choose 2}\to M$ 
for which there exists some rooted  $T$ with leaf set $X$ 
in which each internal vertex of $T$ is labelled by an element in $M$, 
and such that $d(x,y)$, $\{x,y\} \in {X \choose 2}$, 
is given by the element in $M$ that labels the least common ancestor of $x$ and $y$ in $T$
(see e.g. Figure~\ref{fig-new}(i)). 
Symbolic ultrametrics were introduced in a different guise by Gurvich in \cite{G84}, and subsequently rediscovered and
studied in \cite{BD98}. They are a generalization
of the well-known {\em ultrametrics} (see e.\,g.\,\cite{SS03}), 
and have close links with the theory of cographs (see e.\,g.\,\cite{HHHMSW13}).

\begin{figure}[h]
	\begin{center}
		\includegraphics[scale=0.8]{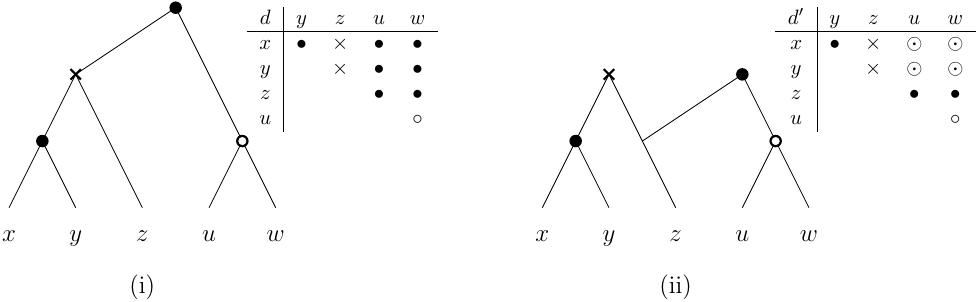}
	\end{center}
	\caption{For the set $M=\{\bullet,\circ,\times\}$, 
		(i) a phylogenetic tree with leaf set $X=\{x,y,z,u,w\}$, a labelling of its internal vertices by $M$,
		and the corresponding symbolic ultrametric $d$.
		(ii) An arboreal network with leaf set $X$, a labelling of its internal vertices having outdegree 2 by $M$,
		and the corresponding symbolic arboreal map $d'$.}
	\label{fig-new}
\end{figure}

Symbolic maps also  arise from more general structures than trees.
For example, maps arising from hypergraphs and di-cographs
are investigated in \cite{G00} and \cite{hellmuth2017mathematics}, respectively (see also e.g. \cite{geiss2020best}). 
In this paper, we are interested in understanding symbolic maps
that arise from a {\em network on $X$}, that is, 
a connected directed acyclic graph with leaf set $X$ 
in which every root in $N$ has outdegree at least 2 and no vertex in $N$ has indegree and outdegree equal to 1.
Networks arise, for example, in the study of the evolutionary history of 
species whose ancestors have exchanged genes in the past (see e.g. \cite{huber2022forest}),
and important examples include the well-studied {\em phylogenetic networks}, that is, networks that have a single root
(see e.g. \cite{S16} for a recent review).  Relatively little is known concerning properties 
of symbolic maps arising from networks; to our best knowledge they have only been 
directly considered in \cite{bruckmann2022modular} where 
symbolic maps arising from rooted median networks are introduced, and 
in \cite{HS18} where some results are presented for 3-way symbolic maps 
that arise from so-called level-1 networks.  

Here we shall consider symbolic maps that arise from
{\em arboreal networks}, that is, networks whose
underlying (undirected and unrooted) graph is a tree \cite{huber2022forest}.
An example of an arboreal network is pictured in Figure~\ref{fig-new}(ii); note
that such a network has a single root if and only if it is a rooted tree.
As with symbolic ultrametrics, symbolic maps arise naturally from  
arboreal networks by labelling each vertex in such a network
with outdegree at least 2 by an element in $M$.
In particular, a symbolic map $d$ is obtained from an arboreal network $N$ by defining $d(x,y)$, $\{x,y\}\in {X \choose 2}$, to be 
the element in $M$ that labels the least common ancestor of $x$ and $y$ in $N$ if
such a vertex exists, and $\odot$ otherwise (see e.g. Figure~\ref{fig-new}(ii)).

In this paper, we characterize {\em symbolic arboreal maps}, that is, 
symbolic maps that arise from arboreal networks.
Note that symbolic ultrametrics can be characterized amongst symbolic maps $d$ in terms of a 3- and 4-point condition
as follows \cite{BD98,G84}. The 3-point condition states that there are no 
$x,y,z \in X$ distinct such that $|\{d(x,y), d(x,z),d(y,z)\}| = 3$
and $\odot \not\in \{d(x,y), d(x,z),d(y,z)\}$, and the 4-point condition states that there are 
no four distinct elements $x,y,z,u$ in $X$ such that
$$
d(x,y)=d(y,z)=d(z,u) \neq d(y,u)=d(u,x)=d(x,z),
$$
and $\odot \not\in \{d(x,y), d(x,z)\}$\footnote{We have stated the 3- and 4-point conditions in slightly 
more general terms than in \cite{BD98,G84} as we need to consider 
the additional $\odot$ symbol which does not arise when considering only trees.}.
In our main result, Theorem~\ref{thm-diss}, we show that a symbolic map
is arboreal if and only if it satisfies these 3- and 4-point conditions, an
additional 4-point condition, and the graph $G_d$ with vertex set $X$ and
edges consisting of elements $\{x,y\}\in {X\choose 2}$, with $d(x,y)\not=\odot$ is {\em Ptolemaic}.
Note that a graph with vertex set $X$ is Ptolemaic if its shortest path distance $d^*$ satisfies
Ptolemy's inequality \cite{howorka1981characterization}, i.e. 
	$$
	d^*(x,y) \cdot d^*(z,u) + d^*(x,u) \cdot d^*(y,z) \ge d^*(x,z) \cdot d^*(y,u)
	$$
holds for all $x,y,z,u \in X$. In addition, we show that there is a 
special type of labelled arboreal network that can be used to uniquely represent 
any given symbolic arboreal map (see Theorem~\ref{cor-discr}).
	
The rest of this paper is organised as follows. In Section~\ref{sec-prem}, we collect 
together relevant basic definitions and terminology. In Section~\ref{sec-arb}, we then formally 
define arboreal networks and present some characterizations of such networks that 
will be useful later on. In Section~\ref{sec-sag}, we introduce the notion 
of the {\em shared ancestry graph} for a network, 
and show that  given any connected graph $G$ with vertex set $X$, we can construct a network $N$ with leaf set $X$
from any edge clique cover of $G$ that {\em represents} $G$, that is, whose shared ancestry graph is $G$ (Theorem~\ref{thm-unirep}).
In Section~\ref{sec-ptol}, we review some properties of Ptolemaic graphs, including a key
result concerning the laminar structure of Ptolemaic graphs from \cite{UU09},  and
show that the minimum size of a edge clique cover for such a connected graph
is equal to the number of maximal cliques in that graph with size at least 2 (Theorem~\ref{lm-prime}).
We then use these results in Section~\ref{sc-abrep} to characterize shared ancestry graphs of 
arboreal networks, showing that 
if $G$ is a connected graph with vertex set $X$ then there exists an arboreal network with 
leaf set $X$ that represents $G$ if and only if $G$ is Ptolemaic (Theorem~\ref{thm-wide}).
In Section~\ref{sc-sym}, we prove our 
aforementioned main result (Theorem~\ref{thm-diss}) by linking properties of the shared 
ancestry graph of an arboreal network whose associated
symbolic map is $d$ with the graph $G_d$ as defined above. We also state 
the uniqueness result, Theorem~\ref{cor-discr}, which we prove in the Appendix.
We conclude in Section~\ref{sec-disc} by presenting some potential directions for future work.

\section{Preliminaries}\label{sec-prem}

Throughout this paper, $X$ is a finite set with $|X|\ge 2$, and all graphs are simple, 
directed or undirected graphs. To simplify terminology, we 
usually refer to a directed graph as a {\em digraph} and to an undirected graph as a graph. 
	
	Let $N$ be a digraph with vertex set $V(N)$. Then we call
	the number of arcs coming into a vertex $v$ of $N$ the 
	{\em indegree} of $v$ and denote it by $indeg_N(v) = indeg(v)$. Similarly, 
	we call the number of outgoing arcs of a vertex $v$ the {\em outdegree}
    of $v$ and denote it by $outdeg_N(v) =outdeg(v)$. A
{\em leaf}  of $N$ is a vertex with indegree 1 and outdegree 0, and  
a \emph{root} is a vertex with indegree 0. We denote the set of leaves of $G$ by $L(G)$.
An \emph{internal vertex} (of $N$) is a vertex with outdegree 1 or more, and a \emph{tree-vertex} (of $N$) 
is a vertex with indegree 0 or 1. 
Note that if $N$ contains a vertex $v$ with indegree and outdegree $1$, by \emph{suppressing} 
$v$ we mean that we remove $v$ and its incident arcs and 
add a new arc from the parent of $v$ to the child of $v$.
A vertex $v$ of $N$ is said to be an \emph{ancestor} of a vertex $w$ in $N$ 
if there exists a directed path in $N$ from $v$ to $w$. In this case, we say 
	that $w$ is {\em below} $v$ and call $w$ a \emph{descendant} of $v$. If $v$ is an ancestor of $w$ and 
	$v\not=w$ then we call $v$ a {\em strict} ancestor of $w$ and $w$ a {\em strict} descendant of $v$. Note that a vertex is 
both an ancestor and a descendant of itself. 
If neither $v$ nor $w$ is an ancestor of the other, then we say that $v$ and $w$ are \emph{incomparable} (in $N$). 
Note that if two vertices of $N$ are incomparable then they must necessarily be distinct.
We say that two vertices $v,w\in V(N)$ \emph{share an ancestor} in $N$ if there 
exists a vertex $u$ (possibly equal to $v$ or $w$) such that $u$ is an ancestor of both $v$ and $w$.  
We say that $N$ is {\em connected} if the underlying graph of $N$ obtained by ignoring the 
directions of the arcs of $N$ is a connected graph.
 
A \emph{network (on $X$)} is a connected, acyclic digraph $N$ with leaf set $X$
such that all vertices of $N$ of indegree 0 have 
outdegree at least 2, all vertices of outdegree 0 have indegree 1, and no 
vertices have indegree and outdegree equal to 1. 
For $N$ a network, we denote by $R(N)$ the set of roots of $N$, and
let $r(N)=|R(N)|$. For simplicity, we shall sometimes call a network with $k\geq 1$ 
roots a {\em $k$-rooted network}. For $v$ a  vertex of $N$,
we let $C(v) \subseteq X$ 
denote the set of leaves of $N$ that have $v$ as an ancestor.
A {\em (single rooted) phylogenetic network (on $X$)} is a 
network on $X$ with one root (see e.g. \cite{S16}), and a {\em phylogenetic tree (on $X$)} is a 
phylogenetic network in which every vertex is a tree-vertex.

Vertices in a network $N$ that have indegree $2$ or more are called \emph{hybrid vertices}, and the set of 
hybrid vertices of $N$ is denoted by $H(N)$. 
We put $h(N)=|H(N)|$. Also, we put $\tilde{h}(N)=0$ if  $H(N)=\emptyset$ and, 
otherwise, we put
$\tilde h(N)=\sum_{h \in H(N)} ( indeg_N(h)-1 )$.
Note that $\tilde h(N)=h(N)$ if and only if all hybrid vertices of $N$ have indegree 2. 
If $r(N) \geq 2$, then for $r \in R(N)$, we denote by $N-r$ the digraph obtained from $N$ 
by first removing all vertices of $N$ and their incident arcs that are not a descendant of any vertex in $R(N)-\{r\}$ and 
then suppressing resulting vertices of indegree and outdegree $1$. 
Note that, in general, $N-r$ need 
not be a network as it might not be connected.

\begin{figure}[h]
		\begin{center}
			\includegraphics[scale=0.8]{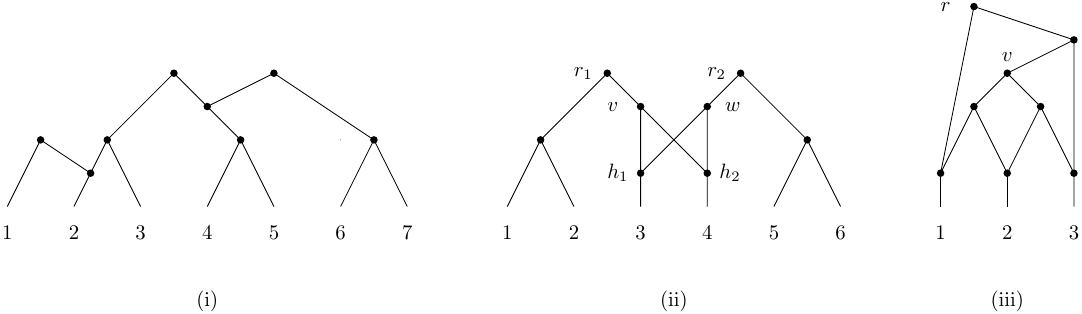}
		\end{center}
		\caption{(i) An arboreal network with $3$ roots on $\{1, \ldots, 7\}$. (ii) A 2-rooted 
			network on $\{1, \ldots, 6\}$ that is not arboreal as it contains 
			the $2$-alternating cycle $v,h_1,w,h_2$. (iii) A 1-rooted network (i.\,e.\,a
			phylogenetic network) on $\{1,2,3\}$ that contains 1-, 2- and 3-alternating cycles.}
		\label{fig-intro}
\end{figure}

\section{Characterizing arboreal networks}\label{sec-arb}

We call a network $N$  \emph{arboreal} if its underlying graph is a tree.
For example, the $3$-rooted network depicted in Figure~\ref{fig-intro}(i) is arboreal.
In this section, we give two characterizations of arboreal networks that will be
useful later on. We begin with a useful lemma.
	
\begin{lemma}\label{lm-rh1}
	Let $N$ be a network. Then $\tilde h(N) \geq r(N)-1$.
		In particular, $H(N)=\emptyset$ if and only if $N$ is a phylogenetic tree.
\end{lemma}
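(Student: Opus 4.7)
The plan is to prove the inequality by a double-counting argument on the arcs of $N$, and then to read off the characterization of $H(N)=\emptyset$ from the equality case. Denote by $A(N)$ the arc set of $N$. Every vertex $v$ contributes $indeg_N(v)$ to $|A(N)|$, and $V(N)$ partitions according to indegree as $V(N)=R(N)\sqcup H(N)\sqcup W$, where $W$ is the set of vertices of indegree exactly $1$ (comprising all leaves and all non-root tree-vertices). Summing indegrees then gives
\[
|A(N)|\;=\;0\cdot r(N)\;+\;|W|\;+\;\sum_{h\in H(N)} indeg_N(h)\;=\;|V(N)|-r(N)+\tilde h(N),
\]
where the last equality holds in both the case $H(N)=\emptyset$ (both sides reduce to $|V(N)|-r(N)$) and the case $H(N)\neq\emptyset$ (by the definition of $\tilde h(N)$).

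The second step is to use connectedness. Since $N$ is acyclic, no pair of distinct vertices can be joined by arcs in both directions, so the underlying undirected graph of $N$ is simple and has exactly $|A(N)|$ edges. Connectedness of that graph forces $|A(N)|\geq |V(N)|-1$, and substituting the identity above yields $\tilde h(N)\geq r(N)-1$, as required. Note that equality here corresponds precisely to the underlying graph being a tree.

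For the second assertion, suppose $H(N)=\emptyset$, so that $\tilde h(N)=0$. The bound gives $r(N)\leq 1$, and since every finite nonempty acyclic digraph has at least one root (obtained by following incoming arcs until the process must terminate), we conclude $r(N)=1$. Equality $\tilde h(N)=r(N)-1$ then forces the underlying graph to be a tree, so $N$ is a single-rooted network in which every vertex is a tree-vertex, i.e.\ a phylogenetic tree. Conversely, if $N$ is a phylogenetic tree then every vertex is a tree-vertex by definition, so $H(N)=\emptyset$. No step presents a real obstacle; the argument is essentially an Euler-type count of arcs by indegree combined with the tree inequality.
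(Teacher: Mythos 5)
Your proof is correct, but it takes a genuinely different route from the paper's. The paper argues by induction on $r(N)$: it first establishes a claim that some root $r$ can be removed so that $N-r$ is still a (connected) network, then observes that this removal decreases both $r(\cdot)$ and $\tilde h(\cdot)$ by at least the right amounts and invokes the induction hypothesis. You instead give a direct Euler-type count: summing indegrees over the partition of $V(N)$ into roots, hybrid vertices, and indegree-one vertices gives $|A(N)|=|V(N)|-r(N)+\tilde h(N)$, and connectedness of the (simple, by acyclicity) underlying graph gives $|A(N)|\geq |V(N)|-1$, whence the inequality. Your argument is shorter and avoids the somewhat delicate root-removal claim; as a bonus, your observation that equality holds precisely when the underlying graph is a tree is exactly the equivalence (i)$\Leftrightarrow$(ii) of the paper's Proposition~\ref{pr-arbalt}, which the paper proves separately by another induction plus a contradiction argument, so your counting identity does double duty. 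The paper's inductive route, on the other hand, sets up machinery (the existence of a removable root) that it reuses in the proof of Proposition~\ref{pr-arbalt}. Your handling of the second assertion is also fine, though slightly more than is needed: once $H(N)=\emptyset$ forces $r(N)=1$, the fact that every vertex is a tree-vertex already makes $N$ a phylogenetic tree by the paper's definition, without appealing to the equality case.
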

\begin{proof}
We show  that $\tilde h(N) \geq r(N)-1$ holds for all networks $N$ using induction on $r(N)$. 
Let $N$ be a network. Since $H(N)=\emptyset$ if and only if $N$ is a phylogenetic tree, the base case is
$r(N)=1$. If $r(N)=1$, then 
the inequality holds trivially since $\tilde{h}(N)=h(N)=0$.

Now, suppose that $r(N) \ge 2$.
We first claim that there must exist some $r \in R(N)$ such that $N-r$ is a network.
It suffices to prove that $N-r$ is connected.
Pick $r_1 \in R(N)$. If $N-r_1$ is connected, then  the claim follows as we can take $r$ to be $r_1$. Otherwise, we can pick some $r_2 \in R(N)-\{r_1\}$ such that $r_2$ is a vertex of a connected component $C_1$ of $N-r_1$ 
with the fewest number of roots amongst all 
connected components of $N-r_1$. If $N-r_2$ is connected, then the claim follows again as we can take $r$ to be $r_2$. Otherwise the correspondingly defined connected component $C_2$ has strictly 
fewer roots than $C_1$ and we can continue this process of picking a root with $r_1 $ replaced by $r_2$ and $r_2$ replaced by a root in $R(N)-\{r_1,r_2\}$. Since $R(N)$ is finite, this 
process of picking elements in $R(N)$ must eventually come to an end. This completes the proof of the claim.

Now, suppose that the inequality $\tilde h(N') \geq r(N')-1$
holds for all networks $N'$ with $r(N')<r(N)$. Consider 
a root $r$ of $N$ such that $N'=N-r$ is a 
network, which exists by the claim. Then $r(N')=r(N)-1$ and, because $N$ is connected, $\tilde h(N') < \tilde h(N)$. By our 
induction hypothesis, we have $\tilde h(N') \geq r(N')-1$, so $\tilde h(N) \geq r(N)-1$ follows.
\end{proof}

We now present two characterizations for arboreal networks, which  
we shall use later on without always explicitly referring them.
Let $N$ be a network. An \emph{$k$-alternating cycle} of $N$ is a 
sequence $v_1, h_1, v_2, \ldots v_k, h_k$, $k \geq 1$ of vertices of $N$ such that for all $1 \leq i \leq k$, $h_i$ 
is a hybrid vertex of $N$, and there exists internal vertex-disjoint directed paths from $v_i$ to $h_i$ and from $v_{i+1}$ to $h_i$, 
respectively (where we put $v_{k+1}=v_1$). 
For example, the sequence of vertices $v,h_1,w,h_2$ of the network depicted 
in Figure~\ref{fig-intro}(ii) is a 2-alternating cycle.  Note that $k$-alternating cycles are closely related to so 
called zig-zag paths introduced in \cite{Z16} and also to crowns as introduced in \cite{H21}.

\begin{proposition}
	\label{pr-arbalt}
Let $N$ be a network. Then the following statements are equivalent.
\begin{itemize}
	\item[(i)] $N$ is arboreal. 
	\item[(ii)] $\tilde h(N)=r(N)-1$.
	\item[(iii)] $N$ does not contain a $k$-alternating cycle for any $k \ge 1$.
\end{itemize}
\end{proposition}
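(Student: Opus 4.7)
My plan is to prove the two equivalences (i)$\Leftrightarrow$(ii) and (i)$\Leftrightarrow$(iii) separately, using an edge-counting argument for the first and a direct structural correspondence between undirected cycles and $k$-alternating cycles for the second.

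\textbf{Plan for (i)$\Leftrightarrow$(ii).} I would count the arcs of $N$ by summing indegrees: each root contributes $0$, each non-root non-hybrid vertex contributes exactly $1$, and each hybrid vertex $h$ contributes $indeg_N(h) = 1 + (indeg_N(h) - 1)$. Summing over all vertices gives a total of $|V(N)| - r(N) + \tilde h(N)$ arcs, which equals the number of edges in the underlying graph of $N$. Since $N$ is connected by definition, the underlying graph is a tree if and only if it has exactly $|V(N)| - 1$ edges, i.e., if and only if $\tilde h(N) = r(N) - 1$. The inequality $\tilde h(N) \geq r(N) - 1$ furnished by Lemma~\ref{lm-rh1} is consistent with this, reflecting that the underlying graph has at least $|V(N)|-1$ edges.

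\textbf{Plan for (i)$\Leftrightarrow$(iii).} I would show the equivalent statement that the underlying graph of $N$ contains a cycle if and only if $N$ contains a $k$-alternating cycle for some $k \geq 1$. The easy direction: given a $k$-alternating cycle $v_1, h_1, \ldots, v_k, h_k$, concatenating the directed paths from $v_i$ to $h_i$ with the reversed directed paths from $v_{i+1}$ to $h_i$ produces a non-trivial closed walk in the underlying graph, which contains an undirected cycle. Conversely, given an undirected cycle $C$ in $N$, I would classify each vertex $v$ of $C$ according to the orientations of its two incident cycle-edges in $N$: either both arcs point into $v$ (a \emph{valley}, forcing $indeg_N(v) \geq 2$ so $v$ is hybrid), both arcs point out of $v$ (a \emph{peak}), or one of each (a pass-through). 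Since $N$ is acyclic, no undirected cycle in $N$ can be consistently oriented as a directed cycle, so $C$ contains at least one peak and one valley. Peaks and valleys necessarily alternate along $C$, yielding $k \geq 1$ of each; labelling them in cyclic order as $v_1, h_1, \ldots, v_k, h_k$, the segments of $C$ between consecutive peak-valley pairs are directed paths from $v_i$ (respectively $v_{i+1}$) to $h_i$, internally vertex-disjoint because $C$ is a simple cycle. This is exactly a $k$-alternating cycle.

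\textbf{Main obstacle.} I expect the most delicate step is constructing a $k$-alternating cycle from an arbitrary undirected cycle $C$ in $N$. The key observation is that acyclicity of $N$ as a digraph forbids any undirected cycle from being fully directionally consistent, so at least one peak and one valley must exist, yielding a hybrid vertex on $C$ and a non-degenerate alternating structure with $k\geq 1$. Once this is in place, the alternation of peaks and valleys along $C$ and the identification of the directed path segments are immediate from the topology of a simple cycle. The remaining parts --- the edge count for (i)$\Leftrightarrow$(ii) and the concatenation argument in the easy direction of (i)$\Leftrightarrow$(iii) --- are routine once the counting identity $|V(N)| - r(N) + \tilde h(N)$ is set up.
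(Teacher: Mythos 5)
Your proposal is correct, but it reaches (i)$\Leftrightarrow$(ii) by a genuinely different route than the paper. The paper proves (i)$\Rightarrow$(ii) by induction on $r(N)$, peeling off a root $r$ for which $N-r$ is still a (necessarily arboreal) network --- this relies on the connectivity claim established inside the proof of Lemma~\ref{lm-rh1} --- and proves (ii)$\Rightarrow$(i) by a local surgery (detaching one incoming arc of a hybrid vertex and hanging a new leaf there) combined with the inequality of Lemma~\ref{lm-rh1}. Your degree-counting identity, namely that the number of arcs equals $|V(N)|-r(N)+\tilde h(N)$ because non-root non-hybrid vertices have indegree exactly $1$ and roots have indegree $0$, gives both directions at once from the standard fact that a connected graph on $n$ vertices is a tree iff it has $n-1$ edges; note that acyclicity of $N$ guarantees the underlying simple graph has exactly as many edges as $N$ has arcs, so the count transfers. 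This is shorter, avoids the induction and the surgery entirely, and even re-derives Lemma~\ref{lm-rh1} as the statement that a connected graph has at least $n-1$ edges; what the paper's route buys instead is the reusable root-removal claim, which it invokes again elsewhere. For (i)$\Leftrightarrow$(iii) the paper simply asserts that undirected cycles correspond bijectively to $k$-alternating cycles and calls the verification straightforward; your peak/valley argument (acyclicity forbids a consistently oriented cycle, peaks and valleys alternate, valleys are hybrid, and simplicity of the cycle gives internal vertex-disjointness of the segments) is precisely the omitted verification and is sound. The only point worth tightening in a final write-up is the degenerate case $k=1$, where one should note that the two internally vertex-disjoint paths from $v_1$ to $h_1$ are distinct, so the closed walk they form is a genuine cycle.
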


\begin{proof}
(i)  $\Rightarrow$ (ii) Suppose that $N$ is an arboreal network. We show 
that $\tilde h(N)=r(N)-1$ using induction on $r(N)$. For the base case, if $r(N)=1$, then $N$ is a 
phylogenetic tree. So, $\tilde h(N)=0=r(N)-1$.

Now, suppose that $r(N)\ge 2$ and that the stated equality holds for all arboreal networks $N'$ with $r(N')<r(N)$. 
Consider a root $r$ of $N$ such that $N'=N-r$ is a network, which 
exists by the claim in the second paragraph of the proof of Lemma~\ref{lm-rh1}. 
Furthermore, $N'$ is arboreal and $r(N')=r(N)-1$. Also, 
$\tilde h(N')=\tilde h(N)-1$ since $N'$  has one root less than $N$ and so $N$ must have a unique hybrid vertex whose indegree decreased by precisely 1 in the construction of $N'$.
By induction hypothesis, it follows that $\tilde h(N)=r(N)-1$, as required.

(ii)  $\Rightarrow$ (i) Suppose for contradiction that $N$ is such that
 $\tilde{h}(N)=r(N)-1$ but $N$ is not arboreal. Then
there must exist a hybrid vertex $h$ in $H(N)$ and a parent $v \in V(N)$ of $h$ such that removing the incoming arc $(v,h)$ of $h$
does not disconnect $N$. Consider now the graph $N'$ obtained from $N$ by 
removing the arc $(v,h)$, introducing a new leaf $x$, adding the arc $(v,x)$, and 
suppressing $h$ if this has rendered it a vertex with indegree and outdegree 1. Since $N'$ is connected with leaf set $X \cup \{x\}$, $N'$ 
is a network on $X \cup \{x\}$. Furthermore, $r(N')=r(N)$ and $\tilde h(N')=\tilde h(N)-1$. By 
Lemma~\ref{lm-rh1}, $\tilde h(N') \geq r(N')-1$. Since $r(N')=r(N)$  it 
follows that $\tilde h(N)-1=\tilde{h}(N')\geq r(N')-1=r(N)-1=\tilde{h}(N)$; a contradiction.

(i)  $\Leftrightarrow$ (iii) It is straight-forward to check that the cycles in the 
underlying graph of $N$ are
in 1-1 correspondence with the $k$-alternating cycles of $N$, from which the 
equivalence of (i) and (iii) immediately follows.
\end{proof}

\section{The shared ancestry graph}\label{sec-sag}

Let $N$ be a network on $X$.
The \emph{shared ancestry graph $\mathcal A(N)$ (of $N$)} is the  graph 
whose vertex set is $X$ and in which two distinct vertices $x,y$ of $X$ are joined by an edge if and only 
if they share an ancestor in $N$. Note that since $N$ is connected, $\mathcal A(N)$ is also connected. 
In addition, note that if $N$ is a phylogenetic network then $\mathcal A(N)$ 
is a complete graph. However, the converse does not necessarily hold.
In this section, we shall prove that given any connected graph $G$ 
with vertex set $X$, we can construct a network $N$ with leaf set $X$
from any edge clique cover of $G$ whose shared ancestry graph is $G$.

We begin with some observations on shared ancestry graphs, and their
relationship with edge clique covers.
We say that a connected graph $G$ with vertex set $X$ is \emph{representable} 
if  there exists a network $N$ on $X$ such that $G$ is isomorphic to $\mathcal A(N)$ 
and that isomorphism is the identity on $X$. In that case, we also say that $N$ {\em represents} $G$.

\begin{proposition}\label{pr-univ}
Any connected graph $(X,E)$ is representable
by an $|E|$-rooted network on $X$.
\end{proposition}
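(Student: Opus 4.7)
The plan is to build $N$ explicitly from the trivial edge clique cover of $G=(X,E)$ by its edges (each viewed as a $2$-clique). For every edge $e=\{x,y\}\in E$ introduce a fresh vertex $r_e$; for every vertex $x\in X$ with $\deg_G(x)\ge 2$ introduce a fresh vertex $h_x$; and retain $X$ as the leaf set. For every $e=\{x,y\}\in E$ add an arc from $r_e$ to $h_x$ if $\deg_G(x)\ge 2$ and an arc directly from $r_e$ to $x$ otherwise, and similarly for $y$; finally, add an arc $(h_x,x)$ for each $x$ with $\deg_G(x)\ge 2$. Intuitively, $r_e$ will be the unique common ancestor of the two endpoints of $e$.

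Next I would check that the resulting digraph $N$ is indeed a network on $X$ with exactly $|E|$ roots. Acyclicity is immediate since arcs only go from a ``root layer'' down to a ``hybrid layer'' and thence to $X$. Each $r_e$ has indegree $0$ and outdegree $2$, so the roots of $N$ are exactly $\{r_e : e\in E\}$; each $h_x$ has outdegree $1$ and indegree $\deg_G(x)\ge 2$; and each $x\in X$ has outdegree $0$ and indegree $1$ (either the unique arc from $h_x$, or, when $\deg_G(x)=1$, the unique arc from the root associated with the edge incident to $x$). In particular no vertex has indegree and outdegree both equal to $1$. Connectedness of $N$ follows from connectedness of $G$: any path $x=x_0,x_1,\ldots,x_k=y$ in $G$ lifts to a walk in the underlying graph of $N$ by going, at each step, through $r_{\{x_i,x_{i+1}\}}$ together with the intermediate hybrid vertices $h_{x_i},h_{x_{i+1}}$ when they exist.

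It then remains to verify that $\mathcal{A}(N)=G$. The forward inclusion is by construction: for $\{x,y\}\in E$ the root $r_{\{x,y\}}$ is a common ancestor of $x$ and $y$. For the reverse, suppose $x,y\in X$ share an ancestor $v$ in $N$; then $v$ must have at least two leaves below it. A hybrid vertex $h_z$ has only the single leaf $z$ as descendant, so $v$ cannot be a hybrid vertex, and $v$ must be some $r_e$ with $e=\{x',y'\}\in E$, whose descendant leaves are exactly $x'$ and $y'$. Hence $\{x,y\}=e\in E$, as required. The construction is essentially forced, and the only subtlety—and hence the only potential obstacle—is the case distinction needed for vertices of degree $1$ in $G$, which is there precisely to avoid creating a forbidden indegree-$1$ and outdegree-$1$ vertex.
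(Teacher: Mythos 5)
Your proof is correct and is essentially the paper's construction: the paper attaches a fresh parent $x_p$ to every $x\in X$, adds a root $v_e$ above the two parents for each edge $e$, and then suppresses the resulting indegree-$1$/outdegree-$1$ vertices, which is exactly your network with the degree-$1$ case distinction carried out up front. Your verification that the result is a network and that $\mathcal A(N)=G$ is, if anything, more detailed than the paper's.
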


\begin{proof}
	Suppose that $G=(X,E)$ is a connected graph.
We prove the proposition by constructing a $|E|$-rooted network $N$ on $X$ that represents $G$.

We initialize the construction of $N$ with the set of 
arcs $(x_p,x)$ where, for all $x \in X$,  we have that $x_p\not\in X$ and 
$x_p\not=y_p$, for all $x,y\in X$ distinct.
Then for all edges $e=\{x,y\}$ of $G$ taken in turn, we add to 
$N$ a vertex $v_e$, and two arcs $(v_e,x_p)$ and $(v_e,y_p)$. 
Since $G$ is connected, the digraph $N$ obtained once all 
edges of $G$ have been processed (and after all vertices of indegree and outdegree 1 
have been removed) is connected. Moreover, $N$ has leaf set $X$ and contains $|E|$ 
roots. Hence, $N$ is an $|E|$-rooted network on $X$. By construction, for 
any two distinct elements $x,y \in X$, there exists a vertex $v$ in $N$ 
that is an ancestor of $x$ and $y$ if and only if $\{x,y\}$ is an edge of $G$. Hence, $N$ represents $G$.
\end{proof}

Note that although the network $N$ constructed from $G$ in 
the proof of Proposition~\ref{pr-univ} is a network 
representing $G$ it is not necessarily the only
network on $X$ satisfying this property. Moreover, $N$ has many more roots than 
is usually necessary (viz. the number of edges in $G$).  
In the following, we present a way to construct a network representing any  connected graph $G$
with a minimum number of roots amongst all possible networks that represent $G$.

We begin with introducing some further terminology. For $G=(X,E)$ a  
graph and $\emptyset\not=Y \subseteq X$,
the \emph{subgraph $G[Y]$ of $G$ induced by $Y$} is the graph 
whose vertex set is $Y$ and any two vertices $u$ and $v$ in $Y$ are joined by an edge if $\{u,v\}\in E$.
%
For $G'$ a graph, we say that $G$ \emph{contains $G'$ (as an induced subgraph)} 
if there exists $Y \subseteq X$ such that $G'$ is isomorphic to $G[Y]$ and that isomorphism is the identity on $Y$.
A subset $Y\subseteq X$ is called a {\em clique (of $G$)} if $|Y|\geq 2$ and $\{x,y\}\in E$, 
for all $x,y\in Y$ distinct. 
If, in addition, there is no proper superset $Y'$ of $Y$ that is also a clique of $G$, then we say that $Y$ is a \emph{maximal clique} of $G$. 
Denoting by $\mathcal P(X)$ the powerset of $X$, we define $K(G) \subseteq \mathcal P(X)$ 
to be the set of all subsets of $X$ that are a maximal 
clique in $G$. Note that if $G$ does not contain isolated vertices, then 
each element of $X$ 
is contained in at least one set in $K(G)$.

Interestingly, if a network $N$ does not contain 3-alternating cycles then, as Lemma~\ref{lm-clique} shows,
the cliques in $\mathcal A(N)$ provide key information concerning the structure of $N$.

\begin{lemma}\label{lm-clique}
Let $N$ be a network on $X$  that does not contain 3-alternating cycles.
Let $Y \subseteq X$ with $|Y| \geq 2$. Then $Y$ is a clique in $\mathcal A(N)$ if and only if 
there exists a vertex 
in $N$ that is an ancestor of all leaves in $Y$.
\end{lemma}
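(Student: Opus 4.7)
My plan is to prove the two directions of the equivalence separately. The ``if'' direction is immediate: if some vertex $v$ of $N$ is an ancestor of every leaf in $Y$, then for any two distinct $x,y \in Y$ the vertex $v$ witnesses the edge $\{x,y\}$ in $\mathcal{A}(N)$, and so $Y$ is a clique.

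For the ``only if'' direction, I would proceed by induction on $n = |Y|$, with base case $n=2$ immediate from the definition of $\mathcal{A}(N)$. For the inductive step $n \geq 3$, I would suppose for contradiction that $Y$ is a clique with no common ancestor and aim to produce a $3$-alternating cycle of $N$. For each $y_i \in Y$, apply the inductive hypothesis (or, if $n=3$, just the definition of $\mathcal{A}(N)$) to obtain a common ancestor $a_i$ of $Y\setminus\{y_i\}$. Since $Y$ has no common ancestor, $a_i$ cannot be an ancestor of $y_i$, and hence $a_i$ is an ancestor of $y_j$ precisely when $j \neq i$; it also follows that the $a_i$ are pairwise incomparable, for if $a_i$ were an ancestor of $a_j$ then $a_i$ would inherit $y_j$ as a descendant, a contradiction.

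Next, I would pick three distinct indices $i,j,k$. For the pair $(a_i,a_j)$ the leaf $y_k$ is a common descendant, so the set
\[
D_{ij}^{y_k} \;=\; \{u \in V(N) : a_i, a_j \text{ are both ancestors of } u \text{ and } u \text{ is an ancestor of } y_k\}
\]
is nonempty, and I would let $h_{ij}$ be a maximal element of it (one with no strict ancestor in $D_{ij}^{y_k}$). A standard maximality argument then shows that $h_{ij}$ is a hybrid vertex and that one can choose internal vertex-disjoint directed paths from $a_i$ and from $a_j$ to $h_{ij}$: if the chosen paths had a common immediate predecessor of $h_{ij}$, or a shared internal vertex, that vertex would itself lie in $D_{ij}^{y_k}$ strictly above $h_{ij}$, contradicting maximality. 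I would define $h_{jk}$ and $h_{ki}$ analogously, anchored by $y_i$ and $y_j$ respectively.

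The main obstacle is to verify that $h_{ij}, h_{jk}, h_{ki}$ are pairwise distinct, and distinct from $a_i,a_j,a_k$, so that they really form the hybrid portion of a $3$-alternating cycle. This is where the choice of anchoring leaves pays off: if, say, $h_{ij} = h_{jk} = h$, then $h$ is a descendant of $a_i$ (since $h = h_{ij}$) and an ancestor of $y_i$ (since $h = h_{jk}$), forcing $a_i$ to be an ancestor of $y_i$ by transitivity, contradicting what was shown above. The remaining coincidences, including those of the form $h_{ij} = a_\ell$, are excluded by symmetric transitivity arguments. The sequence $a_i, h_{ij}, a_j, h_{jk}, a_k, h_{ki}$ would then be a $3$-alternating cycle of $N$, contradicting the hypothesis and completing the induction.
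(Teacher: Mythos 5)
Your proof is correct and follows essentially the same route as the paper's: both obtain a common ancestor of each set $Y\setminus\{y_i\}$ (you via induction, the paper via a minimal-counterexample choice of $Y$), show these ancestors are pairwise incomparable, and then extract three hybrid vertices forming a $3$-alternating cycle; your leaf-anchored choice of the $h_{ij}$ makes the distinctness check slightly more explicit than the paper's. The only blemish is a swapped index in your incomparability argument: if $a_i$ were an ancestor of $a_j$ it would inherit $y_i$ (not $y_j$, which it already has) as a descendant, making it a common ancestor of all of $Y$.
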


\begin{proof}
One direction is trivial. Indeed, if all leaves in $Y$ share an 
ancestor in $N$ then any two elements in $Y$ are joined by an edge in $\mathcal A(N)$
by definition. Hence, $Y$ is a clique in $\mathcal A(N)$\footnote{Note that this direction 
	holds for all networks $N$, including networks containing $3$-alternating cycles.}.

Conversely, assume for contradiction that $N$ is a network on $X$ and that $Y \subseteq X$ 
	with $|Y|\geq 2$ is such that $Y$ is a clique in $\mathcal A(N)$ but no common ancestor in $N$ of the elements in $Y$ exists. 
Without loss of generality we may assume that $Y$ is such that for all subsets $Y'\subseteq Y$ 
with $|Y'|\geq 2$ there exists an ancestor in $N$ of all elements in $Y'$. 
Then $|Y|\geq 3$  as otherwise $Y$ is a clique of $\mathcal A(N)$ 
in the form of an edge  $\{x,y\}$. Then $Y=\{x,y\}$ and so there exists an ancestor of every element  of $Y$ in $N$ which is impossible.
By assumption on $Y$, it follows  for all $x\in Y$ that all elements in $Y-\{x\}$ have an 
ancestor $v_{Y,x}$ in $N$. Without loss of generality, we can choose  $v_{Y,x}$ 
such that no child of $v_{Y,x}$ also enjoys this property. 

We claim that the vertices $v_{Y,x}$, $x\in Y$, are pairwise incomparable and 
	therefore necessarily distinct. To see the claim, assume for contradiction that there exist $x,y\in Y$ 
	distinct such that $v_{Y,x}$ and $v_{Y,y}$ are not incomparable. Then $v_{Y,x}$ is an 
	ancestor of $v_{Y,y}$ or vice versa. Assume without loss of generality that $v_{Y,x}$ is 
	an ancestor of $v_{Y,y}$. Then $v_{Y,x}$ is an ancestor of all elements in $Y$ as 
	$x\in Y-y$ and $v_{Y,y}$ is an ancestor of the elements in $Y-y$; a contradiction in view of our assumption on $Y$.

Consider three distinct elements $x,y,z \in Y$ and the corresponding 
vertices $v_{Y,x},v_{Y,y},v_{Y,z} \in V(N)$. Since $v_{Y,x}$ and $v_{Y,y}$ are both ancestors 
of $z$ and incomparable, there exists a hybrid vertex $h_z$ that lies on the directed
paths from $v_{Y,x}$ to $z$ and from $v_{Y,y}$ to $z$. Note 
that we can choose $h_z$ such that no strict ancestor of $h_z$ belongs to 
those two paths. We can define vertices $h_y$ and $h_x$ in a similar way. 
It follows that the sequence $v_{Y,x},h_z,v_{Y,y},h_x,v_{Y,z},h_y$ is a 3-alternating cycle 
of $N$, which is impossible by assumption on $N$. Hence, all 
elements of $Y$ share an ancestor in $N$.
\end{proof}

Note that the assumption that $N$ does not contain a $3$-alternating cycle is necessary for
Lemma~\ref{lm-clique} to hold. In particular,
there exists networks $N$ that contain 3-alternating cycles and are such 
that for all $Y \subseteq X$, $|Y|\geq 2$, that is a clique in $\mathcal A(N)$ 
there exists a vertex $v$ in $N$ 
that is an ancestor of all leaves in $Y$. For example, the phylogenetic network $N$ depicted 
in Figure~\ref{fig-intro}(iii) contains a 3-alternating cycle, $\mathcal A(N)$ is a 
	clique with vertex set $Y=\{1,2,3\}$, and  $C(v)=Y$. However if we remove 
$v$ and its incident arcs from $N$ (suppressing resulting vertices of indegree and outdegree 1), 
then no vertex of the resulting network is an ancestor of all elements in $Y$.

We now continue with finding a network that represents a connected graph $G$
with vertex set $X$ 
with a minimum number of roots. To this end, we say that a subset $K$ of $\mathcal P(X)$ is 
an \emph{edge clique cover} of $G$ if every $Y \in K$ is a (not necessarily maximal) clique in $G$, 
and for every edge $\{x,y\}$ in $G$, there exists $Y \in K$ such that $x,y \in Y$. 
Note that $K(G)$ is always an edge clique cover of $G$, although $G$ may admit edge clique covers 
containing fewer elements than $K(G)$. We define the \emph{edge clique cover number} 
$\mathrm{ecc}(G)$ of $G$ as $\mathrm{min}\{|K| \,:\, K \text{ is an edge clique cover of } G\}$. In other 
words, $\mathrm{ecc}(G)$ is the minimum size of an edge clique cover of $G$ over all
such covers. 

Interestingly, and as Lemma~\ref{lm-minr} shows, the edge clique cover number of 
a connected graph $G$ 
provides a lower bound on the number of roots of a network that represents $G$.

\begin{lemma}\label{lm-minr}
Let $G$ be a connected graph with vertex set $X$. For $N$ a network on $X$ 
representing $G$, we have $ecc(G) \leq r(N)$.
\end{lemma}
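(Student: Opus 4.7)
The plan is to exhibit an explicit edge clique cover of $G$ of size at most $r(N)$, obtained directly from the roots of $N$. For each root $r \in R(N)$, recall that $C(r) \subseteq X$ denotes the set of leaves below $r$; I would take $K = \{C(r) : r \in R(N),\ |C(r)| \geq 2\}$ as the candidate cover. Clearly $|K| \leq r(N)$, so the content is in verifying that $K$ is an edge clique cover of $G$.

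First I would check that each element of $K$ is a clique of $G$. If $r \in R(N)$ with $|C(r)|\geq 2$ and $x,y$ are two distinct leaves in $C(r)$, then $r$ is a common ancestor of $x$ and $y$ in $N$, so $\{x,y\}$ is an edge of $\mathcal{A}(N) = G$ by definition of the shared ancestry graph. Hence $C(r)$ is a clique of $G$ in the sense of the paper.

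Next I would show that every edge of $G$ is covered by some $C(r) \in K$. Let $\{x,y\}$ be an edge of $G$. Since $G = \mathcal{A}(N)$, there exists some $v \in V(N)$ that is an ancestor of both $x$ and $y$. Because $N$ is a finite acyclic digraph, we may trace a reverse directed path from $v$ until we reach a vertex with no incoming arcs, that is, some root $r \in R(N)$. Then $r$ is itself an ancestor of both $x$ and $y$, so $\{x,y\} \subseteq C(r)$; in particular $|C(r)|\geq 2$, so $C(r)\in K$.

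Combining these two observations, $K$ is an edge clique cover of $G$ with at most $r(N)$ elements, yielding $\mathrm{ecc}(G) \leq r(N)$. There is no real obstacle here beyond the minor bookkeeping point that some roots might satisfy $|C(r)|=1$ (which is possible since children of $r$ can funnel through hybrid vertices to a single leaf); such roots contribute no edges and are simply dropped from $K$, which only strengthens the inequality.
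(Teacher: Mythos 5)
Your proposal is correct and follows essentially the same route as the paper: both take the sets $C(r)$ for $r\in R(N)$ as the candidate edge clique cover and observe that any common ancestor of $x$ and $y$ lies below some root. Your explicit handling of roots with $|C(r)|=1$ (which the paper's definition of clique excludes) is a minor tidying-up that the paper glosses over, but it does not change the argument.
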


\begin{proof}
It suffices to show that the set $K=\{C(r) \,:\, r \in R(N)\}$ is an edge clique cover of $G$. 
Clearly, every set $C(r)$, $r \in R(N)$, is a clique of $G$. Now, suppose for contradiction 
that $K$ is not an edge clique cover of $G$. Then there exists an edge $\{x,y\}$ of $G$ 
such that no root $r$ of $N$ satisfies $x,y \in C(r)$. In particular, no vertex $v$ of $N$ satisfies $x,y \in C(v)$. 
But this is impossible since $N$ represents $G$. The lemma now follows since, clearly,  $\mathrm{ecc}(G) \leq |K| \leq r(N)$.
\end{proof}

To prove the main result of this section (Theorem~\ref{thm-unirep}), we require some further definitions.
First, given  a set $\mathcal C \subseteq \mathcal P(X)$ of non-empty subsets of $X$,
we define a network $N(\mathcal C)$ on $X$ as follows. First take the 
cover digraph $H(\mathcal C)$ of $\mathcal C$ \cite[p.252]{S16},
that is, the digraph with vertex set $\mathcal C$, and
two distinct vertices $A, B \in \mathcal C$ joined by the arc $(A,B)$ if and only 
if $B \subsetneq A$ and there is no set $C \in \mathcal C$ with $B \subsetneq C \subsetneq A$.
To obtain $N(\mathcal C)$ from $H(\mathcal C)$, we first add (i) for all $x \in X$ 
with $\{x\} \notin \mathcal C$, a new vertex $\{x\}$ with outdegree $0$, and 
(ii) an arc from a vertex $A$ in $H(\mathcal C)$ to the vertex $\{x\}$ if $x \in A$ 
and no child of $A$ in $H(\mathcal C)$ contains $x$. To the resulting digraph we then 
(i) add  a child to every vertex with outdegree $0$ and indegree $2$ or more, and (ii) 
identify all leaves $l$ in the resulting digraph
with the unique element $x \in X$ such that $l=\{x\}$ or $l$ is a child of $\{x\}$.

Now, for any connected graph $G$ with vertex set $X$, and any edge clique cover $K$ of $G$,
we let 
$$
\mathcal C(K) = \{\bigcap_{Y \in S} Y \,:\, S \subseteq K \mbox{ and } \bigcap_{Y \in S} Y \neq \emptyset \},
$$
and we set $N(K)=N(\mathcal C(K))$.
As an illustration of these definitions, consider the graph $G$ depicted in Figure~\ref{fig-nk}(i). 
Then $N(K)$ is pictured in Figures~\ref{fig-nk}(ii) and \ref{fig-nk}(iii)
for $K$ the edge clique cover  $\{ \{1,2,3,4\}, \{3,4,5,6\}\}$ and $\{ \{1,2,3\}, \{1,2,4\}, \{3,4\}, \{3,5,6\}, \{4,5,6\}\}$ 
of $G$, respectively.

\begin{figure}[h]
	\begin{center}
		\includegraphics[scale=0.8]{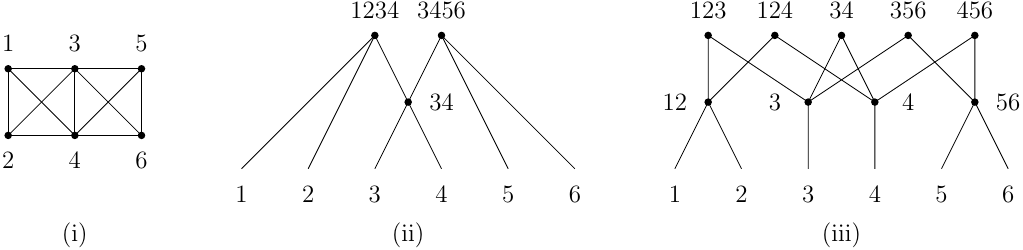}
	\end{center}
	\caption{(i) A graph $G$ with vertex set $X=\{1, \ldots, 6\}$.
			(ii) The network $N(K)$ for the edge-clique 
			cover $K=\{ \{1,2,3,4\}, \{3,4,5,6\}\}$ of $G$. 
			(iii) The network $N(K)$ for the edge-clique cover 
			$K=\{ \{1,2,3\}, \{1,2,4\},$ $ \{3,4\}, \{3,5,6\}, \{4,5,6\}\}$ of $G$. 
			For brevity, we represent a vertex $\{a_1,\ldots, a_p\}$, $p\geq 1$, of $ N(K)$ as the string $a_1a_2\ldots, a_p$.
	\label{fig-nk}
}
\end{figure}

We now show how an edge clique cover of a connected graph $G$ gives rise to a network representing $G$.

\begin{theorem}\label{thm-unirep}
Suppose that $G$ is a connected  graph with vertex set $X$. If $K$ is an edge clique cover of $G$, then
$N(K)$ is a network on $X$ that represents $G$. Moreover, $R(N(K)) \subseteq K$, and $R(N(K))=K$ if and only if $K$ 
does not contain two distinct sets such that one is a subset of the other. 
In particular, if $|K|$ is minimum (so that $|K|=ecc(G)$ and $R(N(K))=K$), 
then $N(K)$ has a minimum number of roots amongst all representations of $G$.
\end{theorem}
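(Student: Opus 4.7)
The plan is to verify the four assertions in sequence, using the key observation that the maximal elements of $\mathcal{C}(K)$ under inclusion coincide with the maximal elements of $K$ under inclusion.

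First, I would check that $N(K)$ is a network on $X$: acyclicity is immediate since all arcs (those of $H(\mathcal{C}(K))$ and the added leaf-attachment arcs) point strictly downward in the inclusion order; the leaf set is $X$ by the final identification step; and connectedness follows because every internal vertex $A \in \mathcal{C}(K)$ is joined to some leaf $x \in A$ via a descending path, and, for any edge $\{x,y\}$ of $G$, the edge-clique-cover property supplies $Y \in K \subseteq \mathcal{C}(K)$ with $x,y \in Y$, so $x$ and $y$ are linked in the underlying graph through $Y$; connectedness of $G$ then transfers to $N(K)$. For the degree conditions, a root $A$ of $N(K)$ is a maximal element of $\mathcal{C}(K)$, lies in $K$, and hence satisfies $|A| \geq 2$; its outdegree is at least $2$ because each $x \in A$ has a unique successor in $N(K)$ (either a child of $A$ in $H(\mathcal{C}(K))$ containing $x$, or the attached vertex $\{x\}$), and if all these successors coincided with a single $B$ then $A \subseteq B$, which forces either $B \subsetneq A$ in the Hasse case or $|A| = 1$ in the singleton case, both impossible. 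The same outdegree argument rules out any internal $A$ with $|A| \geq 2$ having outdegree $1$, while step (i) of the last construction stage eliminates the indegree-and-outdegree-$1$ case for singleton vertices.

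Second, I would verify that $\mathcal{A}(N(K)) = G$. An easy induction on the rank in the Hasse diagram shows that the set of descendants in $X$ of any $A \in \mathcal{C}(K)$ is exactly $A$. So distinct $x,y \in X$ share an ancestor in $N(K)$ if and only if some $A \in \mathcal{C}(K)$ contains both; if $\{x,y\} \in E(G)$ the cover property gives $Y \in K$ with $x,y \in Y$, and conversely, if $A = \bigcap_{Y \in S} Y$ contains both, any $Y \in S$ contains both and, being a clique of $G$, yields $\{x,y\} \in E(G)$.

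Third, the roots of $N(K)$ correspond to the maximal elements of $\mathcal{C}(K)$. Such a maximal $A$ must lie in $K$, for otherwise $A = \bigcap_{Y \in S} Y$ with $|S| \geq 2$ would force $A \subsetneq Y$ for every $Y \in S$, contradicting maximality; and $A \in K$ is maximal in $\mathcal{C}(K)$ iff it is maximal in $K$, since any $B \in \mathcal{C}(K)$ strictly containing $A$ is itself contained in some $Y \in K$ strictly containing $A$. Hence $R(N(K)) \subseteq K$, with equality iff no two distinct members of $K$ are nested. If in addition $|K| = \mathrm{ecc}(G)$, such nesting is impossible: $Y_1 \subsetneq Y_2$ in $K$ would let $K \setminus \{Y_1\}$ remain an edge clique cover (every edge of the clique $Y_1$ already being covered by $Y_2$), contradicting minimality; thus $R(N(K)) = K$ and $r(N(K)) = |K| = \mathrm{ecc}(G)$, which matches the lower bound supplied by Lemma~\ref{lm-minr}. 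The chief difficulty is the first step: unwinding the multi-stage definition of $N(K)$ carefully enough to confirm that every clause of the definition of a network holds (in particular, the absence of any vertex of indegree and outdegree both equal to $1$) is fiddly, though conceptually everything reduces to manipulating intersections under inclusion.
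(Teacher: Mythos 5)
Your proof is correct and follows essentially the same route as the paper: verify the network axioms directly from the construction, identify the leaf-descendant set of each vertex of $H(\mathcal{C}(K))$ with the corresponding member of $\mathcal{C}(K)$, characterize the roots as the maximal elements of $\mathcal{C}(K)$ (which necessarily lie in $K$), and invoke Lemma~\ref{lm-minr} for the minimality claim. (One immaterial slip: an element $x\in A$ need not have a \emph{unique} successor below $A$, since two incomparable children of $A$ in $H(\mathcal{C}(K))$ can both contain $x$; but your outdegree argument only needs that no single child of $A$ contains all of $A$, which holds.)
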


\begin{proof}
To ease notation, we put $N=N(K)$.

We first show that $N$ is a network on $X$.
Clearly, $N$ is acyclic and directed by definition. By construction, 
all vertices in $N$ with outdegree $0$ have indegree $1$, and so $X$ is contained in the
leaf set of $N$. To see that  the leaf set of $N$ is also contained in $X$, 
suppose that $N$ has a leaf $l$ that is not in $X$. 
Then $l$ corresponds to a set $A$ of $\mathcal C(K)$ of size $2$ or more. 
But by construction, for all $x \in A$, the vertex $x$ is a descendant of $l$, a contradiction. 
Note that this observation also shows that all sets $A \in \mathcal C(K)$ of size $2$ or more 
have at least two children in $N$. Hence, no vertex of $N$ has 
indegree and outdegree 1 in $N$ and all roots of $N$ have outdegree at least $2$.

To see that $N$ is a network, it remains to show that $N$ is connected. 
Suppose $x,y \in X$ distinct. Since $G$ is connected, there is a path
$x=v_1, \ldots v_k=y$, $k \geq 2$, in $G$, such that $v_i \in X$, $1 \le i \le k$. 
Since $K$ is an edge clique cover of $G$, for every such $i$, there exists a 
set $Y_i \in K$ such that $v_i, v_{i+1} \in Y_i$. In particular, $Y_i$ 
is a vertex of $N$ since $\{Y_i\}\subseteq K$, 
and there exists directed paths from $Y_i$ to $v_i$ 
and from $Y_i$ to $v_{i+1}$ in $N$. Hence, for all $1 \leq i \leq k-1$, 
there exists an path between $v_i$ and $v_{i+1}$ in the underlying  graph $U(N)$ of $N$. 
So there is a path in $U(N)$ between $x$ and $y$. Since this holds for 
all $x, y \in X$ and $N$  is acyclic, it follows that  $U(N)$ is connected.
Hence, $N$ is connected.

To see that $N$ is a representation of $G$, suppose that 
$x, y \in X$ distinct. Then, by construction, $x$ and $y$ share an ancestor in $N$ if and only 
if there exists some $Y \in K$ such that $x,y \in Y$. Since $K$ is an edge clique cover of $G$, this is the case 
if and only if $\{x,y\}$ is an edge in $G$, as required. 

To see that $R(N)\subseteq K$, note that for all $Y \in K$, we have $Y \in V(N)$
because $\{Y\}\subseteq K$. 
Moreover, all vertices $Z \in V(N)$ satisfy $Z \subseteq Y$ for some $Y \in K$. 
In particular, if $Z$ has indegree $0$ in $N$, then $Z \in K$. Hence, $Z$ must be a roof of $N$ and so $R(N) \subseteq K$. 

To see that $R(N)= K$ holds under the stated condition, note that a set $Z \in K$ 
has indegree $0$ in $N$ if and only if $Z \in K$ and 
no element $Z' \in K$ satisfies $Z \subsetneq Z'$. Hence, $R(N)=K$ holds if and only if $K$ 
does not contain two distinct sets such that one is a subset of the other.

Using this last observation, to see that the final statement of the theorem holds, it suffices to remark 
that if $|K|=\mathrm{ecc}(G)$, then $K$ does not contain $Y,Y'$ 
such that $Y \subsetneq Y'$. Otherwise, $K-\{Y\}$ is an edge clique cover of $G$ that
contains strictly fewer elements than $K$, a contradiction. So, in view of the above, it 
follows that $r(N)=|K|=\mathrm{ecc}(G)$. By Lemma~\ref{lm-minr}, $\mathrm{ecc}(G) \leq r(N')$ 
holds for all representations $N'$ of $G$, so the theorem follows.
\end{proof}

\section{Ptolemaic graphs}\label{sec-ptol}

In this section, we present some properties of Ptolemaic graphs, as defined in the introduction.
We begin by stating two key characterizations of Ptolemaic graphs from the literature.

For $k \geq 3$, we let $C_k$ denote the cycle on $k\geq 3$ vertices.
A graph $G$ is \emph{chordal} if it contains no induced cycle of
length 4 or more. In addition, the {\em gem}
is the graph pictured in Figure~\ref{PCk}.
In the following result, the equivalence between (i) and (ii) is proven in \cite{H81}, and the equivalence between 
(i) and (iii) is proven in \cite[Theorem 5]{UU09}
\footnote{Note that the statement of Theorem~\ref{thm-uu}
is slightly more general than that of \cite[Theorem 5]{UU09}
since in \cite{UU09} a Ptolemaic graph is assumed to be connected.}.

\begin{figure}[h]
	\begin{center}
		\includegraphics[scale=0.8]{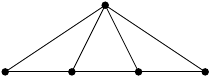}
	\end{center}
	\caption{The gem.}
	\label{PCk}
\end{figure}

\begin{theorem}\label{thm-uu} 
	Suppose that $G$ is a graph. Then the following are equivalent
	\begin{itemize}
	\item[(i)] $G$ is Ptolemaic.
	\item[(ii)] $G$ is chordal and does not contain the gem as an induced subgraph.
	\item[(iii)] the underlying  graph of $H(K(G))$ is acyclic. 
	\end{itemize}
\end{theorem}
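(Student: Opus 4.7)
The theorem collects two well-known characterizations of Ptolemaic graphs, so the plan is to establish (i)$\Leftrightarrow$(ii) and (i)$\Leftrightarrow$(iii) independently, from which (ii)$\Leftrightarrow$(iii) follows by transitivity. The arguments I sketch below are in the spirit of Howorka \cite{H81} and Uehara--Uno \cite{UU09}.

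For (i)$\Rightarrow$(ii), I would argue by contraposition. Suppose first that $G$ contains an induced $C_k$ with $k \ge 4$, and pick four vertices $x,y,z,u$ on the cycle in cyclic order so that each of $d^*(x,y),d^*(y,z),d^*(z,u),d^*(u,x)$ is realised along the cycle itself. Because the cycle is chordless in $G$, the diagonal distances $d^*(x,z)$ and $d^*(y,u)$ are forced to be large enough that Ptolemy's inequality fails; for $k=4$ the sides are $1$ and the diagonals are $2$, giving $4 > 1+1$, and a similar choice works for larger $k$. If instead $G$ contains an induced gem, a direct computation of the $\binom{5}{2}$ shortest-path distances in $G$ between its five vertices exhibits a violating four-tuple.

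For (ii)$\Rightarrow$(i), which I expect to be the main obstacle, I would pick any four vertices $x,y,z,u$ of $G$ and argue that Ptolemy's inequality holds for their shortest-path distances. The natural route is induction on $|X|$ along a perfect elimination ordering of the chordal graph $G$, combined with a case analysis showing that gem-freeness forces the four geodesics between these vertices to form a tree-like Steiner structure on at most one additional branching point. Ptolemy's inequality then reduces to an elementary numerical check on this structure. The difficulty is precisely in turning the forbidden-subgraph hypothesis into a numerical statement that holds uniformly over all quadruples, and a clean minimum-counterexample argument is probably unavoidable.

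For (i)$\Leftrightarrow$(iii), I would pass through the laminar structure of the maximal cliques. Assuming (i), and hence (ii), any two distinct maximal cliques of $G$ meet in a set that, together with the maximal cliques themselves, forms a laminar family; laminarity is exactly the condition that the underlying graph of $H(K(G))$ be acyclic. For the converse, acyclicity of $H(K(G))$ prevents the clique structure from supporting either an induced $C_4$ or an induced gem, so the already-established equivalence (i)$\Leftrightarrow$(ii) recovers Ptolemy. Verifying this translation between acyclicity and forbidden subgraphs is routine once the laminar viewpoint is in place; the bulk of the work still lies in the case analysis behind (ii)$\Rightarrow$(i).
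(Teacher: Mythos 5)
The paper offers no proof of this theorem: it is stated as a quotation of known results, with (i)$\Leftrightarrow$(ii) attributed to Howorka \cite{H81} and (i)$\Leftrightarrow$(iii) to \cite[Theorem 5]{UU09}. So there is no in-paper argument to compare yours against, and your proposal has to stand on its own. As it stands, it has genuine gaps. The most serious is that the central direction (ii)$\Rightarrow$(i) is not argued at all: you describe a plan (induction along a perfect elimination ordering, a Steiner-structure case analysis, ``a clean minimum-counterexample argument is probably unavoidable'') but supply none of it, and this is precisely where the content of Howorka's theorem lives. A second gap is in the direction you label as easy. For an induced $C_k$ with $k\ge 6$ the shortest-path distances in $G$ between cycle vertices need not equal the distances along the cycle, and even when they do the obvious quadruples can fail to violate Ptolemy: four consecutive vertices of an isometric $C_6$ give the three pairwise products $1$, $3$ and $4$, which satisfy the inequality with equality. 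So ``a similar choice works for larger $k$'' is not a proof; you must exhibit a specific quadruple and control the ambient distances (or reduce to a shortest induced cycle). The gem computation, by contrast, is fine, since the relevant distances are pinned down by the edges and the common neighbour $x$.

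For (i)$\Leftrightarrow$(iii) there is also an error of substance, not just of detail. You assert that acyclicity of the underlying graph of $H(K(G))$ ``is exactly'' laminarity of the family formed by the maximal cliques and their intersections. It is not: for the path $1$--$2$--$3$ the maximal cliques $\{1,2\}$ and $\{2,3\}$ overlap partially, so the family $\{\{1,2\},\{2,3\},\{2\}\}$ is not laminar, yet the cover digraph's underlying graph is a tree (and $P_3$ is Ptolemaic). Note also that $H(K(G))$ in statement (iii) must be read, as in the proof of Theorem~\ref{thm-wide}, as the cover digraph of $\mathcal C(K(G))$ --- the family of all nonempty intersections of maximal cliques --- since distinct maximal cliques are never nested and the cover digraph of $K(G)$ alone would be edgeless and trivially acyclic. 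The correct bridge in \cite{UU09} is the tree structure of this Hasse diagram, which is strictly weaker than laminarity, so the ``routine translation'' you invoke does not exist in the form you describe and this equivalence also still needs a proof.
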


We now make a general observation concerning the edge clique cover number of a Ptolemaic graph.

\begin{theorem}\label{lm-prime}
	Let $G$ be a connected graph with vertex set $X$. If $G$ is Ptolemaic, then there is no edge
		clique cover $K$ of $G$ distinct from $K(G)$ such that $|K| \leq |K(G)|$.
	In particular, $\mathrm{ecc}(G)=|K(G)|$.
\end{theorem}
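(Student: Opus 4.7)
My plan is a \emph{private edge} argument. Call $\{u,v\} \in \binom{C}{2}$ a \emph{private edge} of a maximal clique $C \in K(G)$ if $\{u,v\}$ lies in no other maximal clique of $G$. The main structural input is the following:

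\textbf{Key Lemma.} In any Ptolemaic graph $G$, every maximal clique has a private edge.

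Granted this lemma, the argument proceeds as follows. For each $C \in K(G)$ fix a private edge $e_C$, and let $K$ be any edge clique cover of $G$. Since $K$ covers $e_C$, there is some $Y_C \in K$ with $e_C \subseteq Y_C$; because $Y_C$ is a clique containing $e_C$ and $e_C$ is contained in only the single maximal clique $C$, we must have $Y_C \subseteq C$. This gives a map $\phi \colon K(G) \to K$. It is injective: if $\phi(C_1) = \phi(C_2) = Y$ then $e_{C_1} \cup e_{C_2} \subseteq Y \subseteq C_1 \cap C_2$, placing $e_{C_1}$ inside the maximal clique $C_2$ and forcing $C_1 = C_2$ by privateness. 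Hence $|K(G)| \leq |K|$, and since $K(G)$ is itself an edge clique cover, $\mathrm{ecc}(G) = |K(G)|$.

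For the uniqueness part, suppose $|K| \leq |K(G)|$ and $K \neq K(G)$. Then $|K| = |K(G)|$, $\phi$ is a bijection, and some $Y^* = \phi(C^*)$ is properly contained in $C^*$. Pick $z \in C^* \setminus Y^*$ and write $e_{C^*} = \{u,v\}$. The edges $\{z,u\}$ and $\{z,v\}$ of $C^*$ are covered by elements $Y',Y''$ of $K$ distinct from $Y^*$; let $C' = \phi^{-1}(Y')$ and $C'' = \phi^{-1}(Y'')$. Since $Y' \subseteq C'$ contains $u$ but cannot contain $v$ (otherwise $e_{C^*} \subseteq C' \neq C^*$, violating privateness), we obtain $u \in C' \setminus C''$ and symmetrically $v \in C'' \setminus C'$, while $z \in C^* \cap C' \cap C''$. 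Choose witnesses $a \in C' \setminus C^*$ and $b \in C'' \setminus C^*$ (nonempty since $C',C''$ are maximal and distinct from $C^*$). A short case analysis finishes the proof: (a) if $a \sim v$ then $\{z,u,v,a\}$ is a $K_4$ which extends to some maximal clique $D$, but $a \notin C^*$ forces $D \neq C^*$ while $e_{C^*} \subseteq D$, contradicting privateness; (b) symmetrically $b \sim u$ leads to a contradiction via $\{z,u,v,b\}$; (c) if $a \not\sim v$, $b \not\sim u$, and $a \sim b$, then $\{a,u,v,b\}$ induces a $C_4$, contradicting chordality by Theorem~\ref{thm-uu}(ii); (d) otherwise $a - u - v - b$ is an induced $P_4$ and $z$ is adjacent to all four vertices, producing an induced gem, again contradicting Theorem~\ref{thm-uu}(ii).

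The principal obstacle is the Key Lemma. I would prove it by induction on $|V(G)|$ using the distance-hereditary characterization: any Ptolemaic graph admits a vertex $w$ that is either a pendant, a true twin of another vertex, or (in the restricted chordal setting) a false twin of a simplicial vertex. In each case the maximal cliques of $G$ decompose naturally in terms of those of $G - w$, and the inductive step reduces to checking that no IH-private edge is absorbed into a newly created maximal clique; the only delicate point is the false twin case, where one observes that the IH-private edge of a maximal clique of $G-w$ not containing $w$ cannot lie entirely inside the shared twin neighbourhood (otherwise it would already lie in two maximal cliques of $G-w$, contradicting the induction hypothesis), so it survives as a private edge in $G$.
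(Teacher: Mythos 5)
Your route is genuinely different from the paper's and, granted your Key Lemma, the rest of your argument is correct. The paper argues by contradiction: it replaces a hypothetical small cover $K$ by a cover $\mathcal M(K)$ of maximal cliques, deduces that some maximal clique $Y_0$ can be shrunk by a vertex $x$ with $K(G)$ remaining a cover, and then extracts an induced $C_4$ or gem from the cover elements meeting $Y_0$ at $x$. You instead fingerprint each maximal clique by a private edge, which immediately yields an injection of $K(G)$ into any edge clique cover $K$ (so the bound $|K|\ge |K(G)|$ comes for free, with no contradiction argument), and your four-way case analysis for the uniqueness part is complete: the witnesses $a,b$ exist by maximality of $C',C''$, the possible coincidence $a=b$ is silently absorbed into case (a) because $b\sim v$ always holds, and every branch terminates in a violation of privateness, chordality or gem-freeness --- the same dichotomy from Theorem~\ref{thm-uu}(ii) that the paper exploits. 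Your approach has the merit of isolating the inequality $\mathrm{ecc}(G)\ge |K(G)|$ as a clean consequence of a reusable structural fact about Ptolemaic graphs.

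The one real gap is that the Key Lemma carries all the weight and you only sketch it, via a distance-hereditary elimination ordering whose twin cases you yourself flag as delicate. The lemma is true, and it admits a short direct proof that avoids the induction entirely. For a maximal clique $C$, an edge of $C$ fails to be private exactly when both its endpoints lie in $N(w)\cap C$ for some $w\notin C$. Each such set is a proper subset of $C$ (by maximality of $C$), and the family of those sets of size at least $2$ is laminar: if $N_1=N(w_1)\cap C$ and $N_2=N(w_2)\cap C$ crossed, then picking $a\in N_1\setminus N_2$, $b\in N_2\setminus N_1$ and $x\in N_1\cap N_2$ gives an induced $C_4$ on $\{w_1,a,b,w_2\}$ when $w_1\sim w_2$, and an induced gem on $\{w_1,a,b,w_2,x\}$ with centre $x$ otherwise, contradicting Theorem~\ref{thm-uu}(ii). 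A laminar family of proper subsets of $C$ cannot cover all pairs from $C$: its maximal members are pairwise disjoint proper subsets, so an edge with one endpoint in such a member and one outside it (or any edge at all, if the family is empty) is private. This closes the gap using only the chordal-plus-gem-free characterization you already invoke; alternatively, the clique laminar structure of \cite{UU09} cited in Theorem~\ref{thm-uu}(iii) delivers the same conclusion. With the Key Lemma proved this way, your argument stands as a valid and arguably cleaner alternative to the paper's proof.
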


\begin{proof}
	Note first that we may assume that $G$ is not an isolated edge as otherwise the theorem trivially holds.
	Suppose for contradiction that there exists an edge clique cover $K$ of $G$ 
		distinct from $K(G)$ such that $|K|\leq |K(G)|$. Without loss of generality, we 
		may assume that $K$ has minimum size. 	For all $Y \in K$, pick some maximal 
	clique $m(Y)$ in $K(G)$ (which may be $Y$ itself) 
	such that $m(Y)$ contains $Y$. Note that
	we can clearly always pick some such clique $m(Y)$.
	Then the set $\mathcal M(K)=\{m(Y)\,:\, Y \in K\}\subseteq K(G)$  is an 
	edge clique cover of $G$, and we have $|\mathcal M(K)| \leq |K|$. Since, 
	by assumption, $|K|=\mathrm{ecc}(G)$, it follows that $|\mathcal M(K)| = |K|=\mathrm{ecc(G)}$.

	We claim that there exists $Y_0 \in K(G)$ and $x \in Y_0$ such 
	that the set $K_{(Y_0,x)}$ obtained from $K(G)$ by replacing $Y_0$ with $Y_0-\{x\}$ 
	in case $|Y_0|>2$, or removing $Y_0$ from $K(G)$ in case $|Y_0|=2$, is an 
	edge clique cover of $G$. To see this, we distinguish between the cases that $|K|=|K(G)|$ and that $|K|<|K(G)|$.
	
	If $|K|=|K(G)|$, 
	then $\mathcal M(K)=K(G)$ as $\mathcal M(K)\subseteq K(G)$ and  $|\mathcal M(K)| = |K|$. Since
	$K \neq K(G)$ by assumption, 
	there exists $Y_0 \in K(G)$ such that $Y_0 \notin K$. In view of $\mathcal M(K)=K(G)$ it follows that
	$Y_0$ is of the form $m(Y)$ for some $Y \in K$. In particular, $Y \subsetneq Y_0$ holds. 
	Choose some $x\in Y_0-Y$. Then the definition of $K_{Y_0,x}$ implies that
	all sets of $K_{(Y_0,x)}$ are supersets of some set in $K$. Hence, $K_{(Y_0,x)}$ 
	is an edge clique cover of $G$.
	
	If $|K|<|K(G)|$, then $\mathcal M(K)$ is a proper 
	subset of $K(G)$. So for all $Y_0 \in K(G)-\mathcal M(K)$ and all $x \in Y_0$, 
	the set $K_{(Y_0,x)}$ contains $\mathcal M(K)$. Since $\mathcal M(K)$ is an edge 
	clique cover of $G$, it follows that $K_{(Y_0,x)}$ is also such a cover. This completes the proof of the claim.
	
	We next show that $G$ contains a $C_4$ or a gem. To this end, suppose that $Y_0 \in K(G)$ and $x \in Y_0$ are such that $K_0=K_{(Y_0,x)}$ is an 
	edge clique cover of $G$. 
	Let $Y_1 \in K_0$ such that $Y_1 \cap Y_0$ contains at least two 
	elements one of which is $x$. 
	Note that such a set $Y_1$ always exists since $Y_0$ is a clique in $G$ and $K_0$ is an edge clique cover of $G$.
    Without loss of generality, we may assume that $Y_1$ is such that no $Y' \in K$ 
	distinct from $Y_1$ satisfies $Y_1 \cap Y_0 \subset Y' \cap Y_0$. Since 
	$Y_0\in K(G)$, we have, $Y_1 \cap Y_0 \neq Y_0$. Hence, there exists $z \in Y_0$ such that $z \notin Y_1$.
	Since $x \in Y_1$ and $z \notin Y_1$, we have $z \neq x$. Furthermore, since
	$x,z \in Y_0$ and $Y_0$ is a clique in $G$ it follows that $\{x,z\}$ is an edge in $G$. 
	Hence, since $K_0$ is an edge clique cover of $G$, there exists $Y_2 \in K_0$ such that $x,z \in Y_2$.
	Moreover, by the choice of $Y_1$,  there exists $y \in Y_1 \cap Y_0$ such that $y \notin Y_2$, since  
	otherwise, $Y_1 \cap Y_0 \subset Y_2 \cap Y_0$.
	
	Consider now an element $u \in Y_1$ such that $\{u,z\}$ is not an edge of $G$.
	Note that such an element always exists, since $z \notin Y_1$ together with the maximality 
	of $Y_1$ implies that $Y_1 \cup \{z\}$ cannot be a clique in $G$. Note also that 
	since $y,z \in Y_0$, we have that $\{y,z\}$ is an edge in $G$ because $Y_0$ is a clique in $G$. Hence, $u \neq y$. 
	Since $\{x,z\}$ is an edge in $G$, we have  $u \neq x$.
	Similarly, there exists $v \in Y_2$ such that $\{v,y\}$ is not an edge of $G$.
	Note that $v \neq u, z$ since if $v=z$ then $\{v,y\}$ is an edge in $G$
	as $z,y\in Y_0$ and $Y_0$ is a clique in $G$, and if $v=u$ then $\{v,y\}$ is an edge in $G$
as $u,y\in Y_1$ and $Y_1$ is a clique in $G$.

	Now, if $\{u,v\}$ is an edge in $G$, then the set $\{u,y,z,v\}$ is a $C_4$ in $G$, since 
	$y,u \in Y_1$, $y,z\in Y_0$, and $z,v \in Y_2$ imply that $\{y,u\}$, $\{y,z\}$, and $\{z,v\}$ are edges in $G$
	as $Y_1$, $Y_0$,  and $Y_2$ are cliques in $G$, respectively. But then $G$ is not chordal since, as shown above, neither $\{u,z\}$ not $\{y,v\}$ can be an edge in $G$.
	Otherwise, the set $\{u,y,z,v,x\}$ induces a gem in $G$
	since $\{y,z\}$ is an edge in $G$ and $x,y \in Y_0$, $x,u \in Y_1$, and $x,v, z \in Y_2$ imply that  $\{x,y\}$, $\{x,u\}$, $\{v,z\}$, and $\{x,v\}$ 
	are also edges in $G$. In either case, it follows by Theorem~\ref{thm-uu} that $G$ is not Ptolemaic, a contradiction.
\end{proof}

Note that the converse of Theorem~\ref{lm-prime} is not true in general, that is, there exists graphs $G$ that 
are not Ptolemaic and are such that $K(G)$ is the only minimum size edge clique cover of $G$. 
This is the case, for example, if $G$ is isomorphic to $C_k$, $k \geq 4$.

\section{Arboreal representations}
\label{sc-abrep}

In this section, we characterize \emph{arboreal-representable} graphs, that is, 
graphs $G$ for which there exists an 
arboreal network $N$ on $X$ that represents $G$. We begin by considering some properties 
of the shared ancestry graph of an arboreal network.

\begin{lemma}\label{lm-tri}
Let $N$ be an arboreal network. Then:
\begin{itemize}
\item[(i)] if $N$ contains a non-root vertex of outdegree $2$ or more, then $\mathcal A(N)$ contains a $C_3$.
\item[(ii)] if $N$ has a vertex of outdegree $3$ or more, then $\mathcal A(N)$ contains a $C_3$.
\end{itemize}
\end{lemma}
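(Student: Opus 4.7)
For both parts the plan is to exhibit three pairwise distinct leaves $x_1,x_2,x_3\in X$ together with vertices of $N$ witnessing each of the three edges $\{x_i,x_j\}$ of $\mathcal A(N)$. The central tool I will use, which follows immediately from the arboreal hypothesis, is the following \emph{tree property}: for any vertex $w$ of $N$ and any two distinct children $u,u'$ of $w$, the sets of descendants of $u$ and $u'$ in $N$ are disjoint. Indeed, a common descendant $y$ would yield two distinct directed $w$-to-$y$ paths in $N$ and hence two distinct paths in the underlying undirected tree, contradicting uniqueness of paths in a tree. Combined with the fact that every non-leaf vertex of $N$ has a leaf descendant (walk downward in the DAG), this lets me extract distinct leaves from the subtrees below distinct siblings.

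For (i), take $v$ non-root with outdegree at least $2$. I would first pick two distinct children $c_1,c_2$ of $v$ and leaf descendants $x_1$ of $c_1$ and $x_2$ of $c_2$; the tree property at $v$ forces $x_1\neq x_2$, and $v$ itself is a common ancestor of $x_1$ and $x_2$, so $\{x_1,x_2\}$ is an edge of $\mathcal A(N)$. For the third leaf, I would walk upward from $v$ along some chain of parents until reaching a root $r$ of $N$; such an $r$ exists because $v$ is not a root and $N$ is finite and acyclic. Let $u$ be the child of $r$ lying on the chosen directed $r$-to-$v$ path. Since $r$ has outdegree at least $2$, I can choose a second child $c'\neq u$ of $r$ and a leaf descendant $x_3$ of $c'$. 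The tree property applied at $r$ to $u$ and $c'$ forces the descendants of $c'$ to be disjoint from those of $u$, and since $v$ is a descendant of $u$ and $x_1,x_2$ are descendants of $v$, this yields $x_3\notin\{x_1,x_2\}$. Finally $r$ is a common ancestor of the pairs $\{x_1,x_3\}$ and $\{x_2,x_3\}$, so both are edges of $\mathcal A(N)$, closing a $C_3$ on $\{x_1,x_2,x_3\}$.

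Part (ii) I would then deduce by cases. If the outdegree-at-least-$3$ vertex is not a root, then it has outdegree at least $2$ and part (i) already produces a $C_3$. If it is a root $v$, I pick three distinct children $c_1,c_2,c_3$, extract leaf descendants $x_1,x_2,x_3$, observe that the three pairwise applications of the tree property at $v$ force these leaves to be pairwise distinct, and note that $v$ witnesses all three edges at once.

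No step strikes me as a real obstacle. The only places demanding any care are verifying the tree property and the termination of the upward walk, both of which are immediate from finiteness, acyclicity, and the hypothesis that the underlying graph of $N$ is a tree. I expect the write-up to be essentially bookkeeping around these two observations.
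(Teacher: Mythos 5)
Your proof is correct and follows essentially the same route as the paper: both rest on the observation that distinct children of a vertex have disjoint sets of leaf descendants, use a root above $v$ to supply the third leaf in part (i), and use three children directly in part (ii). The only difference is cosmetic: you justify the sibling-disjointness claim directly from uniqueness of paths in the underlying tree, whereas the paper deduces it from the absence of $1$-alternating cycles via Proposition~\ref{pr-arbalt}.
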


\begin{proof}
To help establish Assertions~(i) and (ii), we first make the following claim. If 
$v$ is a vertex of $N$ with outdegree $k \geq 2$ then  $|C(v)|\geq k$. To see this, let $v$ be such a vertex.
Let $w$ and $w'$ be two distinct children of $v$. 
If $C(w) \cap C(w') \neq \emptyset$, then there exists a hybrid vertex $h$ of $N$ that is 
a descendant of both $w$ and $w'$. Assuming without loss of generality that 
no strict ancestor of $h$ also enjoys this property, it 
follows that $v,h$ is a 
1-alternating cycle in $N$. By Proposition~\ref{pr-arbalt}, this is impossible since $N$ is arboreal.
Hence, $C(w) \cap C(w') = \emptyset$ holds for any two distinct children $w,w'$ of $v$. 
Since, by assumption, $outdeg(v)\geq k$ the claim follows.

(i) Suppose that $v\in V(N)$  but not a root. Let $r$ be a root of $N$ that is an ancestor of $v$. 
By the previous claim, $|C(v)|\geq 2$. The same reasoning also implies that, there 
is an element $x \in X$ that is a descendant of $r$ but not of $v$. Since $C(v) \subseteq C(r)$, it 
follows that $|C(r)|\geq 3$. Since $C(r)$ is a clique in $\mathcal A(N)$ it follows 
that $\mathcal A(N)$ contains a $C_3$.

(ii) If $v$ has outdegree $3$ or more, then by the previous claim, $C(v)$ contains at 
least three elements. Since $C(v)$ is a clique in $\mathcal A(N)$ it follows that $\mathcal A(N)$ contains a $C_3$.
\end{proof}

\begin{lemma}\label{lm-acyclic}
Let $N$ be an arboreal network. Then $\mathcal A(N)$ is acyclic 
if and only if all vertices of $N$ have outdegree at most $2$, and the only vertices of $N$ 
with outdegree $2$ are the roots of $N$.
\end{lemma}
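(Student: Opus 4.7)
The plan is to prove both directions separately. The forward direction will follow almost directly from Lemma~\ref{lm-tri}: if $\mathcal{A}(N)$ is acyclic then it contains no $C_3$, so by Lemma~\ref{lm-tri}(i) no non-root vertex of $N$ has outdegree $\geq 2$, and by Lemma~\ref{lm-tri}(ii) no vertex of $N$ has outdegree $\geq 3$. This immediately gives the stated outdegree constraints.

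For the backward direction, assume all vertices of $N$ have outdegree at most $2$ and only roots attain outdegree $2$. The key structural observation I would establish first is that under this hypothesis, $|C(v)| = 1$ for every non-root vertex $v$ and $|C(r)| = 2$ for every root $r$. Indeed, a non-root vertex $v$ has outdegree at most $1$; if $v$ is a leaf, $|C(v)|=1$, and otherwise $v$ has a unique child and following the unique outgoing arc downwards (which terminates at a leaf by acyclicity and finiteness of $N$) shows $C(v)$ is a singleton. For a root $r$ with children $w_1, w_2$, the disjointness $C(w_1) \cap C(w_2) = \emptyset$ used in the proof of Lemma~\ref{lm-tri}(i) (via the fact that $N$, being arboreal, contains no $1$-alternating cycles) together with $|C(w_i)|=1$ gives $|C(r)|=2$.

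Since every vertex $v$ of $N$ satisfies $|C(v)| \leq 2$, each edge of $\mathcal{A}(N)$ must be of the form $C(r)$ for some root $r$, so $|E(\mathcal{A}(N))| \leq r(N)$. The main computation is then to show $r(N) = |X|-1$. I would do this by a double arc-count: partitioning $V(N) = R(N) \sqcup H(N) \sqcup X$ (since any non-root non-leaf vertex has outdegree $1$, hence indegree $\geq 2$, hence is hybrid), and using that the underlying graph of $N$ is a tree with $|V(N)|-1$ edges, the equality $2r(N)+|H(N)| = |E(N)| = r(N)+|H(N)|+|X|-1$ yields $r(N)=|X|-1$. (Alternatively one could combine the arc-count with $\tilde{h}(N)=r(N)-1$ from Proposition~\ref{pr-arbalt}.) Since $\mathcal{A}(N)$ is a connected graph on $|X|$ vertices with at most $|X|-1$ edges, it must be a tree, hence acyclic.

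The forward direction is immediate; the main obstacle is the counting step in the backward direction, specifically establishing the identity $r(N) = |X|-1$. The singleton/pair description of the sets $C(v)$ is the conceptually crucial ingredient that makes the arc-counting possible, and care must be taken to justify disjointness of the $C$-sets of siblings using arborality before concluding.
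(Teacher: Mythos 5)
Your proof is correct, and the forward direction is identical to the paper's (both just quote Lemma~\ref{lm-tri}). For the converse, however, you take a genuinely different route. Both arguments start from the same structural observation — under the outdegree hypotheses every non-root vertex satisfies $|C(v)|=1$, every root $r$ satisfies $|C(r)|=2$ (using arboreality to get disjointness of the children's $C$-sets via the absence of $1$-alternating cycles), and hence the edges of $\mathcal A(N)$ inject into $R(N)$. From there the paper argues by contradiction: a cycle $x_1,\dots,x_k,x_1$ in $\mathcal A(N)$ would yield roots $r_i$ with $C(r_i)=\{x_i,x_{i+1}\}$ and hybrid vertices $h_i$ on the paths to the shared leaves, assembling into a $k$-alternating cycle and contradicting Proposition~\ref{pr-arbalt}(iii). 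You instead avoid any explicit forbidden substructure and run an Euler-type count: the partition $V(N)=R(N)\sqcup H(N)\sqcup X$ (valid because every non-root internal vertex has outdegree $1$ and hence indegree at least $2$) together with the fact that the underlying tree has $|V(N)|-1$ edges gives $r(N)=|X|-1$, so the connected graph $\mathcal A(N)$ has at most $|X|-1$ edges and must be a tree. Your counting argument is arguably more self-contained (it only uses arboreality through the tree edge count and the disjointness of sibling $C$-sets) and yields the extra quantitative fact $r(N)=|X|-1$; the paper's argument is more in the spirit of its other proofs, which repeatedly exhibit $k$-alternating cycles as the obstruction. Both are complete.
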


\begin{proof}
Assume first that $\mathcal A(N)$ is acyclic. In particular, $\mathcal A(N)$ 
does not contain a $C_3$. 
By Lemma~\ref{lm-tri}, it follows that all vertices in $N$ have outdegree at most $2$, 
and the only vertices in $N$ with degree $2$ are the roots of $N$.

Conversely, assume that all vertices in $N$ have outdegree at most $2$, and 
the only vertices in $N$ with outdegree $2$ are the roots of $N$. 
Then a vertex in $N$ must either be a root, a hybrid vertex, or a leaf. Since $N$ 
	is arboreal and so cannot contain a root $r$ and  some $x\in C(r)$ such that 
	there exists a directed path from $r$ to $x$ that contains two hybrid vertices of $N$, 
	it follows that $|C(r)|=2$. Hence, there exists a bijection between the roots of $N$ 
and the edges of $G$. Assume now for contradiction that $G$ 
contains a cycle $x_1, \ldots, x_k, x_{k+1}=x_1$, $k\geq 2$. Then for all $1 \leq i \leq k$, there 
exists a root $r_i$ in $N$ such that $C(r_i)=\{x_i,x_{i+1}\}$ in view of the aforementioned bijection. In particular, 
for all $1\leq i\leq k$ there 
exists a hybrid vertex $h_i$ that is common to the directed path from $r_i$ to $x_i$ 
and the directed path from $r_{i+1}$ to $x_i$. Without loss of generality, we may assume 
that no strict ancestor of $h_i$ belongs to both these paths. Hence, 
the sequence $r_1, h_1, r_2, \ldots, r_k,h_k$ is a $k$-alternating cycle in $N$.
By Proposition~\ref{pr-arbalt}, this is impossible since $N$ is arboreal. 
Hence, $\mathcal A(N)$ is acyclic as claimed.
\end{proof}

We now use Lemmas~\ref{lm-tri} and \ref{lm-acyclic} to relate the shared 
ancestry graph of an arboreal network with the Ptolemaic property. 
To help with this, we require a further concept.
For $N$ a network on $X$ and $Y$ a proper subset of $X$ with $|Y|\geq 2$, we define the 
restriction of $N$ to $Y$ to be the network $N'$ obtained from $N$ 
by first removing all leaves in $X-Y$ and their pendant arcs, then successively removing 
resulting vertices of outdegree $0$ (and their incoming arcs) and vertices of indegree $0$ 
and outdegree $1$ (and their outgoing arcs), and, finally, suppressing vertices of indegree and 
outdegree $1$, until no such vertices remain. For example, 
the restriction of the network depicted in Figure~\ref{fig-intro}(ii) to $Y=\{1,2,3,4\}$
is a rooted tree in which the arcs containing $1$ and $2$ share a vertex
and also the arcs containing $3$ and $4$.

We now show that the shared ancestry graph of an arboreal network is Ptolemaic.

\begin{proposition}\label{pr-ptol}
If $N$ is an arboreal network, then $\mathcal A(N)$ is Ptolemaic.
\end{proposition}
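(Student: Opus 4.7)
I will invoke the equivalence (i)$\Leftrightarrow$(ii) of Theorem~\ref{thm-uu}: $\mathcal A(N)$ is Ptolemaic if and only if it is chordal and gem-free. A key enabling observation (which I verify at the start) is that for any $Y\subseteq X$ the restriction $N'=N|_Y$ of an arboreal network $N$ is again arboreal---any $k$-alternating cycle in $N'$ would lift to one in $N$, so Proposition~\ref{pr-arbalt} applies---and that $\mathcal A(N')=\mathcal A(N)[Y]$. It therefore suffices to show that for any arboreal $N'$, $\mathcal A(N')$ is neither a $C_k$ ($k\ge 4$) nor a gem. The chordal case is quick: if $\mathcal A(N')=C_k$ with $k\ge 4$, then $\mathcal A(N')$ contains no triangle, so the contrapositives of Lemma~\ref{lm-tri}(i) and (ii) force every vertex of $N'$ to have outdegree at most $2$ with outdegree $2$ confined to the roots; Lemma~\ref{lm-acyclic} then makes $\mathcal A(N')$ acyclic, contradicting the cycle in $C_k$.

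\textbf{Gem-freeness set-up.} Label a putative gem in $\mathcal A(N')$ so that $e$ is adjacent to all of $a,b,c,d$ and $a$-$b$-$c$-$d$ is the induced $P_4$; the three maximal cliques are $\{a,b,e\},\{b,c,e\},\{c,d,e\}$. Since $N'$ is arboreal it has no $3$-alternating cycle, so Lemma~\ref{lm-clique} supplies common ancestors $u,v,w$ of these cliques. A short ancestor-bookkeeping check (for example, if $u$ were an ancestor of $v$ then $u$ would be a common ancestor of $\{a,c\}$, yielding the missing edge $\{a,c\}$) shows $u,v,w$ are pairwise incomparable and distinct. Because $N'$ is arboreal, the directed paths $u\to e,\,v\to e,\,w\to e$ are unique (they are the unique undirected paths in the underlying tree), and their union $S$ is a subtree of the underlying tree of $N'$ with exactly four degree-$1$ endpoints, namely $u,v,w,e$. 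Simple tree arithmetic leaves four possibilities for the shape of $S$: either a single degree-$4$ branching vertex $j$ (the $4$-way junction), or two degree-$3$ branching vertices $j_1,j_2$ arranged in one of three quartet topologies $(uv|we),(uw|ve),(vw|ue)$, indexed by which pair of $\{u,v,w\}$ first converges toward $e$. In each case the branching vertices are hybrid, because at least two directed paths converge there.

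\textbf{Producing the $3$-alternating cycle.} Let $h_b$ and $h_c$ be the (also hybrid) first-meeting vertices of the paths from $\{u,v\}$ into $b$ and from $\{v,w\}$ into $c$ respectively. I will exhibit a $3$-alternating cycle on six pairwise distinct vertices in each of the four topologies: in the $4$-way case, use $u,h_b,v,h_c,w,j$; in the $(uv|we)$ topology, use $u,j_1,v,h_c,w,j_2$; in the $(uw|ve)$ topology, use $u,j_1,w,j_2,v,h_b$; and in the $(vw|ue)$ topology, use $v,j_1,w,j_2,u,h_b$. The required internal-vertex disjointness of each converging-path pair follows from the first-meeting definition of the relevant hybrid, and the six vertices in each cycle are pairwise distinct by the same ancestor-bookkeeping used earlier (no $j$-vertex can be an ancestor of any leaf in $\{a,b,c,d\}$, else one of $u,v,w$ acquires a forbidden ancestor, whereas $h_b$ is an ancestor of $b$ and $h_c$ is an ancestor of $c$; also $h_b\ne h_c$, lest $u$ become an ancestor of $c$). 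Each such cycle contradicts Proposition~\ref{pr-arbalt}, completing the proof. The main obstacle is this last case analysis: in each of the four topologies of $S$ one must produce the appropriate $3$-alternating cycle and carefully certify both the distinctness of the six vertices and the internal disjointness of the three converging-path pairs.
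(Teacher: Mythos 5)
Your proof is correct, and for the harder half it takes a genuinely different route from the paper's. The chordality argument is essentially the paper's: both of you restrict $N$ to the vertices of a putative induced $C_k$, $k\ge 4$, and combine Lemma~\ref{lm-tri} with Lemma~\ref{lm-acyclic} to force a triangle. For gem-freeness the paper argues about the \emph{shape} of the underlying tree of the restriction $N'$ to the five gem vertices: it suppresses degree-$2$ vertices to obtain one of three possible $5$-leaf tree topologies and eliminates each by counting hybrid vertices and noting that the surviving shapes would force a degree-$1$ vertex in $\mathcal A(N')$, which a gem does not have. You instead take the three maximal cliques of the gem, use Lemma~\ref{lm-clique} (legitimate, since $N'$ is arboreal and hence has no $3$-alternating cycle by Proposition~\ref{pr-arbalt}) to obtain three pairwise incomparable common ancestors $u,v,w$, form the Steiner subtree $S$ of their paths to the apex $e$, and in each of the four possible topologies of $S$ exhibit an explicit $3$-alternating cycle, contradicting Proposition~\ref{pr-arbalt} directly. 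I checked the four cases: the junction vertices are indeed hybrid (two or three directed paths enter them along distinct tree-edges), the converging path pairs are internally disjoint (first-meeting choice for $h_b,h_c$; distinct branches of $S$ for the junctions), and the six vertices in each cycle are distinct by your ancestor-bookkeeping (a junction vertex cannot be an ancestor of any of $a,b,c,d$ without creating a forbidden edge such as $\{a,c\}$ or $\{b,d\}$). Your version costs more case analysis and distinctness checking, but it produces the forbidden substructure explicitly and makes transparent exactly which consequence of arboreality is used; the paper's version is shorter but leans on the ad hoc observation that a gem has no degree-$1$ vertex. Both are valid.
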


\begin{proof}
We first show that $G=\mathcal A(N)$ is chordal. Suppose 
for contradiction that $G$ contains an induced cycle $x_1, \ldots, x_k, x_{k+1}=x_1$, $k \geq 4$.
Let $Y=\{x_1, \ldots, x_k\}$, and let $N'$ be the restriction of $N$ to $Y$. 
Clearly, since $N$ is arboreal, $N'$ is arboreal. By definition, $\mathcal A(N')=G[Y]$ also holds.
So, by Lemma~\ref{lm-acyclic}, $N'$ must 
contain a vertex  with outdegree at least $2$ that is not a root, or 
one of the roots of $N'$ has outdegree $3$ or more. In both cases, 
it follows by Lemma~\ref{lm-tri} that $G[Y]$ contains a $C_3$, 
which contradicts the assumption that $Y$ induces a cycle in $G$ with 
length at least 4. Thus, $G$ is chordal.

Using Theorem~\ref{thm-uu} to complete the proof, we next show that $G$ 
does not contain a gem as an induced subgraph. To this
end, assume for contradiction that there exists a subset 
$Y=\{x,y,z,u,v\} \subseteq X$ such that $G[Y]$ is a gem. Let $N'$ be 
the restriction of $N$ to $Y$. Then similar arguments as before imply that $N'$ 
is arboreal and that $\mathcal A(N')=G[Y]$. 
Up to permutation in $Y$, we may assume that the edges of $G[Y]$ 
are $\{u,y\}$, $\{y,z\}$, $\{z,v\}$, $\{x,u\}$, $\{x,y\}$, $\{x,z\}$ and $\{x,v\}$.

Since, by definition, $N'$ represents $G[Y]$, it follows that $N'$ contains a root $r_1$ that is an 
ancestor of $u$ and $y$, a root $r_2$ that is an ancestor of $y$ and $z$, and a root $r_3$ 
that is an ancestor of $z$ and $v$. Note that since neither $\{u,z\}$ nor $\{v,y\}$ are 
edges of $G[Y]$, the roots $r_1$, $r_2$ and $r_3$ are pairwise distinct. In particular, 
there exists a hybrid vertex $h_y$ (\emph{resp.} $h_z$) in $N'$ that is a 
descendant of both $r_1$ and $r_2$ (\emph{resp.} $r_2$ and $r_3$), and 
no strict ancestor of $h_y$ (\emph{resp.} $h_z$) enjoys this property. 
Note that $h_y$ and $h_z$ are incomparable in $N'$. 

Now, since $N'$ is arboreal, the underlying undirected graph of $N'$ is a tree. 
Suppressing all vertices in this tree with 
degree 2, results in a tree $T$ with leaf set $\{x,y,z,u,v\}$
which either (i) has a single internal vertex with degree 5, 
(ii) two internal vertices, one with degree 3 and one with degree 4, or 
(iii) three internal vertices each with degree 3.

We now show that each of these cases leads to a contradiction, which will 
complete the proof. Case (i) is impossible, since each hybrid vertex in $N'$ corresponds to an
internal vertex in $T$ (since in the tree underlying $N'$ it has degree at least 3), 
and there are at least two hybrid vertices in $N'$.
In Case (ii), each of the two internal vertices in $T$ with degree greater than 2 must correspond to hybrid 
vertex in $N'$ which, in particular, implies that one of the leaves adjacent 
to the internal degree 3 vertex in $T$ corresponds to a vertex with degree 1 in $\mathcal A(N')$, which 
is impossible as  $\mathcal A(N')$ is a gem. Finally, in Case (iii), note that 
at least one of the two vertices in $T$ with degree 3 that are adjacent to two leaves in $T$ must be a hybrid vertex
as there are at least two hybrid vertices in $N'$. But, as in Case (ii), this implies that there must be
a vertex of degree 1 in $\mathcal A(N')$ which is impossible. This completes the proof of
the proposition.
\end{proof}

We are now ready to characterize arboreal-representable graphs.

\begin{theorem}\label{thm-wide}
Let $G$ be a connected graph with vertex set $X$. The
following statements are equivalent:
\begin{itemize}
\item[(i)] $G$ is Ptolemaic.
\item[(ii)] The underlying graph of $H(K(G))$ is acyclic.
\item[(iii)] $N(K(G))$ is arboreal.
\item[(iv)] $G$ is arboreal representable.
 \item[(v)] $G$ is arboreal representable by a network with $ecc(G)=|K(G)|$ roots.
\end{itemize}
\end{theorem}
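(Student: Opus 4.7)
My plan is to prove the cycle of implications $(\text{i})\Leftrightarrow(\text{ii})\Rightarrow(\text{iii})\Rightarrow(\text{v})\Rightarrow(\text{iv})\Rightarrow(\text{i})$. The first equivalence is already supplied by Theorem~\ref{thm-uu}, the implication $(\text{v})\Rightarrow(\text{iv})$ is immediate from the definition, and $(\text{iv})\Rightarrow(\text{i})$ is obtained by applying Proposition~\ref{pr-ptol} to any arboreal network witnessing the arboreal representability of~$G$.

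For $(\text{iii})\Rightarrow(\text{v})$, I would first invoke Theorem~\ref{thm-unirep}: the network $N(K(G))$ represents $G$, and because the maximal cliques in $K(G)$ are pairwise non-nested, that theorem also gives $R(N(K(G)))=K(G)$, so $N(K(G))$ has exactly $|K(G)|$ roots. Assuming in addition that $N(K(G))$ is arboreal, Proposition~\ref{pr-ptol} implies $G$ is Ptolemaic, and Theorem~\ref{lm-prime} then gives $\mathrm{ecc}(G)=|K(G)|$, establishing~(v).

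The main effort lies in $(\text{ii})\Rightarrow(\text{iii})$. Writing $\mathcal{C}:=\mathcal{C}(K(G))$, I would trace through the construction of $N(K(G))=N(\mathcal{C})$ from the cover digraph $H(\mathcal{C})$ and show that each augmentation step only appends pendant edges to the underlying undirected graph. The key claim is that for every $x\in X$ the vertex $\{x\}$ has indegree exactly one after the leaf-attachment step: if two distinct $A_1,A_2\in\mathcal{C}$ both sent an arc to $\{x\}$, then $A_1\cap A_2$ would be a nonempty member of $\mathcal{C}$ containing $x$ (since $\mathcal{C}$ is closed under nonempty intersections of members of $K(G)$) that is either equal to one of the $A_i$ or a proper subset of both; in either case, some child of $A_1$ or $A_2$ in $H(\mathcal{C})$ would also contain $x$, contradicting the arc-creation rule. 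With this indegree-one property in hand, the leaf-attachment, the subsequent insertion of a child under any outdegree-$0$, indegree-$\geq 2$ vertex, and the final identification of leaves with $X$ each add only pendant edges to the underlying graph. Combined with the acyclicity furnished by~(ii) and the connectedness argument already used in the proof of Theorem~\ref{thm-unirep}, this forces the underlying graph of $N(K(G))$ to be a tree, so $N(K(G))$ is arboreal.

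The main obstacle I expect is the careful bookkeeping of the three-stage construction of $N(\mathcal{C})$, particularly verifying that the final leaf-identification step is bijective between the leaves of the augmented digraph and $X$; if two distinct leaves were identified with the same element of $X$, the resulting merger could fuse two previously distinct internal paths into a cycle and defeat the pendant-edge argument. The indegree-one lemma above is precisely what rules this out.
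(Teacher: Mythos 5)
Your proposal is correct and follows essentially the same route as the paper: the same chain of implications, with (i)$\Leftrightarrow$(ii) from Theorem~\ref{thm-uu}, (iv)$\Rightarrow$(i) and the Ptolemaic input to (iii)$\Rightarrow$(v) from Proposition~\ref{pr-ptol} together with Theorems~\ref{thm-unirep} and~\ref{lm-prime}, and the same key claim for (ii)$\Rightarrow$(iii), namely that closure of $\mathcal C(K(G))$ under nonempty intersections forces each newly attached leaf $\{x\}$ to have a unique parent, so every augmentation step only appends pendant edges. The paper additionally records the easy converse (iii)$\Rightarrow$(ii) by noting $H(K(G))$ sits inside $N(K(G))$, but your one-directional cycle already yields the full equivalence.
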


\begin{proof}
To ease notation, we put $N=N(K(G))$, $H=H(K(G))$, and 
	$\mathcal C=\mathcal C(K(G))$. Note that the equivalence of
(i) and (ii) holds by Theorem~\ref{thm-uu}.
We now show that (ii) and (iii) are equivalent.

Suppose first that (iii) holds, i.e. $N$ is arboreal. Since 
$N$ is constructed from $H$ by adding new arcs and vertices, it follows that 
$H^+$ is a subgraph of $N$,. Hence, the underlying 
 graph of $H$ is acyclic. Thus, (ii) holds.

Conversely, suppose that (ii) holds, i.e. the underlying graph of $H$ is acyclic. 
Let $H^+$ be the graph obtained within the construction of $N$ from $H$ by adding, 
for all $x \in X$ such that $\{x\} \notin \mathcal C$, a new vertex $\{x\}$ 
with outdegree $0$ and with parents all the sets $A \in \mathcal C$ 
that contain $x$ and are such that no child of $A$ in $H$ contains $x$. 
This operation creates a cycle in the underlying graph 
of $H^+$ if and only if $H$ has two or more vertices $A$ and $B$
containing $x$ such that no child of $A$ in $H$ and no child of $B$ in $H$ contains $x$. We 
claim that this cannot be the case.

Indeed, suppose for contradiction that $\mathcal C$ contains two elements $A,B$ 
such that $A$ and $B$ contain $x$, and no child of $A$ and no child of $B$ in $H$ 
contains $x$. Since $A,B \in \mathcal C$, their choice implies that
there exists $S_A, S_B \subseteq K(G)$ distinct 
such that $A=\bigcap_{Y \in S_A} Y$ and $B=\bigcap_{Y \in S_B} Y$. 
In particular, we have $A \cap B=\bigcap_{Y \in S_A \cup S_B} Y$. Since $S_A \cup S_B \subseteq K(G)$, $A \cap B \in \mathcal C=V(H)$ follows by definition of $\mathcal C$. 
By definition of $H$, $A \cap B$ is a descendant of $A$ and $B$ in $H$. Since 
$x \in A \cap B$, we obtain a contradiction. This completes the proof of the claim.

It follows that the underlying graph of $H^+$ is acyclic. 
Since $N$ is obtained from $H^+$ by adding a 
new child to each vertex of $H^+$ with outdegree $0$ and indegree $2$ or more,
this operation does not create a cycle in the underlying  graph of $H^+$. Hence
$N$ must be arboreal, i.e. (iii) holds.	

To complete the proof, first note that
$N$ represents $G$ by 
Theorem~\ref{thm-unirep} as $K(G)$ is an edge clique cover of $G$. Hence, (iii) implies (v) in view of Theorem~\ref{thm-unirep} and 
Theorem~\ref{lm-prime} since $\mathcal A(N)$ is Ptolemaic by Proposition~\ref{pr-ptol}. 
Moreover, that (v) implies (iv) is trivial, and that (iv) implies (i) 
follows immediately from Proposition~\ref{pr-ptol}.
\end{proof}

\section{Symbolic maps}
\label{sc-sym}

In this section, we characterize symbolic arboreal maps.
We begin by considering properties of ancestors in networks.

Let $N$ be a network on $X$. For $x, y \in X$ two distinct leaves of $N$, 
we say that $v \in V(N)$ is a \emph{least common ancestor} of $x$ and $y$ if $v$ is an ancestor of both $x$ and $y$, 
and no child of $v$ in $N$ enjoys this property. It is well-known that if $N$ is a 
phylogenetic tree, then any  two leaves of $N$ have a unique least common ancestor. 
As we have seen in Section~\ref{sec-sag}, in networks, two leaves do 
not necessarily have a least common ancestor. It is therefore of interest to understand when the uniqueness property holds for 
leaves that share an ancestor. The next result shows that this is always the case for arboreal networks.

\begin{proposition}\label{pr-lcau}
Let $N$ be a network on $X$. If $N$ does not contain a $2$-alternating cycle, 
then if $x,y\in X$ share an ancestor in $N$, then $x$ and $y$ have a unique least common ancestor in $N$. 
In particular, if 
$N$ is an arboreal networks, then the least common ancestor of two leaves sharing an ancestor is unique.
\end{proposition}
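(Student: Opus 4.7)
The plan is to argue by contradiction: assuming that $x,y\in X$ share an ancestor in $N$ but admit two distinct least common ancestors $v_1$ and $v_2$, I would construct a $2$-alternating cycle, contradicting the hypothesis on $N$.

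First I would show that $v_1$ and $v_2$ must be incomparable. Indeed, if $v_1$ were a strict ancestor of $v_2$, then the directed path from $v_1$ to $v_2$ starts with a child $w$ of $v_1$, and since $v_2$ is an ancestor of both $x$ and $y$ so is $w$; this contradicts $v_1$ being a \emph{least} common ancestor. By symmetry $v_2$ is not an ancestor of $v_1$ either, so $v_1$ and $v_2$ are incomparable and, in particular, distinct.

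Next, since both $v_1$ and $v_2$ are ancestors of $x$, I would fix directed paths $P_1^x$ from $v_1$ to $x$ and $P_2^x$ from $v_2$ to $x$. Because $v_1$ and $v_2$ are incomparable, these paths cannot share an internal vertex at their very start; but they must eventually coincide at $x$, so they have at least one common vertex. Let $h_x$ be the common vertex of $P_1^x$ and $P_2^x$ such that no strict ancestor of $h_x$ lies on both paths. Then $h_x$ has two incoming arcs on these paths coming from incomparable sources, hence $indeg(h_x)\geq 2$ and $h_x\in H(N)$, and the subpaths of $P_1^x, P_2^x$ from $v_1,v_2$ to $h_x$ are internally vertex-disjoint. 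Applying the same argument with $y$ in place of $x$ gives a hybrid vertex $h_y$ together with internally vertex-disjoint directed paths from $v_1$ and from $v_2$ to $h_y$.

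The sequence $v_1, h_x, v_2, h_y$ then satisfies the conditions of a $2$-alternating cycle in $N$, contradicting the standing assumption. Hence the least common ancestor of $x$ and $y$ is unique. The ``in particular'' statement follows immediately from Proposition~\ref{pr-arbalt}, since an arboreal network contains no $k$-alternating cycle for any $k\geq 1$, and in particular none for $k=2$. The only step that requires some care is the verification that $h_x$ (and $h_y$) can be chosen so that the two subpaths to it are internally vertex-disjoint; this is handled by the minimality in the choice of $h_x$ as the ``highest'' common vertex on $P_1^x$ and $P_2^x$.
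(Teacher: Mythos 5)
Your proof is correct and follows essentially the same route as the paper: assume two distinct least common ancestors $v_1,v_2$, extract ``highest'' common hybrid vertices $h_x$ and $h_y$ on the directed paths down to $x$ and to $y$, and exhibit the $2$-alternating cycle $v_1,h_x,v_2,h_y$. The only cosmetic difference is that you establish incomparability of $v_1$ and $v_2$ up front, whereas the paper works with the children of $v_1,v_2$ and explicitly notes that $y$ is not a descendant of $h_x$ --- a remark worth adding to your write-up since it guarantees $h_x\neq h_y$, and it follows immediately from the leastness you have already used.
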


\begin{proof}
	Let $N$ be an arboreal network on $X$ that does not contain a 2-alternating cycle.
Let $x,y \in X$ such that $x$ and $y$ share an ancestor in $N$. Then $x$ and $y$ clearly
have at least one least common ancestor in $N$. Assume for the following that $x\not=y$ 
	since otherwise the proposition trivially holds.

To see that there exists exactly one such vertex, assume for contradiction 
that there exists $v,w \in V(N)$ distinct such that both $v$ and $w$ enjoy 
the property that they are a least common ancestor of $x$ and $y$. Then
there exists two distinct children $v_x$ and $v_y$ of $v$ that are ancestors of $x$ and $y$ 
respectively, and two distinct children $w_x$ and $w_y$ of $w$ that are 
ancestors of $x$ and $y$, respectively. Since $v_x$ and $w_x$ are both 
ancestors of $x$
there must exist a hybrid vertex $h_x$ belonging 
to a directed path from $v_x$ to $x$ and a directed path from $w_x$ to $x$. Without loss of generality, 
we may choose $h_x$ such that no strict ancestor of $h_x$ enjoys this property. Clearly, $y$ 
is not a descendant of $h_x$ as otherwise $y$ is a descendant of $v_x$ and $w_x$ 
which contradicts the fact that $v$ and $w$ are least common ancestors of $x$ and $y$ in $N$. 
By symmetry, $v_y\not=w_y$. Hence,
there must also exist a vertex $h_y$ belonging to a directed path from $v_y$ to $y$ and a directed path from $w_y$ to $y$. 
Again, we may assume without loss of generality that no strict ancestor of $h_y$ enjoys
this property. Hence, $v,h_x,w,h_y$ is a $2$-alternating cycle in $N$, a contradiction. Thus,
$x$ and $y$ have a unique least common ancestor in $N$.
\end{proof}

Note that the converse of Proposition~\ref{pr-lcau} does not hold in general, 
since there exist networks $N$ on $X$ that contain $2$-alternating cycles, 
and are such that the least common ancestor of $x$ and $y$ is unique for all $x,y \in X$ 
that share an ancestor in $N$. For example, the phylogenetic network $N$ depicted in 
Figure~\ref{fig-intro}(iii) contains three 2-alternating cycles, but one can easily 
check that any pair of elements of $\{1,2,3\}$ has a unique least common ancestor in $N$. 

Assume for the rest of the paper that $M$ 
is a non-empty set and that $\odot\not\in M$. As in the introduction, we set $M^{\odot}= M \cup \{\odot\}$
and call a symmetric map $d: {X \choose 2} \to M^{\odot}$ a 
{\em symbolic map (on $X$)}.
Denoting for a network $N$ the set of all vertices 
with outdegree $2$ or more by $V(N)^-$,
we call a pair $(N,t)$ consisting of a network $N$ on $X$ 
and a map $t:V(N)^-\to M$ a \emph{labelled network (on $X$)}. In this case, we also call the
map $t$ a {\em labelling map (for $N$)}. 

For $N$ an arboreal network and $x,y$ two leaves of $N$ that share 
an ancestor, we denote by $\mathrm{lca}_N(x,y)$ the least common ancestor of $x$ and $y$ in $N$, 
which is well defined by Proposition~\ref{pr-lcau}. As mentioned in the 
introduction, every labelled arboreal network $(N,t)$ on $X$ induces a 
(unique) symbolic map $d_{(N,t)}:{X\choose 2}\to M^{\odot}$
which, for $\{x,y\} \in {X \choose 2}$, is defined by taking $d_{(N,t)}(x,y)=t(\mathrm{lca}_N(x,y))$ if $x$ and $y$
share an ancestor in $N$, and $d_{(N,t)}(x,y)= \odot$ else. We say that a labelled 
arboreal network $(N,t)$ on $X$ \emph{explains a symbolic map $d$ on $X$} if $d=d_{(N,t)}$, 
in which case, we call $d$ a \emph{symbolic arboreal map}. Note that these maps have a special
property in case $N$ is a phylogenetic tree:

\begin{lemma}\label{lm-dt}
	Let $(N,t)$ be a labelled arboreal network on $X$.
	Then $d_{(N,t)}(x,y) \not=\odot$ for all $\{x,y\} \in {X \choose 2}$ if and only if $N$ is a phylogenetic tree on $X$.
\end{lemma}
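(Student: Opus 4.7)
The forward direction is immediate: if $N$ is a phylogenetic tree on $X$, its unique root is an ancestor of every leaf of $N$, so every $\{x,y\} \in {X \choose 2}$ has a common ancestor and hence $d_{(N,t)}(x,y) \in M$. For the converse I will argue by contrapositive, assuming $N$ is not a phylogenetic tree and producing two leaves without a common ancestor. By Lemma~\ref{lm-rh1} the hypothesis gives $H(N) \neq \emptyset$, so I fix a hybrid vertex $h$ with two distinct parents $p_1, p_2$. Since $N$ is arboreal its underlying undirected graph $T$ is a tree, and deleting $h$ yields a forest whose components containing $p_1$ and $p_2$ are distinct; call them $T_1$ and $T_2$. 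Each $p_i$ has outdegree at least $1$ in $N$ (an arc to $h$) and so is not a leaf of $T$, which forces $|T_i| \geq 2$; hence $T_i$ has at least two leaves as a subtree, at most one of which is $p_i$, so $T_i \cap X \neq \emptyset$ for $i \in \{1,2\}$. Choose $x_i \in T_i \cap X$; I will show $x_1, x_2$ share no ancestor in $N$, contradicting $d_{(N,t)}(x_1,x_2) \neq \odot$.

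The core of the argument is a trapping property of $h$: any simple directed path in $N$ that traverses $h$ enters via some arc $p_i \to h$ from a parent and leaves via some arc $h \to c$ to a child, and afterwards it is confined to the component $T_c$ of $T-h$ containing $c$ — escaping $T_c$ in the tree $T$ would require the edge $\{h,c\}$ to be used a second time and so $h$ to be revisited, violating the acyclicity of $N$. Acyclicity also forbids a vertex from being simultaneously a parent and a child of $h$, so ``parent-components'' and ``child-components'' of $T-h$ form disjoint classes and both $T_1, T_2$ are parent-components. Suppose for contradiction that $a$ is a common ancestor of $x_1, x_2$ with directed paths $P_1 \colon a \to x_1$ and $P_2 \colon a \to x_2$, and let $a'$ be the last vertex at which $P_1$ and $P_2$ still coincide. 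A case analysis on the location of $a'$ (equal to $h$; in $T_1$; in $T_2$; or in some other component of $T - h$) uses the trapping property to force one of $P_1, P_2$ to end inside a child-component, while its endpoint $x_i$ lies in the parent-component $T_i$ — a contradiction.

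The main obstacle is a clean, uniform handling of the case analysis: for each possible location of $a'$ one has to identify which of $P_1, P_2$ is forced to cross $h$, verify that it then exits into a child-component distinct from both $T_1$ and $T_2$, and rule out the subcase where $a'$ itself lies in a child-component (in which case neither path can reach a parent-component at all, since escaping would require traversing the edge $\{h, c\}$ against its arc). Once the trapping property is isolated cleanly, each individual case collapses to an immediate appeal to the acyclicity of $N$.
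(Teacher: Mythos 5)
Your proof is correct in substance but follows a genuinely different route from the paper's. For the nontrivial direction the paper does not touch the underlying tree at all: from $d(x,y)\neq\odot$ for all pairs it deduces that $X$ is a clique in $\mathcal A(N)$, invokes Lemma~\ref{lm-clique} (via the absence of $3$-alternating cycles from Proposition~\ref{pr-arbalt}) to get a single vertex that is an ancestor of all of $X$, and then uses Proposition~\ref{pr-arbalt} again to rule out hybrid vertices, so that this vertex is the unique root. You instead argue the contrapositive by elementary tree topology: a hybrid vertex $h$ separates the underlying tree $T$ into components, its parent-components each contain a leaf of $N$, and two such leaves cannot share an ancestor. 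Your approach is self-contained (it reproves from scratch what the paper gets from Lemma~\ref{lm-clique}) at the cost of length; the paper's is shorter given its machinery. Two points in your write-up deserve tightening. First, the inference ``$T_i$ has at least two leaves, at most one of which is $p_i$, so $T_i\cap X\neq\emptyset$'' silently uses that every leaf of $T_i$ other than $p_i$ is a leaf of $T$; a leaf of the subtree $T_i$ could a priori be a vertex adjacent to $h$ with degree $2$ in $T$. This is rescued by the observation that each component of $T-h$ contains exactly one neighbour of $h$ (two such neighbours would close a cycle in $T$), namely $p_i$ for $T_i$ --- state this explicitly. Second, the case analysis on $a'$ that you flag as the main obstacle can be bypassed entirely: a directed path in $N$ is a path in the tree $T$, hence is the unique $T$-path between its endpoints, and the unique $x_1$--$x_2$ path in $T$ passes through $h$ and is contained in the union of the unique $a$--$x_1$ and $a$--$x_2$ paths; so $h$ lies on $P_1$ or $P_2$, and your trapping property then forces the corresponding $x_i$ into a child-component of $h$, contradicting $x_i\in T_i$. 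With those two repairs the argument is complete.
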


\begin{proof}
	Set $d=d_{(N,t)}$.
		Note that since $N$ is arboreal, it must be connected.
	
	Suppose first that $d(x,y) \in M$, for all $\{x,y\}\in {X\choose 2}$. Then any two 
	leaves of $N$ share an ancestor. 
	Thus, $X$ is a clique in $\mathcal A(N)$. Since $N$ is arboreal and so 
		cannot contain a 3-alternating cycle in view of Proposition~\ref{pr-arbalt}, it follows by
		Lemma~\ref{lm-clique}  that
	$N$ contains a vertex 
	$v$ that is an ancestor of all elements of $X$. Using Proposition~\ref{pr-arbalt} again, 
		it follows that, $N$ cannot contain a hybrid vertex. Hence,
	$v$ is necessarily the only root of $N$. Thus, $N$ is a phylogenetic tree on $X$.
	
	Conversely, suppose that $N$ is a phylogenetic tree on $N$. Then any two leaves of $N$ 
	share an ancestor, so $d(x,y) \in M$ for all $\{x,y\} \in {X \choose 2}$.
\end{proof}
 
Now, suppose that $d$ is a symbolic map on $X$.
Let $G_{d}$ be the graph with vertex set $X$, such that $\{x,y\} \in {X\choose 2}$
are joined by an edge if and only if $d(x,y) \neq \odot$.  We next present a key link between 
	the graph $G_d$ associated to a symbolic map $d$ on $X$ and the shared ancestry graph of a network on $X$.

\begin{lemma}\label{lm-du}
Let $(N,t)$ be a labelled arboreal network on $X$.
Then $G_{d_{(N,t)}}$ and $\mathcal A(N)$ are isomorphic and that isomorphism is the identity on $X$.
\end{lemma}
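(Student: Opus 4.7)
The plan is to observe that $G_{d_{(N,t)}}$ and $\mathcal A(N)$ are both graphs with vertex set $X$, so proving that the identity map $\mathrm{id}_X$ is an isomorphism reduces to checking that the two edge sets coincide. Hence I would argue, for an arbitrary pair $\{x,y\} \in {X \choose 2}$, that $\{x,y\}$ is an edge in $G_{d_{(N,t)}}$ if and only if $x$ and $y$ share an ancestor in $N$, since by definition the latter is precisely what characterizes edges of $\mathcal A(N)$.

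To carry this out, I would unfold the definition of $d_{(N,t)}$. If $x$ and $y$ do not share an ancestor in $N$, then $d_{(N,t)}(x,y) = \odot$, and so $\{x,y\}$ is not an edge in $G_{d_{(N,t)}}$. Conversely, if $x$ and $y$ do share an ancestor in $N$, then by Proposition~\ref{pr-lcau} (which applies since $N$ is arboreal and therefore contains no $2$-alternating cycle by Proposition~\ref{pr-arbalt}) the vertex $\mathrm{lca}_N(x,y)$ is well defined, and moreover it lies in $V(N)^-$ because it has two distinct children leading to $x$ and $y$, so that $t(\mathrm{lca}_N(x,y))$ is defined and belongs to $M$. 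Since $\odot \not\in M$, we conclude $d_{(N,t)}(x,y) = t(\mathrm{lca}_N(x,y)) \neq \odot$, so $\{x,y\}$ is an edge in $G_{d_{(N,t)}}$.

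There is no real obstacle in this proof: the statement is essentially a bookkeeping check that matches the definitions of $d_{(N,t)}$ and $\mathcal A(N)$ against each other. The only mildly delicate point is ensuring that $\mathrm{lca}_N(x,y)$ is well defined when $x$ and $y$ share an ancestor, which is precisely where the arboreal hypothesis is used via Proposition~\ref{pr-lcau}, and that this vertex belongs to the domain $V(N)^-$ of $t$, which follows from its having at least two children.
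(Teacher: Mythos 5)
Your proof is correct and follows essentially the same route as the paper's: both reduce the claim to checking that the edge sets of $G_{d_{(N,t)}}$ and $\mathcal A(N)$ coincide by unfolding the definition of $d_{(N,t)}$. The extra care you take in verifying that $\mathrm{lca}_N(x,y)$ is well defined and lies in $V(N)^-$ is sound but not needed beyond what the paper already establishes when defining $d_{(N,t)}$.
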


\begin{proof}
Put $d=d_{(N,t)}$ and recall that $X$ is the vertex set of both $G_d$ and $\mathcal A(N)$. Let $x,y\in X$ distinct.
By definition, $\{x,y\}$ is an arc of $\mathcal A(N)$ if and only if $x$ and $y$ 
share an ancestor in $N$. Since, by definition, $N$ explains $d$, $x$ and $y$ 
share an ancestor in $N$ if and only if $d(x,y) \neq \odot$, that is, if and only if $\{x,y\}$ is an 
edge of $G_d$.
\end{proof}

Before presenting the main result of this section (Theorem~\ref{thm-diss}), we recall some facts concerning symbolic ultrametrics including the 3- and 4-point conditions stated in the introduction.
Suppose that $d:{X\choose 2}\to M^{\odot}$ is a symbolic map.
We say that three pairwise distinct elements $x,y,z \in X$ are in \emph{$\Delta$-relation} (\emph{under $d$}) if $|\{d(x,y),$ $d(x,z),d(y,z)\}|=3$ 
and $\odot \notin \{d(x,y),d(x,z),$ $d(y,z)\}$. We also say that four pairwise 
	distinct elements $x,y,z,u \in X$ are in \emph{$\Pi$-relation} 
(\emph{under $d$}) if, up to permutation of the elements $x,y,z,u$, $d(x,y)=d(y,z)=d(z,u) \neq d(z,x)=d(x,u)=d(u,y)$ 
and $\odot \notin \{d(x,y),d(x,z)\}$. 
These relations naturally arise when explaining symbolic maps in terms of 
	phylogenetic trees (see e.\,g.,\cite{BD98,G84,G00}). Bearing in mind that every symbolic map $d: {X \choose 2} \to M^{\odot}$ 
		can be extended to a 
		map $d':X\times X\to (M\cup \{0\})^{\odot}$ by putting $d'(x,y)=d(x,y)$ if $x\not=y$ and 
		$d'(x,y)=0$ if $x=y$, Theorem 7.2.5 in \cite{SS03} implies:

\begin{theorem}\label{pr-deltapi}
Suppose that $d: {X \choose 2} \to M^{\odot}$ is a symbolic map. 
Then there exists a labelled phylogenetic tree $(T,t)$ on $X$ explaining $d$
	if and only if no three pairwise distinct
elements of $X$ are in $\Delta$-relation under $d$
and also no four pairwise distinct elements of $X$ are in  
$\Pi$-relation under $d$.
\end{theorem}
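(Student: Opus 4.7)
The plan is to regard Theorem~\ref{pr-deltapi} as the classical characterization of symbolic ultrametrics due to Gurvich~\cite{G84} and B\"ocker--Dress~\cite{BD98}, adapted to the slightly more permissive setting with the symbol $\odot$. The reduction is essentially automatic in one direction because any phylogenetic tree assigns every pair of leaves a unique least common ancestor (Lemma~\ref{lm-dt}), so whenever $d$ is explained by $(T,t)$ we must have $\odot \notin d({X \choose 2})$; the extra $\odot$-clauses in the definitions of $\Delta$- and $\Pi$-relation are then vacuous, and we may work throughout with a map $d:{X \choose 2}\to M$, on which I would invoke the classical result (via the standard extension $d'$ to $X\times X$ described immediately before the statement).

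For the forward implication I would exploit the ``two-out-of-three'' property of least common ancestors in a tree: for any three leaves $x,y,z$ of $T$, exactly two of the three vertices $\mathrm{lca}_T(x,y), \mathrm{lca}_T(x,z), \mathrm{lca}_T(y,z)$ coincide (they both equal $\mathrm{lca}_T(\{x,y,z\})$) while the third is a strict descendant. Consequently $|\{d(x,y),d(x,z),d(y,z)\}|\le 2$, ruling out $\Delta$-triples. For $\Pi$-relations I would enumerate the possible quartet topologies on any four leaves and check that the cyclic pattern $d(x,y)=d(y,z)=d(z,u)\neq d(z,x)=d(x,u)=d(u,y)$ is incompatible with every one of them, since in each topology the three pairs whose least common ancestor is the ``top'' vertex must share one common label.

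The backward implication is a constructive induction on $|X|$, with the base case $|X|=2$ trivial. For $|X|\ge 3$ I would select a candidate root label $m_0\in M$ and define a relation $x\sim y$ iff $x=y$ or $d(x,y)\neq m_0$. The plan is to show that $\sim$ is an equivalence relation with at least two blocks $X_1,\ldots,X_k$, apply the induction hypothesis to the restrictions $d|_{X_i}$ (which inherit the 3- and 4-point conditions automatically), and glue the resulting labelled phylogenetic trees together under a new root labelled $m_0$.

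The main obstacle is the transitivity of $\sim$ together with a good choice of $m_0$. Transitivity is exactly where both conditions pull their weight: if $d(x,y),d(y,z)\neq m_0$ but $d(x,z)=m_0$, the sub-case $d(x,y)\neq d(y,z)$ is an immediate $\Delta$-triple, while in the sub-case $d(x,y)=d(y,z)=m'\neq m_0$ I would extend by a fourth element supplied by a judicious choice of $m_0$ to extract a $\Pi$-quadruple. Selecting $m_0$ so that this refutation succeeds and the partition into $\sim$-blocks has at least two parts (typically by taking $m_0$ to be a symbol appearing ``often enough'' among the $d$-values) is the delicate combinatorial step, and is precisely the technical heart of the classical proof in~\cite{BD98,G84} that underlies the stated theorem.
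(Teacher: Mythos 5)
The paper does not prove this statement from first principles: it extends $d$ to a map on $X\times X$ and invokes Theorem~7.2.5 of \cite{SS03}, i.e.\ the classical Gurvich/B\"ocker--Dress characterization of symbolic ultrametrics. Your plan is the same reduction, padded out with a sketch of the classical proof itself. That sketch is sound in outline: the forward direction rests on the fact that for any three leaves of a tree at least two of the three pairwise least common ancestors coincide (your phrasing ``exactly two \ldots\ while the third is a strict descendant'' is slightly off, since all three may coincide, but the conclusion $|\{d(x,y),d(x,z),d(y,z)\}|\le 2$ survives), and the backward induction via the relation $x\sim y \Leftrightarrow x=y \text{ or } d(x,y)\neq m_0$ is precisely the B\"ocker--Dress argument. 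You explicitly defer the choice of $m_0$ and the transitivity of $\sim$ to \cite{BD98,G84}; since the paper defers the entire proof to the literature, this is not a loss relative to the paper.

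One genuine caution. The assertion that one ``may work throughout with a map $d:{X\choose 2}\to M$'' is justified only in the forward direction (via Lemma~\ref{lm-dt}). In the backward direction the hypotheses do not force $\odot\notin d\bigl({X \choose 2}\bigr)$: for instance $|X|=2$ with $d\equiv\odot$, or $d(x,y)=\odot$ and $d(x,z)=d(y,z)=a\in M$ on a three-element set, satisfy both the $\Delta$- and $\Pi$-conditions vacuously (their definitions exclude triples and quadruples involving $\odot$), yet no labelled phylogenetic tree explains such a $d$ because its labels lie in $M$. So the ``if'' direction, as literally stated, needs the extra hypothesis $\odot\notin d\bigl({X \choose 2}\bigr)$, or else the $\Delta$/$\Pi$-conditions must be read over the symbol set $M^{\odot}$ without the $\odot$-exclusion clauses. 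This is arguably a defect of the theorem statement rather than of your argument --- the paper only ever applies the result to maps with values in $M$, e.g.\ the maps $d_v$ in the proof of Theorem~\ref{thm-diss} --- but your write-up should either add that hypothesis or treat the $\odot$-valued case explicitly rather than declaring the clauses vacuous.
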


We now use this result to characterize symbolic maps that can be explained by a labelled arboreal network:

\begin{theorem}\label{thm-diss}
Suppose that $d: {X \choose 2} \to M^{\odot}$ is a symbolic map. Then, 
$d$ is a symbolic arboreal map if and only if the following four properties all hold:
\begin{itemize}
\item[(A1)] $G_d$ is connected and Ptolemaic.
\item[(A2)] No three pairwise distinct elements of $X$ are in $\Delta$-relation under $d$.
\item[(A3)] No four pairwise distinct elements of $X$ are in $\Pi$-relation under $d$.
\item[(A4)] If $x,y,z,u \in X$ are pairwise distinct and are such that $d(z,u)= \odot$ and $d$ maps all other elements of ${\{x,y,z,u\}\choose 2}$ to an element of $M$, then $d(x,z)=d(y,z)$ and $d(x,u)=d(y,u)$ hold.
\end{itemize}
\end{theorem}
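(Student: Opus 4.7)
The plan is to prove both directions of the equivalence, with the converse carrying the substantive work.

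For the forward direction, suppose $d = d_{(N,t)}$ for a labelled arboreal network $(N,t)$. Property (A1) is immediate from Lemma~\ref{lm-du}, Proposition~\ref{pr-ptol}, and the connectedness of $\mathcal A(N)$ (which follows from that of $N$). For (A2) and (A3) I would restrict $N$ to the three or four elements in question; the restriction is still arboreal, and since all pertinent $d$-values lie in $M$, Lemma~\ref{lm-dt} forces this restriction to be a phylogenetic tree, so Theorem~\ref{pr-deltapi} rules out $\Delta$- and $\Pi$-relations. For (A4), I would apply Lemma~\ref{lm-clique} to the cliques $\{x,y,z\}$ and $\{x,y,u\}$ in $\mathcal A(N)=G_d$ to obtain common ancestors $v_{xyz}$ and $v_{xyu}$ in $N$, and then combine the uniqueness of least common ancestors given by Proposition~\ref{pr-lcau} with the absence of $2$-alternating cycles to conclude that $\mathrm{lca}_N(x,z) = \mathrm{lca}_N(y,z)$ and $\mathrm{lca}_N(x,u) = \mathrm{lca}_N(y,u)$; applying $t$ to both sides yields the two equalities required by (A4).

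For the converse, assume (A1)--(A4). By (A1) and Theorem~\ref{thm-wide}, the arboreal network $N := N(K(G_d))$ represents $G_d$, so it remains to construct a labelling $t\colon V(N)^- \to M$ with $d_{(N,t)}=d$. My approach is local-to-global: for each maximal clique $K \in K(G_d)$, the restriction $d|_{{K \choose 2}}$ maps into $M$, and (A2), (A3), together with Theorem~\ref{pr-deltapi}, yield a labelled phylogenetic tree $(T_K, t_K)$ explaining $d|_K$. For each $v \in V(N)^-$, I would set $t(v) := t_K(\mathrm{lca}_{T_K}(x,y))$ for a suitable maximal clique $K \supseteq C(v)$ and a suitable pair $x,y \in C(v)$ whose $T_K$-lca corresponds to $v$ under the construction of $N(K(G_d))$.

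The main obstacle will be verifying that $t$ is well defined independently of the choices involved and that $d_{(N,t)}=d$. Well-definedness across distinct maximal cliques $K, K'$ both containing $C(v)$ is precisely where (A4) is used: by maximality of $K$ and $K'$ (so they are incomparable under inclusion), we may pick $z \in K \setminus K'$ and $u \in K' \setminus K$; if every such pair satisfied $d(z,u)\neq\odot$ then $K \cup K'$ would be a clique in $G_d$ strictly containing $K$, contradicting maximality, so $z, u$ can be chosen with $d(z,u)=\odot$. Since $x, y \in K \cap K'$, the remaining five pairs in $\{x,y,z,u\}$ all have $d$-value in $M$, so (A4) gives $d(x,z) = d(y,z)$ and $d(x,u) = d(y,u)$, which pins down the label assigned to $v$ from $T_K$ and from $T_{K'}$ to a common value. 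Once $t$ is shown to be well defined, the identity $d_{(N,t)} = d$ splits into the case $d(x,y) \in M$ (handled by Proposition~\ref{pr-lcau} together with the identification of $\mathrm{lca}_N(x,y)$ with a vertex of some $T_K$) and the case $d(x,y) = \odot$ (automatic because $N$ represents $G_d$, so $x$ and $y$ share no ancestor in $N$).
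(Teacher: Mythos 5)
Your forward direction is essentially the paper's argument (the paper passes to the rooted subtree $T_v$ below a common ancestor of the clique rather than to the restriction of $N$, which avoids having to check that suppressing degree-two vertices preserves lca labels, but the substance is the same, including the use of (A4) via the coincidence of the two hybrid vertices forced by the absence of $2$-alternating cycles). The converse, however, has a genuine gap: you fix the network as $N := N(K(G_d))$ and attempt only to construct a labelling $t\colon V(N)^-\to M$, but this network is in general too coarse to explain $d$. Concretely, take $X=\{a,b,c\}$ with $d(a,b)=\bullet$ and $d(a,c)=d(b,c)=\circ$. Then (A1)--(A4) all hold ($G_d$ is the complete graph on three vertices, (A2) is not violated since only two values occur, and (A3), (A4) are vacuous), yet $K(G_d)=\{X\}$, so $N(K(G_d))$ is the star tree with a single internal vertex, and no labelling of that one vertex can produce two distinct values. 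Your phrase ``a suitable pair $x,y\in C(v)$ whose $T_K$-lca corresponds to $v$'' presupposes a correspondence between the internal vertices of $T_K$ and those of $N(K(G_d))$ below the root $K$; but $T_K$ generally has strictly more internal vertices, so two pairs with the same lca in $N(K(G_d))$ may have different lcas, and different $d$-values, in $T_K$, and $t$ cannot be made consistent.

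What is missing is a refinement step: the network itself must be modified, not merely labelled. The paper starts from an arboreal representation $\widehat N$ of $G_d$ (normalised so that every child of an outdegree-$\geq 2$ vertex is a hybrid vertex or a leaf), and for each such vertex $v$ defines a quotient symbolic map $d_v$ on the set $C_v$ of children of $v$ by $d_v(v_1,v_2)=d(x_1,x_2)$ for leaves $x_i$ below $v_i$; Property (A4) is exactly what makes $d_v$ independent of the choice of $x_1,x_2$, and Theorem~\ref{pr-deltapi} then supplies a labelled phylogenetic tree $(T_v,t_v)$ on $C_v$ which is spliced into the network in place of the outgoing arcs of $v$. Your use of (A4) to reconcile labels across two maximal cliques $K,K'$ is a correct local observation (and the existence of $z\in K\setminus K'$ and $u\in K'\setminus K$ with $d(z,u)=\odot$ does follow from maximality as you argue), but it addresses a secondary consistency issue and cannot substitute for the refinement of the network.
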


\begin{proof} It is straight-forward to check that the theorem holds is
	$|X|\in\{2,3\}$ since  Properties~(A3) and (A4) 
	vacuously hold in case $|X|\leq 3$ and Property~(A2) vacuously holds in case $|X|=2$. So assume that $|X|\geq 4$ since
Suppose first that $d$ is a symbolic arboreal map, that is, there exists a labelled arboreal network $(N,t)$ 
explaining $d$. By Lemma~\ref{lm-du}, there exists an isomorphism between $G_d$ 
and $\mathcal A(N)$ that is the identity on $X$.
In particular, $G_d$ must be connected as $\mathcal A(N)$ is connected. Since, by 
Theorem~\ref{thm-wide}, $G_d$ is Ptolemaic  it follows that Property~(A1) holds.

We now show that Property~(A2) holds. As part of this, we remark that the 
	proof of Property~(A3) uses analogous arguments 
on subsets of $X$ of size 4.
Let $x,y,z$ be three pairwise distinct elements of $X$. If $\odot \in \{d(x,y),d(x,z),d(y,z)\}$, 
then since $(N,t)$ explains $d$, it follows that $x,y,z$ are not in $\Delta$-relation. 
So assume that $\odot \not\in \{d(x,y),d(x,z),d(y,z)\}$. Then
$\{x,y,z\}$ is a clique in $G_d$. 
By Lemma~\ref{lm-clique}, there exists a vertex $v$ in $N$ that 
is an ancestor of $x,y$ and $z$. Since $N$ is arboreal, it cannot contain 
a 3-alternating cycle by Proposition~\ref{pr-arbalt}. Let $T_v$ 
be the subtree of $N$ rooted at $v$. Note that $T_v$ must exist as $N$ is 
	arboreal and so cannot contain a $1$-alternating cycle by Proposition~\ref{lm-rh1}. For $t_v$ the 
restriction of $t$ to $V(T_v)$, it follows that the labelled phylogenetic tree
$(T_v,t_v)$ explains $d|_{L(T_v)}$. Property~(A2) then follows from Theorem~\ref{pr-deltapi}. 

To see that Property~(A4) holds, let $x,y,z,u \in X$ be pairwise distinct such that $d(z,u)= \odot$ and that all other elements in ${\{u,x,y,z\}\choose 2}$ are mapped to some element in 
$M$ under $d$. By Lemma~\ref{lm-clique}, there exists 
vertices $v$ and $w$ that are ancestors of the leaves in $\{x,y,z\}$ and $\{x,y,u\}$ 
respectively, and no vertex in $N$ is an ancestor of all four of $x,y,z,u$. 
In particular, $v$ and $w$ do not share an ancestor in $N$
as otherwise that ancestor would also be an ancestor of $u$ and $z$ which is impossible.
Since both $v$ and $w$ are ancestors of the leaves $x$ and $y$ in $N$, 
there exists a hybrid vertex $h_x$ that is common 
to the directed paths from $v$ to $x$ and from $w$ to $x$. 
Similarly, there exists a hybrid vertex $h_y$ that is common 
	to the directed paths from $w$ to $x$ and from $w$ to $y$.
Without loss of generality, we may assume that neither $h_x$ nor $h_y$ 
has an ancestor enjoying this property. 

We first remark that $h_x$ is an ancestor of $\mathrm{lca}_N(x,y)$. To see this, 
	it suffices to show that $h_x=h_y$. 
Assume for contradiction that $h_x \neq h_y$. 
By choice of $h_x$ and $h_y$, these two vertices are incomparable in $N$. 
Hence, $v,h_x,w,h_y$ is a $2$-alternating cycle in $N$, a contradiction 
in view of Proposition~\ref{pr-arbalt} as $N$ is arboreal. Thus, $h_x=h_y$ and, so,
$h_x$ is an ancestor of $\mathrm{lca}_N(x,y)$. 

Clearly, $h_x$ is not an 
ancestor of $z$, as otherwise $w$ is an ancestor of $z$. 
Similarly, $h_x$ is not an ancestor of $u$, as otherwise $v$ is an ancestor of $u$.
So we must have $\mathrm{lca}_N(x,z)=\mathrm{lca}_N(y,z)$ and $\mathrm{lca}_N(x,u)=\mathrm{lca}_N(y,u)$. 
Since $(N,t)$ explains $d$, it follows that $d(x,z)=d(y,z)$ and $d(x,u)=d(y,u)$ hold. 
This concludes the proof of Property~(A4).

Conversely, suppose that $d$ satisfies Properties~(A1)--(A4). We next construct a 
labelled arboreal network $(N,t)$ that explains $d$. To help illustrate our construction, 
	we refer the reader to Figure~\ref{work-ex} for an example. 

Since $G_d$ is connected and 
Ptolemaic, Theorem~\ref{thm-wide} implies that there exists an arboreal network $\widehat N$ 
on $X$ such that $\widehat N$ represents $G_d$. Without loss of generality, we may 
assume that $\widehat N$ does not contain an arc $(u,v)$ such that $u$ has outdegree $2$ 
or more and $v$ is a non-leaf tree-vertex, 
since contracting such arcs preserves $\mathcal A(\widehat N)$
(see Figure~\ref{work-ex}~(ii)) and so we could take the resulting network to be $\hat{N}$.
By construction, we have for any two distinct elements $x$ and $y$ in $X$ 
	that $x$ and $y$ share an ancestor in $\widehat N$ if and only if $d(x,y) \neq \odot$.

To obtain a labelled arboreal network from $\widehat{N}$ that explains $d$, let $v$ 
be a 
vertex of $\widehat N$ of outdegree $2$ or more. By assumption on $\widehat N$, the children 
of $v$ are either hybrid vertices of $\widehat N$ or leaves of $\widehat N$. We first 
claim that if $h$ is a child of $v$ that is a hybrid vertex, and $z$ is a descendant of $v$ 
that is not a descendant of $h$, then $d(x,z)=d(y,z)$ holds for all leaves $x,y$ below $h$ in $\widehat{N}$. To 
see this, let $x$ and $y$ be leaves of $\widehat N$ that are below $h$. Let
$v'$ be a tree vertex that is an ancestor of $h$ but not of $v$, and let $u$ be a 
leaf that is a descendant of $v'$ but not of $h$. Note that such a leaf must exist as $N$ is 
	arboreal and so cannot contain a 1-alternating cycle by Proposition~\ref{pr-arbalt}.
By choice of $x,y,z,u$, there is exactly one 
element in  ${\{x,y,z,u\}\choose 2}$ that is mapped to $\odot$ under $d$, that is, the element $\{z,u\}$.
By Property~(A4), $d(x,z)=d(y,z)$ holds, as claimed.

In view of this claim, we can ``locally replace" $v$ with a tree-structure as follows. 
Let $C_v$ be the set of children of $v$ in $\widehat N$. By assumption on $v$, we have $|C_v|\geq 2$.
For $v_1,v_2 \in C_v$ distinct, we define a symbolic map $d_v:{C_v\choose 2}\to M^{\odot}$ by putting
$d_v(v_1,v_2)=d(x_1,x_2)$ 
for some leaves $x_1$ and $x_2$ below $v_1$ and $v_2$, respectively. 
The fact that all non-leaf children of $v$ are hybrid vertices together with 
the previous claim imply that the definition of $d_v(v_1,v_2)$ 
does not depend on the choices of $x_1$ and $x_2$. Moreover, $d_v(v_1,v_2) \neq \odot$
for all $v_1,v_2 \in C_v$. Since Properties~(A2) and (A3) hold by the definition of $d_v$, it follows by 
Theorem~\ref{pr-deltapi} that there exists a labelled phylogenetic tree $(T_v,t_v)$ on $C_v$ that explains $d_v$ (see Figure~\ref{work-ex}(iii)). We can then modify $\widehat N$ at $v$ into an arboreal network $N_v$ on $X$
by (i) removing all outgoing arcs of $v$ in $\widehat{N}$, (ii) identifying $v$ 
with the root of $T_v$ and (iii) identifying each vertex $w \in C_v$ in 
with the corresponding leaf of $T_v$. Note that $N_v$ might be $\widehat{N}$. By construction, we have for all
leaves $x$ and $y$ below $v$ that $\mathrm{lca}_{N_v}(x,y)$ is a vertex of $T_v$ and that $t_v(\mathrm{lca}_{N_v}(x,y))=d(x,y)$.

Now, let $N$ be the network obtained by applying the above process to all non-leaf vertices 
of $\widehat N$ of outdegree $2$ or more (see Figure~\ref{work-ex}(iv)). By construction, for all vertices $w$ of $N$ of outdegree $2$ or more, 
there exists exactly one vertex $v$ of $\widehat N$ such that $w\in V(T_v)$. 
	Taken together, the maps $t_v$ induce a natural labelling map $t:V(N)^-\to M$.

It remains to show that $(N,t)$ explains $d$, that is, for all $\{x,y\}\in {X\choose 2}$ we have that  $d(x,y)=\odot$ if $x$ 
and $y$ do not share an ancestor in $N$, and $d(x,y)=t(\mathrm{lca}_{N}(x,y))$ otherwise.
To see this, let $x,y$ be two elements of $X$. If $d(x,y)=\odot$, then, as mentioned before, $x$ and $y$ 
do not share an ancestor in $\widehat N$. By construction, that property still holds in $N$. 
If $d(x,y) \neq \odot$, then $x$ and $y$ share an ancestor in $\widehat N$. 
Let $v$ be the least common ancestor of $x$ and $y$ in $\widehat N$. 
Then for $(T_v,t_v)$ the labelled phylogenetic tree obtained by replacing $v$ in the construction of $N_v$ from $\widehat{N}$, it 
follows in view of our observations concerning $N_v $ that $\mathrm{lca}_{N}(x,y)$ is a vertex of $T_v$
and that $t_v(\mathrm{lca}_{N}(x,y))=d(x,y)$. Since, by definition, $t(w)=t_v(w)$ for 
all internal vertices $w$ of $T_v$, we have $t(\mathrm{lca}_{N}(x,y))=d(x,y)$ as desired. 
Hence, $(N,t)$ explains $d$.
\end{proof}

\begin{figure}[h]
		\begin{center}
			\includegraphics[scale=0.7]{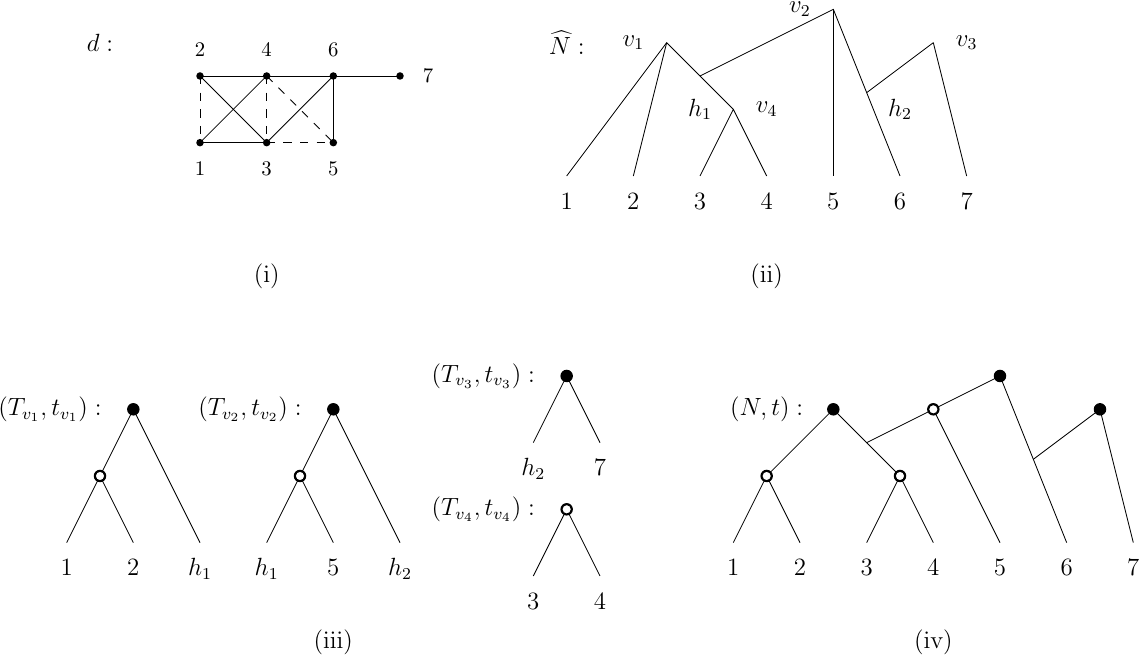}
		\end{center}
		\caption{(i) For  $X=\{1, \ldots, 7\}$, 
		a symbolic map $d:{X\choose 2}\to \{\bullet, \circ,\odot\}$ represented in terms of an edge labelled graph. For $x,y\in X$ distinct, there is
			an edge $\{x,y\}$ in that graph that is solid if $d(x,y)=\bullet$ and dashed if 
			$d(x,y)=\circ$. If
			there is no edge between $x$ and $y$ then $d(x,y)=\odot$. In particular, $G_d$ is the depicted graph, where the edge styles 
			are ignored. Using the notation from the proof of Theorem~\ref{thm-diss}, (ii) 
			presents the arboreal network $\widehat N$ for $G_d$ 
			in which no arc joins a vertex with outdegree $2$ or more with a non-leaf tree-vertex. (iii) For all internal tree-vertices $v_i$ of $\widehat N$, a labelled phylogenetic tree  $(T_{v_i},t_{v_i})$ on the set $C_{v_i}$ of children of $v_i$ that explains $d_{v_i}$. (iv) 
			The labelled arboreal network $(N,t)$ that explains $d$ obtained by replacing each 
			internal vertex $v_i$ of outdegree $2$ or more in $\widehat N$ by $(T_{v_i},t_{v_i})$.}
		\label{work-ex}
\end{figure}
	
We conclude this section by stating a uniqueness result. We say that two networks $N$ and  $N'$ on $X$ are \emph{isomorphic} if there exists a digraph isomorphism from $V(N)$ to $V(N')$ that is the identity on $X$. In \cite[Theorem 2]{BD98} it is shown that 
for any symbolic ultrametric $d$ there is a unique (up to isomorphism) labelled tree $(T,t)$ which explains $d$ 
which has the property that $t(u)\neq t(v)$ for any {\em internal arc} $(u,v)$ in $T$ (i.e. an arc that does not contain a leaf). 
In a similar vein, we say that a labelled arboreal network $(N,t)$ is \emph{discriminating} if $N$ 
has no internal arc $(u,v)$ such that $u$ has outdegree $1$, and no 
internal arc $(u,v)$ such that $v$ has indegree $1$ and $t(u)=t(v)$.
Then we have the following result:

\begin{theorem}\label{cor-discr}
	Let $d: {X \choose 2} \to M^{\odot}$ be a symbolic arboreal map. Then there exists a unique (up to isomorphism that is the identity on X) discriminating arboreal network $(N,t)$ on $X$ that explains $d$.
\end{theorem}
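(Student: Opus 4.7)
My plan is to prove existence and uniqueness separately, starting from a labelled arboreal network $(N_0, t_0)$ explaining $d$ guaranteed by Theorem~\ref{thm-diss}. For existence, I would apply two reduction operations that preserve both arboreality and the induced symbolic map while strictly decreasing $|V(N_0)|$: (a) if there is an internal arc $(u,v)$ with $u$ of outdegree $1$, contract $(u,v)$, transferring the label of $v$ (if defined) to the merged vertex; (b) if there is an internal arc $(u,v)$ with $v$ of indegree $1$ and $t(u)=t(v)$, contract $(u,v)$ and keep the common label. Since contracting an arc of a tree yields a tree, arboreality is preserved, and the induced symbolic map is unchanged because the least common ancestor of every pair of leaves that share an ancestor still exists in the contracted network and carries the same label. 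Because each contraction strictly reduces $|V|$, the process terminates, and the terminal labelled network is discriminating by construction.

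For uniqueness, suppose $(N,t)$ and $(N',t')$ are two discriminating labelled arboreal networks explaining $d$. By Lemma~\ref{lm-du}, $\mathcal A(N) = \mathcal A(N') = G_d$, which is Ptolemaic by Theorem~\ref{thm-diss}. My strategy is to identify the ``root and hybrid skeleton'' of each network with the canonical network $N(K(G_d))$ from Theorem~\ref{thm-unirep}, and then apply the uniqueness of symbolic-ultrametric representations (the tree version of the result underlying Theorem~\ref{pr-deltapi}, due to B\"ocker and Dress) to the labelled phylogenetic tree sitting above this skeleton at each root. I would begin by establishing a bijection $r \mapsto C(r)$ between $R(N)$ and $K(G_d)$: the inclusion $C(r) \subseteq Y$ for some $Y \in K(G_d)$ is clear since $C(r)$ is a clique in $G_d$, and equality should follow from Lemma~\ref{lm-clique} together with discrimination (a proper containment $C(r) \subsetneq Y$ would yield either an internal arc eligible for contraction or an alternating cycle forbidden by Proposition~\ref{pr-arbalt}). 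Extending this to the hybrid vertices via the intersection structure of $K(G_d)$ then recovers the cover-digraph skeleton of $N(K(G_d))$ inside both $N$ and $N'$. Within each root's local tree of tree-vertices, the restriction of $d$ is a symbolic ultrametric, and the discriminating condition restricted to that subtree is exactly the canonical-form hypothesis of the B\"ocker-Dress uniqueness result, giving uniqueness locally. Combining the skeleton correspondence with the local tree uniqueness yields the required digraph isomorphism that is the identity on $X$.

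The main obstacle will be the skeleton correspondence: showing that in any discriminating $(N,t)$ explaining $d$, every maximal clique of $G_d$ arises as $C(r)$ for exactly one root $r$, and that the proper intersections of maximal cliques are matched to hybrid vertices, requires controlling how the hybrid vertices of $N$ connect the roots. I expect this to rely on a careful combinatorial argument interleaving Proposition~\ref{pr-arbalt} (to rule out alternating cycles when matching incomparable vertices), Lemma~\ref{lm-clique} (to translate between cliques in $G_d$ and common ancestors in $N$), and the two discriminating conditions (to eliminate redundant hybrid chains or same-label adjacent vertices that would otherwise produce non-isomorphic representations).
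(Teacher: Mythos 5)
Your plan is sound and would, if fleshed out, prove the theorem, but it takes a genuinely different route from the paper on the uniqueness half. The paper does not split the network into a root/hybrid skeleton plus local trees handled by the B\"ocker--Dress uniqueness theorem; instead it introduces the notion of a \emph{strong clique-module} of $d$ and proves (Theorem~\ref{th-discr}, via Propositions~\ref{pr-foset} and~\ref{pr-cm2} and Lemmas~\ref{lm-merge} and~\ref{lm-inj}) that $(N,t)$ is discriminating if and only if $v\mapsto C(v)$ is a bijection from $V(N)-X$ onto $\mathcal C(K(G_d))\cup\mathcal M(d)$ --- a set canonically determined by $d$ alone --- after which arcs are recovered from containments among the sets $C(v)$ and labels from values of $d$. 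In effect the paper internalises the B\"ocker--Dress canonical form into the clique-module machinery rather than invoking it as a black box, which buys a single uniform argument covering roots, hybrid vertices and the intermediate tree-vertices at once. Your modular approach buys reuse of the known tree result, but to make it work you must (i) show that the induced symbolic map on the children-blocks of each skeleton vertex is well defined from $d$ alone (this is where the clique-module property, i.e.\ Property~(A4), enters), (ii) check that the two discriminating conditions localise exactly to the B\"ocker--Dress canonical-form hypothesis on each local tree, and (iii) note that the local trees hang below hybrid vertices as well as below roots, so ``the tree sitting above the skeleton at each root'' slightly misdescribes the geometry. Your existence argument by contraction is exactly the paper's, and the tools you cite for the skeleton correspondence (Lemma~\ref{lm-clique}, Proposition~\ref{pr-arbalt}, and an injectivity argument ruling out two vertices with the same leaf-descendant set) are precisely those the paper deploys; the containment $C(r)\subsetneq Y$ is excluded not by exhibiting a contractible arc but by producing, via Lemma~\ref{lm-merge}(ii), a second vertex with the same $C$-set, contradicting Lemma~\ref{lm-inj}.
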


Note that if $N$ is a phylogenetic tree, then $N$ has no internal arc $(u,v)$ such that $u$ has outdegree $1$, so Theorem~\ref{cor-discr} is a generalization of the aforementioned uniqueness result for symbolic ultrametrics. As our proof for this result is somewhat long and technical we shall present it in the Appendix.
 
\section{Discussion}\label{sec-disc}

In this paper, we have characterized symbolic maps that can be explained by a labelled arboreal network.
To do this, we introduced the concept of the shared ancestry graph of a network, and 
then exploited the connection between such graphs and Ptolemaic graphs for arboreal networks.

It would be interesting to understand 
how far our results might be extended to other classes of networks or symbolic maps. 
For example, as mentioned in the introduction, results have recently appeared on connections between symbolic maps
and so-called level-1 phylogenetic networks \cite{HS18}, and so one might investigate if similar 
results can be derived in the setting where networks are permitted to have multiple roots.
In addition, there are connections between ultrametrics, edge-labelled hypergraphs and 
symbolic 3-way maps \cite{G00, HMS19} that might potentially yield interesting generalizations 
within the arboreal setting. And, finally, it could be worth investigating how
properties of symbolic arboreal maps vary with different choices of symbol set $M$; for example, 
in case $M$ is taken to be a group (see e.g. \cite{semple1999tree}).

In another direction, note that since a Ptolemaic graph can be recognized in linear time \cite{UU09}, as a corollary of 
Theorem~\ref{thm-diss} we immediately
obtain the following observation.

\begin{corollary}\label{cor:poly}
	A symbolic arboreal map on a set $X$ can be recognized in $O(|X|^4)$ time.
\end{corollary}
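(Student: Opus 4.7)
The plan is to verify each of the four conditions (A1)--(A4) in Theorem~\ref{thm-diss} directly from the input $d:\binom{X}{2}\to M^{\odot}$, and bound the total running time by $O(|X|^4)$. Writing $n=|X|$, I first observe that the input has size $\binom{n}{2}=O(n^2)$, and building the auxiliary graph $G_d$ from $d$ (i.e.\ listing the pairs $\{x,y\}$ with $d(x,y)\ne\odot$) also takes $O(n^2)$ time.

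For (A1), testing connectivity of $G_d$ is standard in $O(|V(G_d)|+|E(G_d)|)=O(n^2)$ time via breadth-first search. Recognizing whether $G_d$ is Ptolemaic is done with the linear-time algorithm of Uehara and Uno \cite{UU09}; linear here means linear in the size of the graph, and since $G_d$ has at most $\binom{n}{2}$ edges, this step also runs in $O(n^2)$ time.

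For (A2), I would enumerate all unordered triples $\{x,y,z\}\subseteq X$ and check in $O(1)$ per triple whether $|\{d(x,y),d(x,z),d(y,z)\}|=3$ and $\odot\notin\{d(x,y),d(x,z),d(y,z)\}$; this costs $O(n^3)$ in total. For (A3), I would enumerate all unordered $4$-subsets $\{x,y,z,u\}\subseteq X$ and check in $O(1)$ whether, for some permutation of the four elements, the $\Pi$-relation holds; since there are only $4!=24$ permutations to try per $4$-subset, this takes $O(n^4)$. For (A4), I would again enumerate all $4$-subsets and, for each element $\{z,u\}$ of the $4$-subset that is mapped to $\odot$ while the other five pairs are mapped into $M$, verify in $O(1)$ the equalities $d(x,z)=d(y,z)$ and $d(x,u)=d(y,u)$; this also runs in $O(n^4)$.

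Combining the above, the total running time is dominated by the checks of (A3) and (A4), giving $O(n^4)$. By Theorem~\ref{thm-diss}, $d$ is a symbolic arboreal map precisely when all four tests succeed, so this constitutes a recognition procedure of the claimed complexity. There is no real obstacle here beyond invoking the linear-time Ptolemaic recognition of \cite{UU09}; the remaining work is a straightforward enumeration over triples and quadruples of $X$.
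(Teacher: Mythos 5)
Your proposal is correct and is exactly the argument the paper intends: the corollary is stated as an immediate consequence of Theorem~\ref{thm-diss} together with the linear-time Ptolemaic recognition of \cite{UU09}, and your brute-force enumeration of triples and quadruples for (A2)--(A4) supplies the routine details the paper leaves implicit. No discrepancy.
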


It would be interesting to
know if there is an algorithm for recognizing symbolic arboreal 
maps that has a better run-time than $O(|X|^4)$. 
Also for applications, it would be useful to develop an efficient algorithm for constructing a labelled arboreal 
network that explains a symbolic arboreal map. 
Such an algorithm is implicitly given in the proof of Theorem~\ref{thm-diss}, in 
which we describe the ``vertex-replacement" operation, which constructs a 
representation of $d$ from some $\widehat N$. For example, we can 
always choose $\widehat N$ to be $N(K(G_d))$, which we know how to construct from $K(G_d)$. 
Note that \cite[Theorem 8]{UU09} shows how to construct a 
directed clique laminar tree associated to a Ptolemaic graph in linear time 
might also be useful for developing algorithms for symbolic arboreal maps.

\bibliographystyle{siamplain}
\bibliography{bibliography}

\begin{thebibliography}{10}

\bibitem{BD98}
{\sc S.~B{\"o}cker and A.~W. Dress}, {\em Recovering symbolically dated, rooted
  trees from symbolic ultrametrics}, Advances in Mathematics, 138 (1998),
  pp.~105--125.

\bibitem{bruckmann2022modular}
{\sc C.~Bruckmann, P.~F. Stadler, and M.~Hellmuth}, {\em From modular
  decomposition trees to rooted median graphs}, Discrete Applied Mathematics,
  310 (2022), pp.~1--9.

\bibitem{geiss2020best}
{\sc M.~Gei{\ss}, M.~E.~G. Laffitte, A.~L. S{\'a}nchez, D.~I. Valdivia,
  M.~Hellmuth, M.~H. Rosales, and P.~F. Stadler}, {\em Best match graphs and
  reconciliation of gene trees with species trees}, Journal of Mathematical
  Biology, 80 (2020), pp.~1459--1495.

\bibitem{G84}
{\sc V.~Gurvich}, {\em Some properties and applications of complete
  edge-chromatic graphs and hypergraphs}, in Soviet math. dokl, vol.~30, 1984,
  pp.~803--807.

\bibitem{G00}
{\sc V.~Gurvich}, {\em Decomposing complete edge-chromatic graphs and
  hypergraphs. revisited}, Discrete Applied Mathematics, 157 (2009),
  pp.~3069--3085.

\bibitem{H21}
{\sc M.~Hayamizu}, {\em A structure theorem for rooted binary phylogenetic
  networks and its implications for tree-based networks}, SIAM Journal on
  Discrete Mathematics, 35 (2021), pp.~2490--2516.

\bibitem{HHHMSW13}
{\sc M.~Hellmuth, M.~Hernandez-Rosales, K.~T. Huber, V.~Moulton, P.~F. Stadler,
  and N.~Wieseke}, {\em Orthology relations, symbolic ultrametrics, and
  cographs}, Journal of Mathematical Biology, 66 (2013), pp.~399--420.

\bibitem{hellmuth2017mathematics}
{\sc M.~Hellmuth, P.~F. Stadler, and N.~Wieseke}, {\em The mathematics of
  xenology: Di-cographs, symbolic ultrametrics, 2-structures and
  tree-representable systems of binary relations}, Journal of Mathematical
  Biology, 75 (2017), pp.~199--237.

\bibitem{howorka1981characterization}
{\sc E.~Howorka}, {\em A characterization of ptolemaic graphs}, Journal of
  Graph Theory, 5 (1981), pp.~323--331.

\bibitem{H81}
{\sc E.~Howorka}, {\em A characterization of ptolemaic graphs}, Journal of
  Graph Theory, 5 (1981), pp.~323--331.

\bibitem{HMS19}
{\sc K.~T. Huber, V.~Moulton, and G.~E. Scholz}, {\em Three-way symbolic
  tree-maps and ultrametrics}, Journal of Classification, 36 (2019),
  pp.~513--540.

\bibitem{huber2022forest}
{\sc K.~T. Huber, V.~Moulton, and G.~E. Scholz}, {\em Forest-based networks},
  Bulletin of Mathematical Biology, 84 (2022), p.~119.

\bibitem{HS18}
{\sc K.~T. Huber and G.~E. Scholz}, {\em Beyond representing orthology
  relations by trees}, Algorithmica, 80 (2018), pp.~73--103.

\bibitem{semple1999tree}
{\sc C.~Semple and M.~Steel}, {\em Tree representations of non-symmetric
  group-valued proximities}, Advances in Applied Mathematics, 23 (1999),
  pp.~300--321.

\bibitem{SS03}
{\sc C.~Semple and M.~Steel}, {\em Phylogenetics}, vol.~24, Oxford University
  Press on Demand, 2003.

\bibitem{S16}
{\sc M.~Steel}, {\em Phylogeny: discrete and random processes in evolution},
  SIAM, 2016.

\bibitem{UU09}
{\sc R.~Uehara and Y.~Uno}, {\em Laminar structure of ptolemaic graphs with
  applications}, Discrete Applied Mathematics, 157 (2009), pp.~1533--1543.

\bibitem{Z16}
{\sc L.~Zhang}, {\em On tree-based phylogenetic networks}, Journal of
  Computational Biology, 23 (2016), pp.~553--565.

\end{thebibliography}

\newpage 

\section{Appendix}

In this appendix, we prove Theorem~\ref{cor-discr}. To do this
we shall first consider properties of the sets $C(v)$ for $v$ a vertex of an arboreal network $N$, 
and then show that, for a labelled arboreal network $(N,t)$, we can recover the
sets $C(v)$ from the map $d_{(N,t)}$ which permits us to prove uniqueness. 
We begin with a result which  underlines the key role played  
by the elements in $\mathcal C(K(G))$ in case $G$ is the 
shared ancestry graph for an arboreal network $N$.

\begin{proposition}\label{pr-foset}
Let $N$ be an arboreal network and let $G=\mathcal A(N)$. For all $Z \in \mathcal C(K(G))$, there 
exists a vertex $v$ of $N$ such that $C(v)=Z$.
\end{proposition}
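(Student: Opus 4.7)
The plan is to prove the proposition by a case analysis on $|Z|$. The case $|Z|=1$ is trivial: if $Z=\{x\}$ then $x \in X$ is a leaf of $N$ with $C(x) = \{x\} = Z$, so we take $v = x$. For $|Z| \geq 2$, note that $Z$ is an intersection of cliques of $G = \mathcal A(N)$ and hence itself a clique of $G$. Since $N$ is arboreal, Proposition~\ref{pr-arbalt} guarantees that $N$ has no $3$-alternating cycle, so Lemma~\ref{lm-clique} provides at least one vertex of $N$ that is a common ancestor of all leaves in $Z$. Among such vertices I would choose $v$ \emph{minimally}, in the sense that no strict descendant of $v$ is also a common ancestor of $Z$; such a $v$ exists because $N$ is finite. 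The claim is that $C(v) = Z$.

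Since $v$ is a common ancestor of $Z$, the inclusion $Z \subseteq C(v)$ is immediate. For the converse, suppose for contradiction that there exists $z \in C(v) \setminus Z$. Because $Z = \bigcap_{Y \in S} Y$, some $Y_0 \in S$ satisfies $z \notin Y_0$. Applying Lemma~\ref{lm-clique} to the maximal clique $Y_0$ produces a vertex $v_0 \in V(N)$ with $Y_0 \subseteq C(v_0)$; since $C(v_0)$ is itself a clique of $G$ (its elements share the ancestor $v_0$) and $Y_0$ is maximal, $C(v_0) = Y_0$. Thus $v_0$ is a common ancestor of $Z \subseteq Y_0$ but not of $z$. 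I now distinguish three cases according to the relation of $v$ and $v_0$ in $N$. If $v_0$ is an ancestor of $v$ (in particular if $v_0 = v$), then $C(v) \subseteq C(v_0) = Y_0$, forcing $z \in Y_0$, a contradiction. If $v_0$ is a strict descendant of $v$, then $v_0$ is itself a strict descendant of $v$ that remains a common ancestor of $Z$, contradicting the minimality of $v$.

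The remaining case, where $v$ and $v_0$ are incomparable in $N$, is the main obstacle and requires the arboreal structure. Since the underlying graph of $N$ is a tree, there is a unique path $P$ between $v$ and $v_0$ in it. Because $v$ and $v_0$ share a common descendant in $N$ (any $z \in Z$), I would argue by analysing arc directions along $P$ that $P$ must take the form $v \to \cdots \to m \gets \cdots \gets v_0$ for a unique vertex $m$: if the ``descent'' from $v$ and the ``descent'' from $v_0$ stopped at different vertices of $P$, then either the two resulting forward-reachable segments would be disjoint (ruling out a common descendant of $v$ and $v_0$), or the intermediate arcs would extend one of the forward reaches beyond its claimed stopping point, a contradiction. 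Moreover, every common descendant of $v,v_0$---in particular every $z \in Z$---either equals $m$ or lies in a subtree hanging off $P$ at $m$, so $m$ is an ancestor of every $z \in Z$. Since $P$ has length at least two in the incomparable case and $m \neq v$ (else $v$ would be a descendant of $v_0$), the vertex $m$ is a strict descendant of $v$, contradicting the minimality of $v$ and completing the proof. The technical heart of the argument is thus the verification that in this incomparable case $P$ has the descent--ascent form described, and that the meeting point $m$ is both a strict descendant of $v$ and an ancestor of all of $Z$.
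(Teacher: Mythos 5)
Your proof is correct, but the way you handle the hard inclusion $C(v)\subseteq Z$ is genuinely different from the paper's. Both proofs start identically: invoke Lemma~\ref{lm-clique} (legitimate since, by Proposition~\ref{pr-arbalt}, an arboreal network has no $3$-alternating cycle) to get a common ancestor of $Z$, take it minimal, and note $Z\subseteq C(v)$; and your observation that a maximal clique $Y_0$ satisfies $C(v_0)=Y_0$ for the vertex $v_0$ supplied by Lemma~\ref{lm-clique} is exactly the paper's argument for the case $Z\in K(G)$. For the remaining case the paper stays combinatorial: it proves a ``propagation'' claim (any $y\notin C(v)$ sharing an ancestor with all of $Z$ shares an ancestor with all of $C(v)$) by exhibiting $2$- and $3$-alternating cycles, and then uses maximality of every clique in $K_Z$ to force $C(v)\subseteq\bigcap_{Y\in K_Z}Y=Z$. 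You instead pick a single witness $Y_0\in S$ with $z\notin Y_0$, realise it as $C(v_0)$, and compare $v$ with $v_0$ directly in the underlying tree; the only nontrivial case ($v,v_0$ incomparable) is killed by your meeting-point argument: the median of $v$, $v_0$ and any common descendant is the unique orientation-switch vertex $m$ on the tree path from $v$ to $v_0$, which is a strict descendant of $v$ and an ancestor of every element of $Z$, contradicting minimality. This is sound (the median is independent of the chosen descendant because the switch point of the path is unique), it unifies the cases $Z\in K(G)$ and $Z\notin K(G)$, and it is in effect a clean, slightly more general re-derivation of Lemma~\ref{lm-merge}(ii) from the appendix -- your version needs only a common descendant of two incomparable vertices rather than the containment $C(v)\subseteq C(u)$ that that lemma assumes. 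What the paper's route buys is consistency with its running ``no $k$-alternating cycle'' toolkit; what yours buys is a shorter, more geometric argument that leans directly on the underlying graph being a tree. The one presentational gap is that the descent--ascent analysis of the path $P$ is only sketched; to make it airtight you should state and prove the meeting-point claim explicitly (e.g.\ via the tree median of $v$, $v_0$ and a common descendant, as you indicate), since it is the technical heart of your case (iii).
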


\begin{proof}
To ease notation, set $K=K(G)$.
Let $Z \in \mathcal C(K)$. The proposition holds if $|Z|=1$ since then $Z=C(x)$ for some $x\in X$. 
So assume for the remainder that $|Z|\geq 2$. We distinguish between the cases that $Z \in K$ and that $Z \notin K$.

Suppose first that $Z \in K$. Since $N$ is arboreal and so cannot contain a 3-alternating cycle by Proposition~\ref{pr-arbalt},  Lemma~\ref{lm-clique} implies that there exists a vertex $v_Z$ of $N$ such that $Z \subseteq C(v_Z)$. 
Let $x \in C(v_Z)$. Since $x$ and $z$ share an ancestor 
for all $z \in Z$, it follows that $Z \cup \{x\}$ is a clique in $G$. By maximality of $Z$
it follows that $x \in Z$. Hence, $C(v_Z) \subseteq Z$. Thus $C(v_Z) = Z$, which completes the proof of the proposition
in case $Z \in K$. 

So, suppose $Z \notin K$. Let $K_Z=\{Y \in K\,|\, Z \subset Y\}$. 
Note that since $Z \in \mathcal C(K)-K$, we have $|K_Z| \geq 2$ 
and $Z=\bigcap_{Y \in K_Z} Y$. By Lemma~\ref{lm-clique}, it follows 
that there exists a vertex $v_Z$ of $N$ such that $Z \subseteq C(v_Z)$. 
Without loss of generality, we can choose $v_Z$ such that no 
strict descendant of $v_Z$ satisfies this property.
We now show that $C(v_Z) \subseteq Z$ must also hold, which 
implies that $Z = C(v_Z)$ and thus completes the proof of the proposition.

We first claim that if $y \in X-C(v_Z)$ is such that $y$ and $z$ share an ancestor 
in $N$ for all elements $z \in Z$, then for all $x \in C(v_Z)$, $x$ and $y$ share an ancestor in $N$.

To see that the claim holds, suppose for contradiction that there 
exists $y \in X-C(v_Z)$ and $x \in C(v_Z)$ such that $y$ and $z$ share an 
ancestor in $N$ for all elements $z \in Z$ but $x$ and $y$ do not share an ancestor in $N$. 
By choice of $v_Z$, there exists two elements $z_1, z_2 \in Z$ distinct such that $z_1$ and $z_2$ 
are descendant of two distinct children $v_1$ and $v_2$ of $v_Z$, respectively. 
Indeed, if this is not the case, then all elements of $Z$ are descendant of the same child $v'$ of $v_Z$, 
which contradicts our choice of $v_Z$. 

Now, let $w_1=\mathrm{lca}(z_1,y)$ and $w_2=\mathrm{lca}(z_2,y)$. 
Since $x$ and $y$ do not share an ancestor in $N$, $v_Z$ is incomparable 
with $w_1$ and $w_2$. For $i \in \{1,2\}$, let $h_i$ be the last vertex common 
to the paths from $w_i$ to $z_i$ and from $v_Z$ to $z_i$. Since $w_i$ and $v_Z$ 
are incomparable in $N$, $h_i$ is a (not necessarily strict) descendant of $v_i$. 
In particular, $w_i$ and $h_i$ are distinct. We conclude the proof of the claim
by considering two possible cases: $w_1$ and $w_2$ are incomparable in $N$, or one is an ancestor of the other. 

If $w_1$ and $w_2$ are incomparable in $N$, then $w_1,h_1,v_Z,h_2,w_2,h_y$ is a $3$-alternating cycle of $N$, 
where $h_y$ is the last vertex common to the directed paths from $w_1$ to $y$ and from $w_2$ to $y$. 
In view of Proposition~\ref{pr-arbalt} his is impossible since $N$ is arboreal. If one of $w_1, w_2$ is an ancestor of the other, say $w_1$ 
is an ancestor of $w_2$ in $N$, then $w_1$ is an ancestor of $h_2$ in $N$, and $w_1,h_1,v_Z,h_2$ is a $2$-alternating cycle of $N$. 
Then the same argument as before shows that this  is impossible. This concludes the proof of the claim.

Now by the claim it follows that for all $x \in C(v_Z)$ and all $Y \in K_Z$, $x$ shares an ancestor with all elements of $Y$. 
Hence $Y \cup \{x\}$ is a clique in $G$ for all such $Y$. Since for all such $Y$, we have that  $Y\in K$, it follows that $x \in Y$. Thus
 $C(v_Z) \subseteq Y$ for all $Y \in K_Z$, and so $C(v_Z) \subseteq \bigcap_{Y \in K_Z} Y=Z$.
\end{proof}

We now prove two useful lemmas which provide more information 
concerning the sets $C(v)$ for $v$ a vertex in an arboreal network.	

\begin{lemma}\label{lm-merge}
Let $N$ be an arboreal network  and let $u,v \in V(N)$ distinct. Then the following hold:
\begin{itemize}
\item[(i)] If $u$ is an ancestor of $v$ in $N$, then $u$ has exactly one child that is an ancestor of $v$. 
Moreover, all other children $u'$ of $u$ satisfy $C(u') \cap C(v)=\emptyset$.
\item[(ii)] If $C(v) \subseteq C(u)$ and $u$ and $v$ are incomparable in $N$, 
then there exists a non-leaf descendant $h$ of both $u$ and $v$ satisfying $C(h)=C(v)$.
\end{itemize}
\end{lemma}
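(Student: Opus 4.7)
The plan is to exploit Proposition~\ref{pr-arbalt}: $N$ is arboreal if and only if it contains no $k$-alternating cycle, equivalently, if and only if the underlying undirected graph $U(N)$ of $N$ is a tree. For part~(i), I would assume that $u$ is an ancestor of $v$ with $u\neq v$ and argue via the tree property of $U(N)$. If two distinct children $w_1, w_2$ of $u$ were both ancestors of $v$, then $U(N)$ would contain two distinct simple paths from $u$ to $v$, one starting with the edge $\{u, w_1\}$ and the other with $\{u, w_2\}$, contradicting $U(N)$ being a tree. Letting $w$ denote the unique child of $u$ that is an ancestor of $v$, the second claim follows similarly: if a different child $u'$ of $u$ had $x \in C(u') \cap C(v)$ for some $x \in X$, then $U(N)$ would contain two distinct simple paths from $u$ to $x$, one via $u-u'-\cdots-x$ and one via $u-w-\cdots-v-\cdots-x$ (both are simple, since directed walks in a DAG cannot repeat vertices), again contradicting the tree property.

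For part~(ii), the plan is to define $H$ to be the set of all common descendants of $u$ and $v$ whose strict ancestors are not themselves common descendants of $u$ and $v$. Since $C(v) \subseteq C(u)$ and $C(v)\neq\emptyset$, the set $H$ is non-empty. The main step, and the principal obstacle, is to establish that $|H|=1$. I would suppose for contradiction that $h_1, h_2 \in H$ are distinct; by maximality they are necessarily incomparable. Moreover, any directed path from $u$ to $h_i$ and any directed path from $v$ to $h_i$ must be internally vertex-disjoint, since any shared internal vertex would be a strict ancestor of $h_i$ that is a common descendant of $u$ and $v$, contradicting $h_i \in H$. It then follows that $h_1, h_2$ are hybrid vertices and the sequence $u, h_1, v, h_2$ is a $2$-alternating cycle in $N$, contradicting arborealness by Proposition~\ref{pr-arbalt}.

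Writing $H=\{h\}$, I would then conclude $C(h)=C(v)$ as follows. The inclusion $C(h) \subseteq C(v)$ holds because $h$ descends from $v$, and conversely every $x \in C(v) \subseteq C(u)$ is a common descendant of $u$ and $v$ and hence must be a descendant of the unique maximal common descendant $h$, giving $C(v) \subseteq C(h)$. Finally, to see that $h$ is non-leaf: were $h$ a leaf, its unique parent $p$ would lie on every directed path from $u$ to $h$ and on every directed path from $v$ to $h$, and since $u, v$ are incomparable $p$ can equal neither of them; hence $p$ would be a strict-ancestor common descendant of $u$ and $v$, once more contradicting $h \in H$.
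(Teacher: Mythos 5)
Your proposal is correct and follows essentially the same route as the paper: part~(ii) rests on exactly the same $2$-alternating-cycle contradiction (the paper picks, for each leaf $z\in C(v)$, a topmost common descendant $h_z$ of $u$ and $v$ and shows these all coincide, which is precisely your claim that $|H|=1$). In part~(i) the paper phrases the contradiction as a $1$-alternating cycle rather than as two distinct simple $u$--$x$ paths in the underlying tree, but by Proposition~\ref{pr-arbalt} these are the same obstruction, so the difference is cosmetic.
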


\begin{proof}
(i) To see the first part of the statement, suppose for contradiction that $u$ has two 
distinct children $u_1,u_2$ that are both ancestors of $v$. Then there exists a vertex $h$ in $N$
that is an ancestor of $v$, and is a descendant of both $u_1$ and $u_2$. 
Choosing $h$ in such a way that no strict ancestor of $h$ is a descendant of both $u_1$ and $u_2$, it follows
that $u,h$ is a 1-alternating cycle of $N$. In view of Proposition~\ref{pr-arbalt}, 
this is impossible since $N$ is arboreal. Hence, $u$ has exactly one child that is an ancestor of $v$.

To see the second part of the statement, let $u'$ be a child of $u$ that is not an ancestor of $v$, and 
let $x \in C(u')$. If $x \in C(v)$, then $x$ is a descendant of both $u'$ and $v$ in $N$.
Hence, there exists a vertex $h$ that is an ancestor of $x$ in $N$, and a descendant of both $u'$ and $v$. 
Choosing $h$ in such a way that no strict ancestor of $h$ is a descendant of both $u'$ and $v$, 
it follows that $u,h$ is a 1-alternating cycle of $N$.  Since $N$ is arboreal this is impossible in
view of Proposition~\ref{pr-arbalt}.  Hence, $C(u') \cap C(v)=\emptyset$.

(ii) Since $u$ and $v$ are incomparable, for all $z \in C(v)$, there exists a vertex $h_z$ that 
is an ancestor of $z$, and a descendant of $u$ (since $C(v)\subseteq C(u)$) and $v$.
Without loss of generality, we can choose $h_z$ in such a way that no strict ancestor of $h_z$ 
is a descendant of both $u$ and $v$.  Note that $h_z$ must be a hybrid vertex of $N$. In particular, 
it cannot be a leaf of $N$.  

We claim that  $C(h_z)=C(v)$, for any $z\in C(v)$. To see this, assume for contradiction that 
there exists $x,y \in C(v)$ distinct such that $h_x \neq h_y$. 
Then $u,h_x,v,h_y$ is a 2-alternating cycle of $N$ which is impossible in view of Proposition~\ref{pr-arbalt} as $N$ is arboreal. 
Hence, $h_x=h_y$, for all $x,y\in C(v)$. Choose some $x\in C(v)$. Then $C(v) \subseteq C(h_x)$ by 
the previous argument. Moreover, since $h_x$ is a descendant of $v$, we also 
have $C(h_x) \subseteq C(v)$ which completes the proof of the claim and also the proof of the lemma.
\end{proof}

\begin{lemma}\label{lm-inj}
Let $N$ be an arboreal network. If $N$ has no vertex of outdegree $1$ whose unique child 
is a non-leaf vertex then $C(u) \neq C(v)$,  for all internal vertices $u,v$ of $N$ distinct.
\end{lemma}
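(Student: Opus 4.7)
The plan is to argue by contradiction: assume there are distinct internal vertices $u,v$ of $N$ with $C(u)=C(v)$, and split into the case that $u,v$ are comparable and the case that they are incomparable. The main work is the comparable case, which will use Lemma~\ref{lm-merge}(i); the incomparable case will be reduced to it via Lemma~\ref{lm-merge}(ii).

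First I would handle the comparable case. Assume without loss of generality that $u$ is a strict ancestor of $v$, and let $u^{\star}$ be the parent of $v$ on the directed path from $u$ to $v$ (possibly $u^{\star}=u$). Since $v$ is a descendant of $u^{\star}$, which is in turn a descendant of $u$, we have $C(v)\subseteq C(u^{\star})\subseteq C(u)=C(v)$, hence $C(u^{\star})=C(v)$. Applying Lemma~\ref{lm-merge}(i) to the pair $(u^{\star},v)$, the only child of $u^{\star}$ that is an ancestor of $v$ is $v$ itself, and every other child $w$ of $u^{\star}$ satisfies $C(w)\cap C(v)=\emptyset$. But $C(w)\subseteq C(u^{\star})=C(v)$, forcing $C(w)=\emptyset$, which is impossible since $N$ is a finite acyclic digraph and every vertex reaches a leaf along a directed path. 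Hence $u^{\star}$ has outdegree $1$ with unique child $v$. Since $v$ is an internal vertex, $v$ is a non-leaf, contradicting the hypothesis on $N$.

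For the incomparable case, since $C(v)\subseteq C(u)$ and $u,v$ are incomparable in $N$, Lemma~\ref{lm-merge}(ii) yields a non-leaf descendant $h$ of both $u$ and $v$ with $C(h)=C(v)$. Incomparability of $u$ and $v$ rules out $h=u$ or $h=v$, so $h$ is a strict descendant of $v$; being non-leaf, $h$ is internal. Thus $v$ and $h$ are two distinct comparable internal vertices with $C(v)=C(h)$, and the comparable case (which I have already disposed of) delivers the desired contradiction.

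The main obstacle is really just pinpointing the right ``nearest-common-structure'' at which to apply the assumption: in the comparable case one has to look at the parent of $v$ rather than at $u$ itself, and in the incomparable case one has to produce a witness vertex below both $u$ and $v$ before the comparable argument becomes available. Both of these are handed to us by Lemma~\ref{lm-merge}, so beyond invoking those two statements the proof is essentially bookkeeping together with the observation that every vertex of $N$ has at least one leaf descendant.
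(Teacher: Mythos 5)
Your proof is correct and follows essentially the same route as the paper's: reduce the incomparable case to the comparable one via Lemma~\ref{lm-merge}(ii), then combine Lemma~\ref{lm-merge}(i) with the fact that every vertex has a leaf descendant and with the outdegree-$1$ hypothesis to reach a contradiction. The only (immaterial) difference is that you apply Lemma~\ref{lm-merge}(i) at the parent of $v$ to force that parent to have outdegree $1$, whereas the paper first uses the hypothesis to give $u$ outdegree at least $2$ and then applies Lemma~\ref{lm-merge}(i) at $u$ itself.
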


\begin{proof}
Assume for contradiction that there exist internal vertices  $u$  and $v$ in $N$ distinct such that $C(u)=C(v)$. 
Note that we may assume that $u$ and $v$ are such that $u$ is a strict ancestor of $v$ in $N$ (indeed, if $v$ is 
an ancestor of $u$ in $N$, then the roles of $u$ and $v$ can be reversed). 
If $u$ and $v$ are incomparable in $N$, then by Lemma~\ref{lm-merge}(ii), 
there exists a non-leaf vertex $h$ that is a descendant of both $u$ and $v$ in $N$ and 
satisfies $C(h)=C(v)=C(u)$. In this case, $h$ can play the role of $v$.

Since $u$ is a strict ancestor of $v$ in $N$ and $v$ is not a leaf, $u$ 
has outdegree at least $2$. Combined with Lemma~\ref{lm-merge}(i), it follows 
that there exists a child $u'$ of $u$ in $N$ that is not an ancestor of $v$ and 
for which $C(u')\cap C(v)=\emptyset$ holds. However, since $u'$ is a child of $u$, we 
also have $C(u') \subseteq C(u)=C(v)$ which is impossible. Hence, no two such elements $u$ and $v$ can exist.
\end{proof}

Now, recall from  Section~\ref{sc-sym} that a labelled arboreal network $(N,t)$ 
is \emph{discriminating} if $N$ has no internal arc $(u,v)$ such 
that $u$ has outdegree $1$, and no internal arc $(u,v)$ such that $v$ has 
indegree $1$ and $t(u)=t(v)$. This
definition is motivated by the fact that, for $(N,t)$ a labelled arboreal network, 
the labelled arboreal network $(N',t')$ obtained from $N$ by successively
applying the following operations to internal arcs $(u,v)$:
\begin{itemize}
\item[$\bullet$] If $u$ has outdegree 1 then collapse $(u,v)$ into a new vertex $w$. If $v$ had outdegree $2$ or more, put $t'(w)=t(v)$.
\item[$\bullet$] If $v$ has indegree $1$ and $t(u)=t(v)$ then collapse $(u,v)$
	into a new vertex $w$ and put $t'(w)=t(v)$.
\end{itemize}
and putting $t'(v)=t(v)$ for all other vertices $v$ satisfies $d_{(N',t')}=d_{(N,t)}$.
Note that, in a discriminating labelled arboreal network $(N,t)$, a vertex $v$ of $N$ has 
outdegree $2$ or more if and only if $|C(v)| \geq 2$. In particular, the labelling map $t$ assigns an element of $M$ to all such vertices.

We now prove a result which, for a labelled arboreal network $(N,t)$,
relates the sets $C(v)$ for $v$ a vertex in $N$ with properties of the map $d_{(N,t)}$.
First we require some further terminology.
Let $d: {X \choose 2} \to M^{\odot}$ be a symbolic map. We say 
that a non-empty subset $Y$ of $X$ is a \emph{clique-module} of $d$ if $|Y|=1$, or 
if $Y$ is a clique in $G_d$, and for all $x,y \in Y$ and all $z \in X-Y$ we 
have $|\{d(x,z),d(y,z), \odot\}| \leq 2$. Informally speaking, the latter means 
that if both $d(x,z)$ and $d(y,z)$ are elements in $M$ then $d(x,z)=d(y,z)$. We 
say that a clique-module $Y$ is \emph{trivial} if $|Y|=1$, and that 
it is \emph{strong} if for all clique-modules $Y'$ of $d$ such that $Y' \cup Y$ 
is a clique in $G_d$, $Y \cap Y' \in \{Y,Y',\emptyset\}$. Note that trivial 
clique-modules are always strong. We denote by $\mathcal M(d)$ the set of all strong, 
non-trivial clique-modules of $d$. To illustrate these notions, let $X=\{x,y,z,t,u\}$, 
and consider the map $d: {X \choose 2} \to \{\bullet, \circ, \odot\}$ 
defined by $d(x,z)=d(x,t)=d(y,z)=d(y,t)=d(z,t)=\bullet$, $d(x,y)=d(t,u)=\circ$, 
and $d(x,u)=d(y,u)=d(z,u)=\odot$. Then the non-trivial clique-modules of $d$ 
are $\{x,y,z,t\}$, $\{x,y\}$, $\{x,y,z\}$, $\{x,y,t\}$, $\{z,t\}$ and $\{t,u\}$. 
Of these, only $\{x,y,z,t\}$, $\{x,y\}$ and $\{t,u\}$ are strong.

\begin{proposition}\label{pr-cm2}
Let $(N,t)$ be a labelled arboreal network on $X$. For all vertices $v$ of $N$, $C(v)$ 
is a clique-module of $d=d_{(N,t)}$. Moreover, for all $Y \in \mathcal M(d)$, 
there exists a vertex $v$ of $N$ such that $C(v)=Y$.
\end{proposition}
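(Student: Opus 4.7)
The two halves of the statement will be handled separately.

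\emph{Every $C(v)$ is a clique-module.} If $|C(v)|=1$ the condition is immediate from the definition. Otherwise every two elements of $C(v)$ share the ancestor $v$, so by Lemma~\ref{lm-du} the set $C(v)$ is a clique in $G_d$. The module condition then reduces --- via the fact that $(N,t)$ explains $d$ --- to the following claim: for any $x,y\in C(v)$ and $z\in X-C(v)$ with both $x,z$ and $y,z$ sharing an ancestor, one has $w_x:=\mathrm{lca}_N(x,z)=\mathrm{lca}_N(y,z)=:w_y$. Since $z\notin C(v)$, neither $w_x$ nor $w_y$ can equal $v$ or be a descendant of $v$, so each is either a strict ancestor of $v$ or incomparable with $v$. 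I would then run through the resulting cases. Using the fact (deducible from Proposition~\ref{pr-lcau} and a $2$-alternating cycle argument) that in an arboreal network any common ancestor of two leaves lies above their lca, the ``strict ancestor / strict ancestor'' case yields $w_x=w_y$ directly. Every remaining configuration with $w_x\neq w_y$ can be refuted by selecting hybrid vertices minimally witnessing the appropriate incomparabilities and assembling them, together with $v,w_x,w_y$, into a $k$-alternating cycle of $N$ with $k\in\{1,2,3\}$; this contradicts Proposition~\ref{pr-arbalt}. The most delicate step, and the main obstacle, is the case in which $w_x$ and $w_y$ are both incomparable with $v$ and also incomparable with each other: there one must exhibit a full $3$-alternating cycle of the form $v,h_x,w_x,h_z,w_y,h_y$.

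\emph{Every $Y\in\mathcal{M}(d)$ is realised as some $C(v)$.} Since $Y$ is a clique in $G_d=\mathcal{A}(N)$ and $N$ has no $3$-alternating cycle (Proposition~\ref{pr-arbalt}), Lemma~\ref{lm-clique} furnishes a vertex $v\in V(N)$ with $Y\subseteq C(v)$; I choose such a $v$ with $C(v)$ minimal with respect to inclusion. Suppose for contradiction that $C(v)\supsetneq Y$. Applying Lemma~\ref{lm-merge}(i) to $v$ and each leaf in $C(v)$ shows that the sets $C(c)$, $c$ a child of $v$, are pairwise disjoint and partition $C(v)$. By the first half of the proposition each $C(c)$ is a clique-module of $d$, and $Y\cup C(c)\subseteq C(v)$ is a clique of $G_d$; strongness of $Y$ therefore forces $Y\cap C(c)\in\{Y,C(c),\emptyset\}$. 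Minimality of $v$ rules out $Y\cap C(c)=Y$, so $Y=\bigcup_{c\in S}C(c)$ for a set $S$ of children of $v$. Moreover $|S|\geq 2$ (otherwise $Y$ would lie in a single $C(c)$, contradicting minimality) and some other child $c'$ of $v$ satisfies $C(c')\cap Y=\emptyset$.

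The finishing blow is to display a clique-module violating the strongness of $Y$. Pick $c\in S$ and the child $c'$ above, and set $Y_2:=C(c)\cup C(c')$. A direct verification shows $Y_2$ is a clique-module: it lies in $C(v)$ and hence is a clique; for pairs $x,y\in Y_2$ and $w\in (C(v)-Y_2)$, the distinctness of $c,c'$ and of the child of $v$ containing $w$ together with Lemma~\ref{lm-merge}(i) forces $\mathrm{lca}_N(x,w)=\mathrm{lca}_N(y,w)=v$, and for $w\notin C(v)$ the module condition follows at once from the first half of the proposition applied to $C(v)$. Then $Y\cup Y_2\subseteq C(v)$ is a clique and $Y\cap Y_2=C(c)$, which is non-empty (every vertex of $N$ has a leaf descendant) and strictly smaller than both $Y$ (because $|S|\geq 2$) and $Y_2$ (because $C(c')\neq\emptyset$). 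This contradicts the strongness of $Y$ and forces $C(v)=Y$.
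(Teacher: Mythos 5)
Your overall strategy is sound and your second half is a correct (and genuinely different) argument, but the two halves deserve separate comments. For the first half, the paper gets $\mathrm{lca}_N(x,z)=\mathrm{lca}_N(y,z)$ much more directly: since $\{x,y,z\}$ is a clique in $G_d\cong\mathcal A(N)$, Lemma~\ref{lm-clique} gives a common ancestor of all three, the descendant set of that ancestor is a tree (no $1$-alternating cycles), and in a tree the three pairwise least common ancestors of three leaves cannot be pairwise distinct; as $\mathrm{lca}_N(x,y)$ is the only one of the three below $v$, the other two must coincide. Your comparability-with-$v$ case analysis plus alternating-cycle contradictions does reach the same conclusion (I checked that each of your cases, including the $3$-alternating cycle $v,h_x,w_x,h_z,w_y,h_y$, can be made to work), but as written it is a plan rather than a proof: ``I would then run through the resulting cases'' and the unresolved ``main obstacle'' leave the heaviest lifting undone, and the mixed cases (one of $w_x,w_y$ a strict ancestor of $v$, the other incomparable; or both incomparable with $v$ but comparable with each other) also need explicit treatment. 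For the second half, your route differs from the paper's in an interesting way: the paper tests strongness of $Y$ against the single clique-module $C(v)-C(v_1)$ and concludes $C(v)-C(v_i)\subseteq Y$ for two children $v_1,v_2$, whence $C(v)\subseteq Y$; you instead first show $Y$ is a union of children's clusters and then manufacture the clique-module $C(c)\cup C(c')$ to contradict strongness. Both work; the paper's is shorter, yours makes the modular structure of $C(v)$ more explicit.

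One concrete repair is needed in your second half: choosing $v$ with $C(v)$ inclusion-minimal does not rule out $Y\cap C(c)=Y$ when $v$ has outdegree $1$ (a hybrid vertex with a single child $c$ has $C(c)=C(v)$, so minimality of the \emph{set} is not violated), and then ``$C(c)\subsetneq C(v)$'' fails. Choose $v$ as the paper does --- so that no \emph{child} of $v$ has $Y$ contained in its cluster --- and this case disappears, since such a $v$ automatically has at least two children meeting $Y$.
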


\begin{proof}
We begin by proving the first statement in the proposition.
Let $v$ be a vertex of $N$. If $|C(v)|=1$, then $C(v)$ is a trivial clique-module of $d$. 
Hence, we may assume from now on that $|C(v)| \geq 2$.

By definition of $d$, $C(v)$ is a clique in $G_d$. Now, let $x,y \in C(v)$ distinct, 
and let $z \notin C(v)$ such that $\odot \notin \{d(x,z),d(y,z)\}$. Then, 
the vertex $\mathrm{lca}_N(x,y)$ is a descendant of $v$ in $N$, while the vertices 
$\mathrm{lca}_N(x,z)$ and $\mathrm{lca}_N(y,z)$ are not. Since these three least common ancestors  cannot be pairwise distinct, $\mathrm{lca}_N(x,z)=\mathrm{lca}_N(y,z)$, and so $d(x,z)=t(\mathrm{lca}_N(x,z))=t(\mathrm{lca}_N(y,z))=d(y,z)$. 
Hence, $C(v)$ is a clique-module of $d$.

To see that the second statement in the proposition holds, let $Y$ by a strong, non-trivial 
clique-module of $d$. By Lemma~\ref{lm-clique}, there exists a vertex $v$ of $N$ such 
that $Y \subseteq C(v)$. Without loss of generality, we may choose $v$ in such a way 
that no child of $v$ enjoys this property. We now show that $C(v) \subseteq Y$ also holds, 
so that $C(v) =Y$ which concludes the proof of the proposition.

By choice of $v$, there exist two distinct children $v_1,v_2$ of $v$ 
such that $C(v_1) \cap Y \neq \emptyset$ and $C(v_2) \cap Y \neq \emptyset$. 
Note that since $N$ is arboreal, Proposition~\ref{pr-arbalt} implies that $C(v_1) \cap C(v_2)=\emptyset$. 
Now, let $C'=C(v)-C(v_1)$. Since $C'$ is a subset of $C(v)$, $C'$ is a clique in $G_d$. 
We next claim that $C'$ is a clique-module of $d$. Let $x,y \in C'$, $z \notin C'$.  
In view of the first part of the proposition, $C(v)$ is a clique-module of $d$, so if $z \notin C(v)$, 
we have $d(x,z)=d(y,z)$. If $z \in C(v)$, then since $z \notin C'$, we have $z \in C(v_1)$. Hence,
 $\mathrm{lca}(x,z)=\mathrm{lca}(y,z)=v$ and so $d(x,z)=d(y,z)$. Thus, $C'$ is a clique-module of $d$, as claimed.

Since $Y$ is a strong non-trivial clique-module of $d$, we have $C' \cap Y \in \{C',Y,\emptyset\}$. Since
$C(v_1) \cap Y \neq \emptyset$,  we have that $Y \subseteq C'$ does not hold. 
Moreover, since $C(v_2) \cap Y \neq \emptyset$ and $C(v_2) \subseteq C'$ it follows that $Y \cap C' =\emptyset$ 
does not hold either. Hence, $C'=C(v)-C(v_1) \subseteq Y$. Replacing $v_1$ with $v_2$ 
in the latter argument, implies that $C(v)-C(v_2) \subseteq Y$ also holds. 
Thus, $C(v) \subseteq Y$, as required.
\end{proof}

\noindent Putting together the above results, we now prove a key  theorem 
that enables us to prove Theorem~\ref{cor-discr}.

\begin{theorem}\label{th-discr}
Let $(N,t)$ be a labelled arboreal network on $X$ and $d=d_{(N,t)}$. Then the following statements are equivalent:
\begin{itemize}
\item[(i)] $(N,t)$ is discriminating.
\item[(ii)] The map $\phi:V(N)-X \to \mathcal C(K(G_d)) \cup \mathcal M(d)$ given by $\phi(v) =C(v)$, for all $v\in V(N)-X$, 
is a bijection between $V(N)-X$ and $\mathcal C(K(G_d)) \cup \mathcal M(d)$.
\end{itemize}
\end{theorem}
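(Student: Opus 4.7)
The plan is to prove the equivalence by establishing each direction separately, leaning on three structural tools already available: Lemma~\ref{lm-inj} for injectivity of $v \mapsto C(v)$ on internal vertices, Proposition~\ref{pr-foset} for surjectivity onto $\mathcal C(K(G_d))$, and Proposition~\ref{pr-cm2} for surjectivity onto $\mathcal M(d)$ (as well as the clique-module status of each $C(v)$). Throughout, I would identify $G_d$ with $\mathcal A(N)$ via Lemma~\ref{lm-du}, so that these results apply directly to $G_d$. For (i) $\Rightarrow$ (ii), injectivity is the easy step: the first discriminating condition (no internal arc $(u,v)$ with $u$ of outdegree~$1$) exactly ensures that every outdegree-$1$ vertex of $N$ has a leaf as its unique child, which is the hypothesis of Lemma~\ref{lm-inj}. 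Surjectivity follows from Propositions~\ref{pr-foset} and~\ref{pr-cm2}, noting that for $|Z|\geq 2$ the preimage vertex is automatically non-leaf; for singletons $\{x\}\in\mathcal C(K(G_d))$, the connectedness of $G_d$ together with $|X|\geq 2$ forces $\{x\}$ to arise as a proper intersection of distinct maximal cliques, hence $x$ lies below at least two roots of $N$, and a short analysis using the discriminating condition produces a hybrid vertex $h$ of outdegree~$1$ with leaf child $x$, so that $C(h)=\{x\}$.

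Well-definedness of $\phi$ (that the image lies in $\mathcal C(K(G_d))\cup\mathcal M(d)$) is the main obstacle, and I would case-split on the type of non-leaf $v\in V(N)$. For $v$ a root, I would show $C(v)$ is a maximal clique in $G_d$: if $C(v)\cup\{y\}$ were still a clique for some $y\notin C(v)$, then by Lemma~\ref{lm-clique} there is a vertex $w$ of $N$ that is a common ancestor of $C(v)\cup\{y\}$ with $C(v)\subseteq C(w)$, and examining whether $w$ is comparable or incomparable to the root $v$ via Lemma~\ref{lm-merge}(ii) would produce a vertex $w'\neq v$ with $C(w')=C(v)$, contradicting injectivity via Lemma~\ref{lm-inj}. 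For $v$ a hybrid vertex of outdegree~$1$, $C(v)=\{x\}$ by the discriminating condition on the child, and the singleton argument from the previous paragraph places $\{x\}$ in $\mathcal C(K(G_d))$. For every other non-leaf $v$, Proposition~\ref{pr-cm2} gives $C(v)$ as a clique-module, and its strongness would be established by showing that any crossing clique-module $Y'$ forces an extraneous internal vertex with cluster $Y'\cap C(v)$, again contradicting injectivity; a cleaner alternative—which I expect to be the smoother route—is to match $N$ step-by-step with the canonical construction from the proof of Theorem~\ref{thm-diss}, whose internal vertices are tautologically indexed by $\mathcal C(K(G_d))\cup\mathcal M(d)$.

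For (ii) $\Rightarrow$ (i), I would argue the contrapositive. If the first discriminating condition fails, some internal arc $(u,v)$ has $u$ of outdegree~$1$, so $C(u)=C(v)$ and $\phi$ is not injective. If only the second condition fails, there is an internal arc $(u,v)$ with $v$ of indegree~$1$, $u$ of outdegree~$\geq 2$, and $t(u)=t(v)$; contracting this arc (and iterating if further violations appear) yields a labelled arboreal network $(N',t')$ that explains the same $d$ and is discriminating. By the forward direction, $\phi$ for $N'$ is a bijection, so $|V(N')-X|=|\mathcal C(K(G_d))\cup\mathcal M(d)|$; but $|V(N)-X|>|V(N')-X|$ and $\phi$ is still injective on $V(N)-X$ (condition~$1$ still holds, so Lemma~\ref{lm-inj} applies), whence $\phi$ cannot be a bijection onto the smaller codomain—some $C(w)$ with $w\in V(N)-X$ must then fall outside $\mathcal C(K(G_d))\cup\mathcal M(d)$. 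The principal difficulty throughout is the strongness analysis needed to pin down which clusters lie in $\mathcal M(d)$; either a direct combinatorial verification or the canonical-construction route through Theorem~\ref{thm-diss}, combined with the observation that contracting label-redundant arcs preserves $d$, should suffice to close the argument.
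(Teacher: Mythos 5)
Your skeleton for injectivity (Lemma~\ref{lm-inj}) and surjectivity (Propositions~\ref{pr-foset} and~\ref{pr-cm2}) in (i)~$\Rightarrow$~(ii) matches the paper, and your counting argument for (ii)~$\Rightarrow$~(i) (contract the label-redundant arcs, invoke the forward direction on the contracted network, and compare cardinalities) is a legitimate alternative to the paper's direct derivation of $t(u)\neq t(v)$ --- but it only works once the forward direction is complete, and that is where your proposal breaks down. The well-definedness of $\phi$, which you rightly call the main obstacle, is mishandled in two ways. First, your case split sends every non-root, non-leaf vertex other than an outdegree-$1$ hybrid into the ``show $C(v)$ is a strong clique-module'' branch, but this is false for hybrid vertices of outdegree $2$ or more: in the network of Figure~\ref{fig-nk}(ii) with the constant labelling $t\equiv\bullet$, the hybrid vertex $h$ with $C(h)=\{3,4\}$ gives a clique-module that is not strong (the clique-module $\{1,3\}$ crosses it), yet that labelled network is discriminating. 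Such clusters must instead be placed in $\mathcal C(K(G_d))$ as $\bigcap\{Y\in K(G_d)\,:\,C(v)\subseteq Y\}$, which is how the paper routes all vertices of indegree $2$ or more.

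Second, and more fundamentally, for the vertices that genuinely land in $\mathcal M(d)$ --- the non-root internal vertices of indegree $1$ --- strongness of $C(v)$ cannot be deduced from injectivity of $v\mapsto C(v)$ alone, because injectivity is a purely structural property of $N$ while membership of $C(v)$ in $\mathcal M(d)$ depends on the labelling $t$. Take a rooted tree with root $r$, a child $v$ of $r$ having leaf children $x,y$, and a leaf child $z$ of $r$: with $t(r)\neq t(v)$ the set $\{x,y\}=C(v)$ is a strong clique-module, while with $t(r)=t(v)$ it is not (then $\{x,z\}$ becomes a crossing clique-module), yet the clusters, and hence your injectivity argument, are identical in both cases. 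The paper's proof necessarily uses the label condition: it shows that a crossing clique-module $Y$ forces a strict ancestor $w$ of $v$ with $t(w)=t(v)$ and then obtains a contradiction from $t(u)\neq t(v)$ for the unique parent $u$ of $v$. Your fallback of ``matching $N$ step-by-step with the canonical construction from Theorem~\ref{thm-diss}'' is essentially the uniqueness statement of Theorem~\ref{cor-discr} itself, for which Theorem~\ref{th-discr} is the key ingredient, so that route is circular as sketched.
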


\begin{proof}
To ease notation, set $K=K(G_d)$.

(i) $\Rightarrow$ (ii) We first show that, if $v \in V(N)-X$  then (at least) 
one of $C(v) \in \mathcal C(K)$ or $C(v) \in \mathcal M(d)$ must hold.  
By Proposition~\ref{pr-cm2}, $C(v)$ is a clique-module of $d$. If $v$ is a root of $N$, then 
$C(v) \in K \subseteq \mathcal C(K)$ (in fact $C(v) \in \mathcal M(d)$ also holds). 
If $v$ has indegree $2$ or more in $N$, then $C(v)=\bigcap_{C(v) \subset Y \in K}Y$. 
Hence, $C(v) \in \mathcal C(K)$ holds in this case too.

So, suppose $v$ has indegree $1$ in $N$. Then since $v\not\in L(N)$,
the outdegree of $v$ in $N$ must be at least 2. Hence, $v\in V(N)^-$. 
Furthermore, since the unique parent $u$ of $v$ in $N$ cannot be a leaf either,
$(u,v)$ must be an internal arc of $N$.	 Since $(N,t)$ is discriminating it 
follows that the  outdegree of $u$ is at least 2.  Hence,  $u\in V(N)^-$ also holds.

We next claim that $C(v)\in \mathcal M(d)$, that is, $C(v)$ is  a strong clique-module for $d$. 
Suppose for contradiction that $C(v)$ is not a strong clique-module for $d$, that is, 
there exists a clique-module $Y$ of $d$, such that $Y \cup C(v)$ is a clique in $G_d$
and $Y \cap C(v) \notin \{Y,C(v), \emptyset\}$.
Since $N$ is arboreal, $G_d$ and $\mathcal A(N)$ are isomorphic in view of Lemma~\ref{lm-du}. Since $|Y\cup C(v)|\geq 2$,
 Lemma~\ref{lm-clique} implies that there exists a vertex $w$ such that $Y \cup C(v) \subseteq C(w)$.
Without loss of generality, we may choose $w$ in such a way that no strict descendant of $w$ 
has this property. In view of Lemma~\ref{lm-merge}(ii), we may also assume 
that $w$ is an ancestor of $v$. Since $Y \nsubseteq C(v)$ as $C(v)$ is not a 
strong clique-module for $d$, it follows that $w$ is a strict ancestor of $v$. 
In particular, $w$ has outdegree $2$ or more. Thus, $w\in V(N)^-$.

We next show that $w\not=u$ and that $t(w)=t(v)$. To this end, note that by the choice of $w$ there exists $y \in Y$ such that $\mathrm{lca}_N(x,y)=w$ for 
all $x \in C(v)$. Now, let $x \in C(v)$ and $z \in C(v) \cap Y$ such that $x \notin Y$ and $\mathrm{lca}_N(x,z)=v$. 
Note that such an $x$ and $z$ always exists since,	by the choice of $Y$,  there 
always exist some $a \in C(v)-Y$ and $b\in C(v) \cap Y$. If $\mathrm{lca}_N(a,b)=v$ 
then we take $x=a$ and $z=b$. Otherwise, $\mathrm{lca}_N(a,b)$ must be a strict descendant of $v$. In that
case, we can choose some $c\in C(v)$ such that $c$ and $\mathrm{lca}_N(a,b)$
are descendants of different children of $v$. If $c\in Y$ then we can take $z$
to be $c$ and $x$ to be $a$, and if $c\not\in Y$ then we can take $x$
to be $c$ and $z$ to be $b$.
Since $Y$ is a clique-module of $d$  and neither $d(x,y)= \odot$ nor $d(x,z)= \odot$ holds as $x,y,z\in C(v)$,
we obtain $d(x,y)=d(x,z)$. Since $(N,t)$ explains $d$, it follows 
that  $t(w)= d(x,y)=d(x,z)=t(v)$, as required. Since $t(u)\not=t(v)$ because $(N,t)$ is discriminating, $w \neq u$ follows, also as required.

Now, let $p \in C(u)$ with $p \notin C(v)$. Then $\mathrm{lca}_N(x,p)=\mathrm{lca}_N(z,p)=t(u)$. 
If $p \in Y$ held, then $d(x,p)=d(x,z)$ since $Y$ is a clique-module of $d$ and neither 
$d(x,p)\not=\odot$ nor $d(y,p)\not=\odot$ holds. But this
is impossible, since $d(x,p)=t(u)\not=t(v)=d(x,z)$. Hence, $p \notin Y$. 
Similar arguments as in the case that $p\in Y$ imply that  $d(z,p)=d(y,p)$. 
But this is also impossible, since $t(u)\not=t(v)=t(w)=d(y,p)=d(z,p)=t(u)$. 
Thus, $C(v)\in\mathcal M(d)$, as claimed.

It remains to show that the map $\phi$
is bijective. That  $\phi$ is surjective is a direct consequence of 
Propositions~\ref{pr-foset} and~\ref{pr-cm2}. That $\phi$ is 
injective is a direct consequence of Lemma~\ref{lm-inj} since $(N,t)$ is 
discriminating and so $N$ does not contain an internal arc $(u,v)$ such 
that $u$ has outdegree $1$.

(ii) $\Rightarrow$ (i) We first remark that $N$ cannot have an internal arc $(u,v)$ 
such that $u$ has outdegree $1$. Indeed, if $N$ had such an arc, then $C(u)=C(v)$ 
would hold which contradicts the injectivity of $\phi$. To see that $N$ is 
discriminating, we therefore need to show that if $(u,v)$ is an internal arc of $N$ 
such that $v$ has indegree $1$ then $t(u) \neq t(v)$.

So, let $(u,v)$ be an internal arc of $N$ such that $v$ has indegree $1$. 
Since $\phi$ is injective and so  $C(w)\not=C(v)$ holds for all
vertices $w\in V(N)$, it follows that $C(v) \notin \mathcal C(K)$. 
Hence, $C(v) \in \mathcal M(d)$, that is, $C(v)$ is a strong clique-module of $d$. 
Now, let $v'$ be a child of $v$ which exists because $v$ is an internal vertex of $N$. 
Let $Y=C(u)-C(v')$. Note that since $v$ has indegree $1$, $v$ has outdegree $2$ or more. 
In particular, $v'$ is not the only child of $v$. Clearly, $Y$ is a clique in $G_d$. 
Since $C(u) \neq C(v)$, we have $Y \cap C(v)=C(v)-C(v') \notin \{Y,C(v),\emptyset\}$. 
Combined with the fact that $C(v)$ is a strong clique-module of $d$ it follows 
that $Y$ cannot be a clique-module of $d$. Hence, there must exist 
three elements $x_0,y_0 \in Y$, $z_0 \in X-Y$ such that $\odot \notin \{d(x_0,z_0),d(x_0,z_0)\}$ 
and $d(x_0,z_0) \neq d(y_0,z_0)$.

Since, by Proposition~\ref{pr-cm2}, $C(u)$ is a clique-module of $d$, we have 
for all $x,y \in Y \subseteq C(u)$ distinct and all $z \in X-C(u)$, that $|\{d(x,z),d(y,z),\odot\}| \leq 2$.  
Hence, $z_0 \in C(u)-Y=C(v')$. Since,
for all $x,y \in C(v)-C(v')$, we have $\mathrm{lca}_N(x,z)=\mathrm{lca}_N(y,z)=v$, it 
follows that $d(x,z)=d(y,z)=t(v) \neq \odot$. Similar arguments imply 
that, for all $x,y \in C(u)-C(v)$, $\mathrm{lca}_N(x,z)=\mathrm{lca}_N(y,z)=u$. 
Thus, $d(x,z)=d(y,z)=t(u) \neq \odot$ holds too. Hence, we 
must have (up to permutation) $x_0 \in C(u)-C(v)$ and $y_0 \in C(v)-C(v')$. 
In particular, we have $d(x_0,z_0)=t(u)$ and $d(y_0,z_0)=t(v)$. Since $d(x_0,z_0) \neq d(y_0,z_0)$, 
we have $t(u) \neq t(v)$, as required.
\end{proof}

\noindent{\em Proof of Theorem~\ref{cor-discr}.}
In view of Theorem~\ref{th-discr}, for two discriminating labelled arboreal networks $(N,t)$ and $(N',t')$ to
both explain $d$, there must exist a bijection $\psi:V(N) \to V(N')$ that is the identity on $X$
and such that $C(v)=C(\psi(v))$, for all $v \in V(N)$. It therefore suffices to show that 
(a) for all $u,v\in V(N)$ distinct, $(u,v)$ is an arc of $N$ if and only if $(\psi(u),\psi(v))$ 
is an arc of $N'$, and (b) for all internal vertices $v$ of $N$ of outdegree 2 or more, $t(v)=t'(\psi(v))$.

(a) Let $u,v\in V(N)$ distinct. By symmetry, it suffices to show that, if $(u,v)$ is 
an arc of $N$ then $(\psi(u),\psi(v))$ is an arc of $N'$. Clearly, $u$ is an internal 
vertex of $N$ and $C(v) \subseteq C(u)$. If $v$ is also an internal vertex of $N$, 
then Lemma~\ref{lm-inj} together with Lemma~\ref{lm-merge}(ii) imply that $\phi(u)$ 
is an ancestor of $\psi(v)$ in $N'$.
If $v$ is not an internal vertex of $N$, then it must be a leaf of $N$. Hence,
$\psi(v)=v\in C(u)=C(\psi(u))$. Consequently, $\psi(u)$ must also be an ancestor of $\psi(v)$ in this case.
To see that $\psi(u)$ is in fact a parent of $\phi(v)$, suppose for contradiction 
that there is a vertex $w \in V(N)$ distinct from $u$ and $v$ such that $\psi(w)$ 
lies on the directed path from $\phi(u)$ to $\psi(v)$ in $N'$. Combined 
with the definition of $\psi$, it follows that  $C(v) \subsetneq C(w) \subsetneq C(u)$. 
Since $u$ and $w$ cannot be leaves of $N$, Lemma~\ref{lm-inj} and 
Lemma~\ref{lm-merge}(ii) imply that $u$ is an ancestor of $w$ and, 
in case $v$ is not a leaf of $N$ either, that $w$ is an ancestor of $v$ in $N$. 
If $v$ is a leaf then similar arguments as before imply that $w$ is an ancestor of $v$.
Since $(u,v)$ is an arc of $N$, it follows that $u,v$ is a 1-alternating cycle of $N$. But this is impossible 
in view of Proposition~\ref{pr-arbalt} as $N$ is arboreal.
Thus such a vertex $w$ cannot exist and, so, $(\psi(u),\psi(v))$ must be an arc of $N'$.

(b) Assume that $v$ is an internal vertex of $N$ that has outdegree 2 or more. Since $(N,t)$ is discriminating,
$|C(v)| \geq 2$ must hold since otherwise $N$ would have a 1-alternating cycle which
is impossible in view of Proposition~\ref{pr-arbalt} because $N$ is arboreal. 
Hence, $t(v)=d_{(N,t)}(x,y)$ holds for all $x,y \in C(v)$ for which $\mathrm{lca}_N(x,y)=v$, 
and $t'(\psi(v))=d_{(N',t')}(x',y')$ holds for all $x,'y' \in C(v)$ for which $\mathrm{lca}_{N'}(x',y')=\psi(v)$. 
Since, by (a), the map $\psi$ is a graph isomorphism from $N$ to $N'$ that is the identity on $X$,
it follows that if $\mathrm{lca}_N(x,y)=v$, then $\mathrm{lca}_{N'}(x,y)=\psi(v)$. Hence, $t(v)=t'(\psi(v))$.
\qed\\

\end{document}